\theoremstyle{plain}
\newtheorem{thm}{Theorem}[section]
\newtheorem{theorem}[thm]{Theorem}
\newtheorem{corollary}[thm]{Corollary}
\newtheorem{proposition}[thm]{Proposition}
\newtheorem{lemma}[thm]{Lemma}
\theoremstyle{definition}
\newtheorem{dfn}[thm]{Definition}
\newtheorem{example}[thm]{Example}
\newtheorem{setting}[thm]{Setting}
\newtheorem{question}[thm]{Question}
\theoremstyle{remark} %  
  \newtheorem{remark}[thm]{Remark}%
\def\@seccntformat#1{\csname the#1\endcsname.\quad}
\def\@seccntformat#1{\csname the#1\endcsname%
\expandafter\ifx\csname#1\endcsname\subsection.\fi\quad}
\newlength{\barlength}
\newcommand{\minusfill}{$\mathsurround=0pt\mathord- \mkern-6mu
   \cleaders\hbox{$\mkern-3mu \mathord- \mkern-3mu$}\hfill
     \mkern-6mu \mathord-$}
\newcommand{\yokobo}{\hbox to 1.2em{\minusfill}}
\newcommand{\equalfill}{$\mathsurround=0pt\mathord= \mkern-6mu
    \cleaders\hbox{$\mkern-3mu \mathord= \mkern-3mu$}\hfill
       \mkern-6mu \mathord=$}
\newcommand{\Longlongrightarrow}
       {\hbox to 2em{\equalfill$\mkern-3mu\Rightarrow$}}
\newcommand{\Longlongleftarrow}
       {\hbox to 2em{$\Leftarrow\mkern-3mu$\equalfill}}
\newcommand{\tsume}{\kern-.35em}
\newlength{\circlength}
\newcommand{\invHom}[3]{\operatorname{Hom}_{#1}(#2,#3)}
\newcommand{\Kirredrep}[2]{F^{{#1}}({#2})}
\newcommand{\op}{\oplus}
\newcommand{\oi}{\ominus}
\begin{document}
\title[Restriction of $A_{\mathfrak{q}}(\lambda)$ for
 $(GL(n,{\mathbb{R}}), GL(n-1,{\mathbb{R}}))$]
 {How does the restriction of representations change under translations? A story for the general linear groups and the unitary groups }
%{Restriction of $A_{\mathfrak{q}}(\lambda)$ for $(GL(n,{\mathbb{R}}), GL(n-1,{\mathbb{R}}))$
%\\
%{\scriptsize{\textit{Dedicated to Harish-Chandra whose pioneering work was a great inspiration for us}}}
%}
\author{Toshiyuki Kobayashi and Birgit Speh}

%\date{}

\address[Toshiyuki KOBAYASHI]{%
Graduate School of Mathematical Sciences,
The University of Tokyo;
%3-8-1 Komaba, %Meguro, Tokyo, 153-8914, Japan;
French-Japanese Laboratory in Mathematics and its Interactions,
FJ-LMI CNRS IRL2025, %Tokyo,
Japan.}
%\email{toshi@ms.u-tokyo.ac.jp}

\address[Birgit SPEH]{%
Department of Mathematics,
Cornell University, USA.
}
%\email{}
\maketitle

\bigskip
\begin{center}
{{\textit{Dedicated to Harish-Chandra whose pioneering work is a great inspiration for us}}}
\end{center}

\bigskip
\begin{abstract} We present a new approach to symmetry breaking for pairs of real forms of $(GL(n, \mathbb{C}), GL(n-1, \mathbb{C}))$. 
Translation functors are powerful tools for studying families of representations of a single reductive group $G$.
However, when applied to a pair of groups $G \supset G'$, they can significantly alter the nature of symmetry breaking between the representations of $G$ and $G'$, even within the same Weyl chamber of the direct product group $G \times G'$. 

We introduce the concept of \emph{fences for the interleaving pattern}, which provides a refinement of the usual notion of \emph{walls of Weyl chambers}. We then establish a theorem stating that the multiplicity remains constant unless these \emph{fences} are crossed, together with a new general vanishing theorem for symmetry breaking.

 These general results are illustrated with examples involving both tempered and non-tempered representations. In addition, we present a new non-vanishing theorem for period integrals for pairs of reductive symmetric spaces, which is further strengthened by this approach. \end{abstract}
\medskip

\noindent
\textit{Keywords and phrases:}
reductive group, symmetry breaking, unitary representation, restriction,
branching law, fence.

\medskip
\noindent
\textit{2020 MSC}:
Primary
22E46% (1980-now) Semisimple Lie groups and their representations
;
Secondary
20G05, %(1973-now)Representation theory for linear algebraic groups
22E45, %(1973-now)Representations of Lie and linear algebraic groups over real fields: analytic methods For the purely algebraic theory {For the purely algebraic theory, see 20G05}
43A85.%  (1973-now) Harmonic analysis on homogeneous spaces

\newpage

\tableofcontents

\medskip

\section{Introduction}
Any finite-dimensional representation $\Pi$ of a compact Lie group $G$ decomposes into a direct sum of irreducible representations when restricted to a subgroup $G'$ of $G$.
A classical result by H.~Weyl (1946) shows that
there is an interlacing pattern between the highest weights of the irreducible summands of $\Pi|_{G'}$  and of the highest weight of $\Pi$ itself. Fix an irreducible representation $\pi$ of $G'$ and  consider the dimension of $\operatorname{Hom}_{G'}(\Pi|_{G'},\pi)$ as a function of the  highest weight of $\Pi$. This function (\emph{multiplicity}) takes only the values 0 and 1 and we can read off the value from the interlacing pattern of the highest weights. In this article, we provide a new proof of the classical result of H.~Weyl  in Section~\ref{sec:upq} and  describe analogous results for certain infinite-dimensional representations of non-compact  Lie groups, which are real forms $GL(n,\mathbb{C})$. 

In contrast to representations of compact Lie groups, 
 the restriction of an irreducible admissible representation of a reductive Lie group
 to a {\it{non-compact}} subgroup
 $G'$ is generally {\it{not}} a direct sum
of irreducible representations. 
Instead of directly decomposing,
 it is useful to consider \emph{symmetry breaking operators} (SBOs),
 which are continuous $G'$-homomorphisms from a topological $G$-module to a topological $G'$-module.  
In this article, we are concerned mainly with the category $\mathcal{M}(G)$
 of admissible smooth representations of $G$
 of finite length having moderate growth, 
 which are defined on topological Fr{\'e}chet  vector spaces
 \cite[Chap.\ 11]{WaI}. 
Let $\operatorname{Irr}(G)$ denote the set of irreducible objects in ${\mathcal{M}}(G)$.  

We denote by 
\begin{equation}
\label{eqn:Pipi} 
   \text{Hom}_{G'}(\Pi|_{G'}, \pi) 
\end{equation}
the space of SBOs, 
 that is, 
 $G'$-homomorphisms from $\Pi \in {\mathcal{M}}(G)$
 to $\pi \in {\mathcal{M}}(G')$, 
 where the operators are continuous in the corresponding Fr\'echet topology.
The dimension of \eqref{eqn:Pipi} is referred
 to as the {\it{multiplicity}}, 
 which we denote by $[\Pi|_{G'}:\pi]$.  

Explicit results on symmetry breaking and multiplicities  for individual non-tempered representations are still sparse.  
For recent works, see \cite{xk21, xksbonvecI, xksbonvec, OS21} for example.  
If both $G$ and $G'$ are classical linear reductive Lie groups with complexified Lie algebras $({\mathfrak{g}}_{\mathbb{C}}, {\mathfrak{g}}_{\mathbb{C}}')=({\mathfrak{gl}}_{n+1}, {\mathfrak{gl}}_n)$ or $({\mathfrak{so}}_{n+1}, {\mathfrak{so}}_n)$,
 and they satisfy Harish-Chandra's rank conditions, 
 then the GGP conjectures/theorems are mostly concerned with non-zero symmetry breaking for $L$-packets or Vogan-packets of discrete series representations \cite{GGP}.

\medskip
For a pair of representations of groups $(G,G')$, 
 where 
\[
   G=GL(n,\mathbb{R}), \ \  G'= GL(n-1,\mathbb{R}), 
\]
 the dimension of the space of symmetry breaking operators is at most one \cite{xsunzhu}.  
 In this article, 
 we introduce a new approach to detecting the non-vanishing of SBOs between irreducible representations that are not necessarily tempered,
 along with a vanishing result and several new non-vanishing results.  
 
In Section~\ref{sec:tfsbo},
 we also introduce the notion of \lq\lq{fences}\rq\rq\ for interlacing patterns, 
 in contrast to the usual concept of \lq\lq{walls}\rq\rq\
 for Weyl chambers.  
While translation functors can significantly alter the nature of symmetry breaking even inside the Weyl chamber (see Example~\ref{ex:alter_mult} for instance), 
 the concept of \lq\lq{fences}\rq\rq\ plays a crucial role in understanding the behavior of symmetry breaking under translations.  

Building on the key results of Theorems~\ref{thm:23081404} and~\ref{thm:24102601}, 
we prove a new \emph{stability theorem} for the multiplicities in symmetry breaking (Theorem \ref{thm:24120703}). 
This theorem applies to general irreducible representations of $G$ and $G'$, which are not necessarily tempered or even unitary.
Here, $(G, G')$ denotes an arbitrary real form of the pair \linebreak $(GL(n,\mathbb{C}), GL(n-1,\mathbb{C}))$, and the theorem asserts that the multiplicity remains constant unless one crosses \lq\lq{fences}\rq\rq. 

In addition, we establish another new result: a general \emph{vanishing theorem} for symmetry breaking (Theorem~\ref{thm:25080811}), formulated in terms of $\tau$-invariants of irreducible representations.

In Section~\ref{sec:upq}, we illustrate these theorems through known examples of symmetry breaking, focusing on tempered representations, 
such as Weyl's branching laws for finite dimensional representations of \linebreak $(U(n), U(n-1))$ and the Gan--Gross--Prasad conjecture for discrete series representations of the pair $(U(p,q), U(p-1,q))$.

In general, proving non-vanishing of symmetry breaking is a difficult problem. 
However, thanks to the stability theorem for multiplicities within fences
(Theorem~\ref{thm:24120703}), 
it suffices to consider representations specified by particular parameters $(\lambda, \nu)$ within the fences.  

In Section~\ref{sec:Spehrep}, 
 we apply this approach to the branching of special unitary representations of $GL(2m,\mathbb{R})$ to the subgroups $GL(2m-1,\mathbb{R})$. In this case, we also obtain non-zero multiplicities for some non-unitary representations.

In Section~\ref{sec:period}, 
we develop a method to detect the existence of a non-zero symmetry breaking operator 
using period integrals for reductive symmetric spaces. 
Theorem~\ref{thm:24012120} provides a new non-vanishing theorem of period integrals related to discrete series representations of a
pair of reductive symmetric spaces.
These results hold for representations that are not necessarily tempered; this is illustrated by examples in Sections~\ref{sec:GHgeneral} and \ref{sec:upq2}.
 
In Section~\ref{sec:GHgeneral}, 
 we discuss symmetry breaking between irreducible representations in the discrete spectrum of
\[
L^2(G/H)=L^2\big(GL(n,\mathbb{R})\big/(GL(p,\mathbb{R})\times GL(n-p,\mathbb{R}))\big)\] and of 
 \[
 L^2(G'/H')=L^2\big(GL(n-1,\mathbb{R})\big/(GL(p,\mathbb{R}) \times GL(n-p-1,\mathbb{R}))\big).\]
These representations are not tempered if $2p<n-1$.  

For this analysis, we examine the phenomenon of \lq\lq{jumping fences}\rq\rq\ in Section \ref{subsec:jump}, in addition to the non-vanishing theorem of period integrals (Theorem~\ref{thm:24012120}).

In Section \ref{sec:upq2}, 
 we discuss symmetry breaking between the irreducible representations in the discrete spectrum
 of 
\[
L^2(G/H)=  L^2\big(U(p,q)\big/(U(r,s) \times U(p-r, q-s))\big)
\]
 and  of 
\[
L^2(G'/H')=
  L^2\big(U(p-1,q)\big/(U(r,s) \times U(p-r-1, q-s))\big).  
\]

\medskip
In Section~\ref{sec:Arthur}, we conclude the article with some general remarks and illustrate our results with examples of tempered and non-tempered representations, as well as extensions to limits of discrete series representations.

\medskip
Proofs of Theorems~\ref{thm:23081404} and~\ref{thm:24102601} are provided in \cite{HKS},
%\cite[Thm.~5.3]{HKS}, 
and details and proofs of the results in Sections~\ref{sec:Spehrep} through~\ref{sec:upq2} will be published in forthcoming articles \cite{xks25, xks25b}.

\medskip
{\bf{Notation:}}\enspace
${\mathbb{N}}=\{0, 1, 2, \dots, \}$, 
${\mathbb{N}_+}=\{1, 2, 3, \dots, \}$,
${\mathbb{R}}_>^n=\{x\in \mathbb{R}^n:x_1>\dots > x_n\}$,
${\mathbb{R}}_\ge^n=\{x\in \mathbb{R}^n:x_1\ge \dots \ge x_n\}$,
${\mathbb{Z}}_\ge^n={\mathbb Z}^n \cap {\mathbb{R}}_\ge^n$.

\vskip 1pc
\par\noindent
{\bf{Acknowledgement.}}
\newline
The authors would like to thank D.~Prasad and A.~Pal for their warm hospitality during the Centennial Conference for Harish-Chandra, 
held at Harish-Chandra Research Institute (HRI) in Prayagraj, India, from October 9 to 14, 2023.

We are also grateful to the Mathematisches Forschungsinstitut Oberwolfach (MFO), 
the Centro Internazionale per la Ricerca Matematica (CIRM) in Trento, 
 the Institut Henri Poincaré (IHP) in Paris, and the Institut des Hautes \'Etudes Scientifiques (IHES) in Bures-sur-Yvette for giving us the opportunity to work together in pleasant conditions.  

The authors thank the anonymous referee for a careful reading of the original manuscript and for their helpful suggestions that improved its readability.

The first author was partially supported by JSPS under Grant-in-Aid for Scientific Research (A) (JP23H00084).

\bigskip
%%%%%%%%%%%%%%%%%%%%%%%%%%%%%%%%%%%%%%%%%
\section{Symmetry Breaking Under Translations}
\label{sec:tfsbo}
%%%%%%%%%%%%%%%%%%%%%%%%%%%%%%%%%%%%%%%%%%
Let $G \supset G'$ be any real forms of $GL(n,{\mathbb{C}}) \supset GL(n-1,{\mathbb{C}})$.  

In this section, we discuss the behavior of
\lq\lq{translation functors}\rq\rq\
 for symmetry breaking operators (SBOs)
 between representations of $G$ and $G'$.  

\medskip
\subsection{Harish-Chandra Isomorphism and Translation Functor.}

Let ${\mathfrak{g}}_{\mathbb{C}}={\mathfrak{g l}}(N, {\mathbb{C}})$. 
We shall use $N$ to refer to $n$ or $n-1$ later.  
We set
\begin{equation}
\label{eqn:rhon}
   \rho_{N}:=(\tfrac{N-1}2, \tfrac{N-3}2, \dots, \tfrac{1-N}2).  
\end{equation}
Let ${\mathfrak{Z}}({\mathfrak{g}}_{\mathbb{C}})$ denote the center of the enveloping algebra $U({\mathfrak{g}}_{\mathbb{C}})$.  
We normalize the Harish-Chandra isomorphism
\[
   \operatorname{H o m}_{{\mathbb{C}}\operatorname{-alg}}({\mathfrak{Z}}({\mathfrak{g}}_{\mathbb{C}}), {\mathbb{C}})
   \simeq 
   {\mathbb{C}}^N/{\mathfrak{S}}_N, 
\]
 in such a way that the trivial one-dimensional ${\mathfrak{g}}_{\mathbb{C}}$-module has the infinitesimal character $\rho_N \mod {\mathfrak{S}}_N$.

For a ${\mathfrak{g}}$-module $V$ 
 and for 
$
   \tau \in 
   \invHom{{\mathbb{C}}\operatorname{-alg}}
          {{\mathfrak{Z}}({\mathfrak{g}}_{\mathbb{C}})}
          {{\mathbb{C}}} \simeq {\mathbb{C}}^N/{\mathfrak{S}}_N
$, 
 let $P_{\tau}(V)$ denote the $\tau$-primary component of $V$, 
 that is, 
\[
  P_{\tau}(V)=\bigcup_{k=0}^{\infty}\, \bigcap_{z \in {\mathfrak{Z}}({\mathfrak{g}}_{\mathbb{C}})} \operatorname{Ker}(z - \tau(z))^k.  
\]

Let $\{f_i: i=1, \dots, N \}$ be the standard basis of $\mathbb Z^N$.
We focus on the following translation functors 
 in the Casselman--Wallach category ${\mathcal{M}}(G)$
 or in the category of Harish-Chandra modules:
\begin{equation}
\label{eqn:transl_1}
    \phi_{\tau}^{\tau+\varepsilon f_i}(\cdot) :=
    \begin{cases}
        P_{\tau+f_i}(P_{\tau}(\cdot) \otimes {\mathbb{C}}^N)
   &\text{ if }\varepsilon =+,
   \\
 P_{\tau-f_i}(P_{\tau}(\cdot) \otimes ({\mathbb{C}}^N)^{\vee})
 &\text{ if } \varepsilon =-1.
    \end{cases}
\end{equation}

These functors are particular cases of the translation functors introduced by J.C.~Jantzen and G.~Zuckerman.

%%%%%%%%%%%%%%%%%%%%%%%%%%%%%%%%%%%%%%%%%%%%%%%%%%%%%%%%%%%%%%%%%%
\bigskip
\subsection{Non-Vanishing Condition for Translating SBOs.}

%%%%%%%%%%%%%%%%%%%%%%%%%%%%%%%%%%%%%%%%%%%%%%%%%%%%%%%%%%%%%%%%%%%
Suppose that $\Pi \in {\mathcal{M}}(G)$
 (resp., $\pi \in {\mathcal{M}}(G')$)
 has a ${\mathfrak{Z}}({\mathfrak{g}}_{\mathbb{C}})$-infinitesimal character 
 $\tau \in {\mathbb{C}}^n/{\mathfrak{S}}_n$
 (resp. ${\mathfrak{Z}}({\mathfrak{g}}_{\mathbb{C}}')$-infinitesimal character $\tau' \in {\mathbb{C}}^{n-1}/{\mathfrak{S}}_{n-1}$).

In \cite{HKS}, 
 we have established the following theorems, 
 which provide useful information on symmetry breaking under translations.

\medskip

\begin{theorem}
\label{thm:23081404}
Let $\Pi \in {\mathcal{M}}(G)$ and $\pi \in {\mathcal{M}}(G')$.  
Suppose that any generalized eigenspaces of ${\mathfrak{Z}}({\mathfrak{g}}_{\mathbb{C}})$ in $\Pi \otimes {\mathbb{C}}^{n}$
 are eigenspaces.  
\par\noindent
{\rm{(1)}}\enspace
If  $\invHom{G'}{\Pi|_{G'}}{\pi}\ne \{0\}$, 
 then $\invHom{G'}{\phi_{\tau}^{\tau+f_i}(\Pi)|_{G'}}{\pi}\ne \{0\}$
for any $i$ such that 
$\tau_i \not \in \{\tau_1'-\frac 1 2, \tau_2'-\frac 1 2, \dots, \tau_{n-1}'-\frac 1 2\}$.  
\par\noindent
{\rm{(2)}}\enspace
If $\invHom{G'}{\Pi|_{G'}}{\pi}= \{0\}$, then 
 $\invHom{G'}{\phi_{\tau}^{\tau+f_i}(\Pi)|_{G'}}{\pi}= \{0\}$
 for any $i$ such that 
$
   \tau_i \not \in \{\tau_1'-\frac 1 2, \tau_2'-\frac 1 2, \dots, \tau_{n-1}'-\frac 1 2\}$.  
\end{theorem}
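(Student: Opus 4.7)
The plan is to decompose the space $\invHom{G'}{(\Pi\otimes\mathbb{C}^n)|_{G'}}{\pi}$ in two different ways---once as a direct sum indexed by the translation functors $\phi_{\tau}^{\tau+f_j}$ on the $G$-side, and once via tensor--Hom adjunction combined with the restriction $\mathbb{C}^{n}|_{\mathfrak{g}'_{\mathbb{C}}} \cong \mathbb{C}^{n-1}\oplus\mathbb{C}$ of the defining representation---and then to match the summands of the two decompositions using the $\mathfrak{Z}(\mathfrak{g}'_{\mathbb{C}})$-action. The non-resonance hypothesis on $\tau_i$ will turn out to isolate $\invHom{G'}{\phi_{\tau}^{\tau+f_i}(\Pi)|_{G'}}{\pi}$ as the single summand matching $\invHom{G'}{\Pi|_{G'}}{\pi}$, from which parts (1) and (2) will follow simultaneously.

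Concretely, the eigenspace hypothesis on $\Pi\otimes\mathbb{C}^n$ yields the $G$-module decomposition $\Pi\otimes\mathbb{C}^n = \bigoplus_{j=1}^{n}\phi_{\tau}^{\tau+f_j}(\Pi)$, so that
\[
\invHom{G'}{(\Pi\otimes\mathbb{C}^n)|_{G'}}{\pi} = \bigoplus_{j=1}^{n}\invHom{G'}{\phi_{\tau}^{\tau+f_j}(\Pi)|_{G'}}{\pi},
\]
while tensor--Hom adjunction together with $\mathbb{C}^{n}|_{\mathfrak{g}'_{\mathbb{C}}} \cong \mathbb{C}^{n-1}\oplus\mathbb{C}$ gives
\[
\invHom{G'}{(\Pi\otimes\mathbb{C}^n)|_{G'}}{\pi} \cong \invHom{G'}{\Pi|_{G'}}{\pi} \oplus \invHom{G'}{\Pi|_{G'}}{\pi\otimes(\mathbb{C}^{n-1})^{\vee}}.
\]
The second summand on the right breaks up further into $n-1$ pieces of the form $\invHom{G'}{\Pi|_{G'}}{\phi_{\tau'}^{\tau'-f_k}(\pi)}$, according to the generalized $\mathfrak{Z}(\mathfrak{g}'_{\mathbb{C}})$-eigenvalues $\tau'-f_k$ ($k=1,\dots,n-1$) appearing in $\pi\otimes(\mathbb{C}^{n-1})^{\vee}$. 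The crux is then to match the $n$ summands on the left with the $n$ pieces (one $B:=\invHom{G'}{\Pi|_{G'}}{\pi}$ and $n-1$ of type $C_k$) on the right.

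To carry out this matching I would track generalized $\mathfrak{Z}(\mathfrak{g}'_{\mathbb{C}})$-eigenvalues on each side of the adjunction. The condition $\tau_i \notin \{\tau'_j-\tfrac12\}$ is equivalent to $\tau_i+\tfrac12 \notin \{\tau'_j\}$, and, after accounting for the $\rho_n-\rho_{n-1}$ half-shift, is precisely the non-resonance condition preventing $A_i:=\invHom{G'}{\phi_{\tau}^{\tau+f_i}(\Pi)|_{G'}}{\pi}$ from pairing with any $C_k$; the adjunction isomorphism must therefore identify $A_i$ with the remaining summand $B$, and both directions (1) and (2) then fall out at once from $A_i\cong B$. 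The main obstacle lies precisely here: the two gradings live on opposite sides of the adjunction, so one must compute, for each $j$, exactly which generalized $\mathfrak{Z}(\mathfrak{g}'_{\mathbb{C}})$-eigenvalues are carried by $\phi_{\tau}^{\tau+f_j}(\Pi)|_{G'}$, and verify that the non-resonance hypothesis isolates the correct summand. Once this bookkeeping is in place, the theorem reduces to reading off the matching.
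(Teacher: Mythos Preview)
The paper does not contain a proof of this theorem; it is announced and immediately deferred to the forthcoming article \cite{HKS} (see the sentence introducing Theorems~\ref{thm:23081404} and \ref{thm:24102601}, and the closing line of the introduction). So there is no argument here to compare against.

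On your proposal itself: the two decompositions of $\invHom{G'}{(\Pi\otimes\mathbb{C}^n)|_{G'}}{\pi}$ are correct and are the natural starting point, but the matching step is a genuine gap rather than routine bookkeeping. The summands $A_j$ are cut out by the $\mathfrak{Z}(\mathfrak{g}_{\mathbb{C}})$-action on the \emph{source} $\Pi\otimes\mathbb{C}^n$, while $B$ and the $C_k$ are cut out by the $\mathfrak{Z}(\mathfrak{g}'_{\mathbb{C}})$-action on the \emph{target} $\pi\otimes(\mathbb{C}^n|_{G'})^\vee$; the tensor--Hom adjunction carries no a~priori compatibility between these two gradings. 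In particular, the sentence ``the adjunction isomorphism must therefore identify $A_i$ with the remaining summand $B$'' cannot hold as written: if two indices $i_1\ne i_2$ both satisfy the non-resonance condition, then $A_{i_1}$ and $A_{i_2}$ are distinct direct summands of the left-hand side and cannot both be carried isomorphically onto the single summand $B$ by a direct-sum isomorphism. What one actually needs is that, for each good $i$, the composite
\[
B \;\hookrightarrow\; \bigoplus_{j} A_j \;\twoheadrightarrow\; A_i
\]
is an isomorphism (or at least injective), and this amounts to understanding how the $\mathfrak{Z}(\mathfrak{g}_{\mathbb{C}})$-projector onto $P_{\tau+f_i}(\Pi\otimes\mathbb{C}^n)$ interacts with the specific element $v\mapsto v\otimes e_n$ (where $e_n$ spans the $G'$-fixed line in $\mathbb{C}^n$). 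That is not formal: it relies on the special relationship between $\mathfrak{Z}(\mathfrak{gl}_n)$ and $\mathfrak{Z}(\mathfrak{gl}_{n-1})$ for this particular pair, and is presumably the substance of the argument in \cite{HKS}. A smaller point: your further splitting of $\invHom{G'}{\Pi|_{G'}}{\pi\otimes(\mathbb{C}^{n-1})^\vee}$ into pieces $C_k$ implicitly uses an eigenspace hypothesis on $\pi\otimes(\mathbb{C}^{n-1})^\vee$ that the theorem does \emph{not} assume; only the hypothesis on $\Pi\otimes\mathbb{C}^n$ is given.
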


\medskip

\begin{theorem}
\label{thm:24102601}
Let $\Pi \in {\mathcal{M}}(G)$ and $\pi \in {\mathcal{M}}(G')$.  
Suppose that any generalized eigenspaces of ${\mathfrak{Z}}({\mathfrak{g}}_{\mathbb{C}})$ in $\Pi \otimes ({\mathbb{C}}^{n})^{\vee}$ are eigenspaces.  
\par\noindent
{\rm{(1)}}\enspace
If $\invHom{G'}{\Pi|_{G'}}{\pi}\ne \{0\}$, 
 then $\invHom{G'}{\phi_{\tau}^{\tau-f_i}(\Pi)}{\pi}\ne \{0\}$
for any $i$ such that 
 $\tau_i \not \in \{\tau_1'+\frac 1 2, \tau_2'+\frac 1 2, \dots, \tau_{n-1}'+\frac 1 2\}$.  
\par\noindent
{\rm{(2)}}\enspace
If $\invHom{G'}{\Pi|_{G'}}{\pi} = \{0\}$,
 then $\invHom{G'}{\phi_{\tau}^{\tau-f_i}(\Pi)|_{G'}}{\pi}= \{0\}$
 for any $i$ such that 
$
   \tau_i \not \in \{\tau_1'+\frac 1 2, \tau_2'+\frac 1 2, \dots, \tau_{n-1}'+\frac 1 2\}$.  
\end{theorem}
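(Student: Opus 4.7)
The plan is to mirror the strategy used for Theorem~\ref{thm:23081404} (whose detailed proof will appear in \cite{HKS}), replacing the standard representation $\mathbb{C}^n$ by its contragredient $(\mathbb{C}^n)^\vee$ throughout. This substitution reverses the sign of the weight shift, turning the condition $\tau_i \notin \{\tau'_k - \tfrac12\}$ in Theorem~\ref{thm:23081404} into the present $\tau_i \notin \{\tau'_k + \tfrac12\}$. The two essential inputs are the $G'$-branching
\[
(\mathbb{C}^n)^\vee\big|_{G'} \cong (\mathbb{C}^{n-1})^\vee \oplus \mathbb{C},
\]
and the elementary fact that tensoring a $\mathfrak{Z}(\mathfrak{g}_{\mathbb{C}})$-eigenvector of eigenvalue $\tau$ by $(\mathbb{C}^n)^\vee$ produces generalized eigenvectors of eigenvalues $\tau - f_j$ for $j = 1,\dots, n$.

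The first step is to form
\[
\mathcal{H} := \operatorname{Hom}_{G'}\!\bigl((\Pi \otimes (\mathbb{C}^n)^\vee)\big|_{G'},\, \pi\bigr)
\]
and compute it in two complementary ways. Since $\mathfrak{Z}(\mathfrak{g}_{\mathbb{C}})$ centralizes $G \supset G'$, and its generalized eigenspaces on $\Pi \otimes (\mathbb{C}^n)^\vee$ are honest eigenspaces by hypothesis, the resulting primary decomposition is $G'$-stable and yields
\[
\mathcal{H} = \bigoplus_{j} \operatorname{Hom}_{G'}\!\bigl(\phi_\tau^{\tau - f_j}(\Pi)\big|_{G'},\, \pi\bigr).
\]
On the other hand, the branching identity produces
\[
\mathcal{H} = \operatorname{Hom}_{G'}(\Pi|_{G'},\pi)\ \oplus\ \operatorname{Hom}_{G'}\!\bigl(\Pi|_{G'} \otimes (\mathbb{C}^{n-1})^\vee,\, \pi\bigr).
\]
Equating the two expressions gives the master identity that drives the argument.

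The second step is to use $\mathfrak{Z}(\mathfrak{g}_{\mathbb{C}}')$-primary projection at $\tau'$ on the source---any $G'$-morphism into $\pi$ factors through this projection---to compare the summands. On the branching side, the second summand decomposes further into $G'$-translation functors applied to the $\mathfrak{Z}(\mathfrak{g}_{\mathbb{C}}')$-primary components of $\Pi|_{G'}$ whose characters are of the form $\tau' + f_k$ ($1 \le k \le n-1$). The half-integer shift $\tfrac12$ appearing in the fence condition arises from $\rho_n - \rho_{n-1}$ (see \eqref{eqn:rhon}), combined with the sign flip produced by using $(\mathbb{C}^n)^\vee$ rather than $\mathbb{C}^n$. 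The hypothesis $\tau_i \notin \{\tau'_k + \tfrac12 : 1 \le k \le n-1\}$ is tailored so that, after $\tau'$-projection, the $j = i$ summand in the first decomposition can align only with the untranslated summand $\operatorname{Hom}_{G'}(\Pi|_{G'},\pi)$ of the second decomposition, not with any of the $f_k$-translated pieces. This alignment yields a canonical vector-space isomorphism between $\operatorname{Hom}_{G'}(\phi_\tau^{\tau-f_i}(\Pi)|_{G'},\pi)$ and $\operatorname{Hom}_{G'}(\Pi|_{G'},\pi)$, from which both (1) and (2) follow at once.

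I expect the main obstacle to be making the alignment step fully rigorous: one must show that, under the fence condition, intersecting the $(\tau - f_i)$-primary component for $\mathfrak{Z}(\mathfrak{g}_{\mathbb{C}})$ with the $\tau'$-primary component for $\mathfrak{Z}(\mathfrak{g}_{\mathbb{C}}')$ inside $(\Pi \otimes (\mathbb{C}^n)^\vee)|_{G'}$ recovers precisely $P_{\tau'}(\Pi|_{G'})$ as a $G'$-module, with no residual contribution from the $(\mathbb{C}^{n-1})^\vee$-tensor piece. This amounts to bookkeeping the interaction of the two commuting centers through the nontrivial $G'$-branching of the dual standard representation, and is the precise contragredient of the corresponding verification in the proof of Theorem~\ref{thm:23081404}; once that argument is in hand, its dualization should be formal.
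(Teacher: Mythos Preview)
The paper does not contain a proof of Theorem~\ref{thm:24102601}: both it and the companion Theorem~\ref{thm:23081404} are stated with the proofs deferred to the forthcoming article \cite{HKS} (see the sentence preceding the two theorems and the remark at the end of the introduction). There is therefore no proof in the paper to compare against.

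That said, your strategy is the natural one and is almost certainly what \cite{HKS} does: Theorems~\ref{thm:23081404} and~\ref{thm:24102601} are manifestly formal duals (swap $\mathbb{C}^n \leftrightarrow (\mathbb{C}^n)^\vee$, $+f_i \leftrightarrow -f_i$, $\tau'_k-\tfrac12 \leftrightarrow \tau'_k+\tfrac12$), so once one is proved the other follows by the substitution you describe. Your two-way computation of $\mathcal{H}=\operatorname{Hom}_{G'}\bigl((\Pi\otimes(\mathbb{C}^n)^\vee)|_{G'},\pi\bigr)$ is the standard translation-functor maneuver, and you correctly isolate the alignment step as the nontrivial content. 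One point deserves emphasis: the two decompositions of $\mathcal{H}$ are genuinely skew, since the $G'$-splitting $(\mathbb{C}^n)^\vee|_{G'}\cong(\mathbb{C}^{n-1})^\vee\oplus\mathbb{C}$ is \emph{not} $\mathfrak{Z}(\mathfrak{g}_{\mathbb{C}})$-stable, and $\mathfrak{Z}(\mathfrak{g}'_{\mathbb{C}})$ acts on $\mathcal{H}$ by the single scalar $\tau'$ (via the target $\pi$), so it cannot separate the summands either. Thus the alignment is not a matter of intersecting commuting primary projections on $\mathcal{H}$; it requires input specific to the pair $(\mathfrak{gl}_n,\mathfrak{gl}_{n-1})$. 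Your instinct that this is exactly the work carried out in \cite{HKS} for Theorem~\ref{thm:23081404}, and that its dualization is then formal, is sound.
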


Theorems~\ref{thm:23081404} and~\ref{thm:24102601} are stated under the assumption that \emph{generalized eigenspaces of ${\mathfrak{Z}}({\mathfrak{g}}_{\mathbb{C}})$ in $\Pi \otimes {\mathbb{C}}^{n}$ and $\Pi \otimes ({\mathbb{C}}^{n})^{\vee}$
 are actual eigenspaces}. This condition holds {\it{generically}}; for example, it is always satisfied when $G$ is compact, and it is also satisfied for any Harish-Chandra discrete series representation of $G=U(p,q)$ (see \cite[Prop.~5.5]{HKS}).

Theorems~\ref{thm:23081404} and~\ref{thm:24102601} reveal an intrinsic reason for the appearance of interlacing patterns in certain branching laws, 
such as Weyl's branching law and the Gan--Gross--Prasad conjecture, 
as we discuss in the following section.

Building on
Theorems~\ref{thm:23081404} and~\ref{thm:24102601},
we establish in the remainder of this section two useful results
(Theorems~\ref{thm:24120703} and \ref{thm:25080811}),
based on the concept of \emph{fences} (Definition~\ref{def:int}).

\medskip
\subsection{Interleaving Pattern}
~~~\newline
We set
\begin{align*}
{\mathbb{R}}_{>}^{n}:=&\{x \in {\mathbb{R}}^{n}: x_1 > \cdots > x_n\}, 
\\
{\mathbb{R}}_{\ge}^{n}:=&\{x \in {\mathbb{R}}^{n}: x_1 \ge \cdots \ge x_n\}.
\\
{\mathbb{Z}}_{\ge}^{n}:=&{\mathbb{Z}}^{n} \cap {\mathbb{R}}_{\ge}^{n}.  
\end{align*}

We introduce the notion of \lq\lq{fences}\rq\rq\ as combinatorial objects.
This serves as a refinement of the \lq\lq{walls}\rq\rq\ of the Weyl chambers when we consider the branching for the restriction $G \downarrow G'$,
 where $(G, G')$ are any real forms of
 $(GL(n, \mathbb{C}), GL(n-1, \mathbb{C}))$.  
\begin{dfn}
[Interleaving Pattern and Fence]
\label{def:int}
For $x \in {\mathbb{R}}^n$ and $y \in {\mathbb{R}}^m$, 
 an {\it{interleaving pattern}} $D$
 in ${\mathbb{R}}_{>}^n \times {\mathbb{R}}_{>}^m$
 is a total order 
 among $\{x_1, \dots, x_{n}$, $y_1, \dots, y_{m}\}$, 
 which is compatible 
 with the underlying inequalities
 $x_1 > x_2 > \cdots >x_{n}$ and $y_1 > y_2 > \cdots >y_{m}$.  
For an adjacent inequality between $x_i$ and $y_j$
 such as $x_i > y_j$ or $y_j > x_i$, 
 we refer to the hyperplane in ${\mathbb{R}}^{n+m}$ defined by $x_i = y_j$ as a {\it{fence}}.  
\end{dfn}

By an abuse of notation, 
 we also use the same letter $D$ 
 to denote the region in ${\mathbb{R}}_{>}^n \times {\mathbb{R}}_{>}^m$
 given by its defining inequalities. 
  
Let ${\mathfrak{P}}({\mathbb{R}}^{n, m})$ denote the set of all interleaving patterns in ${\mathbb{R}}_{>}^n \times {\mathbb{R}}_{>}^m$.

\begin{example}
There are 35 interleaving patterns
 for ${\mathbb{R}}_{>}^4 \times {\mathbb{R}}_{>}^3$, 
 such as 
\begin{align*}
 D_1=&\{(x,y)\in {\mathbb{R}}^{4+3}:x_1 > y_1 > x_2  > y_2 > x_3 > y_3 >x_4\}, 
\\
 D_2=&\{(x,y)\in {\mathbb{R}}^{4+3}:y_1 > y_2 > x_1 > x_2 > x_3 > x_4 > y_3\}.  
\end{align*} 
The interleaving pattern $D_1$ is also referred to as the \emph{interlacing pattern}.
There are six fences associated with $D_1$, namely those given by 
\[ x_1=y_1, \ y_1=x_2, \ x_2=y_2, \ y_2=x_3, \ x_3=y_3,
\ \text{and} \ y_3=x_4.
\]
In contrast, there are just two fences associated with $D_2$, namely those given by 
\[
x_1=y_2 \ \text{ and }\ x_4=y_3.
\]
\end{example}

We also consider interleaving patterns
 in ${\mathbb{R}}_{\ge}^{n} \times {\mathbb{R}}_{\ge}^{m}$
 such as $x_1 > y_1 \ge y_2 >x_2$, 
 or those including equalities
 such as 
 $x_1 = y_1 > x_2 = y_2$
 or 
 $x_1 \ge y_1 \ge x_2>y_2$.  
These interleaving patterns will be called {\it{weak interleaving patterns}}.  

\bigskip
%%%%%%%%%%%%%%%%%%%%%%%%%%%%%%%%%%%%%%%%%%
\subsection{Stability Theorem for Multiplicities in Symmetry Breaking inside Fences}
~~~\newline
This section establishes a \emph{stability theorem} for the multiplicities in symmetry breaking under coherent continuation.

Let $(G, G')=(GL(n,{\mathbb{R}}), GL(n-1,{\mathbb{R}}))$
 or $(U(p,q), U(p-1, q))$.  

Let ${\mathcal{V}}(G)$ denote the Grothendieck group of ${\mathcal{M}}(G)$, 
 that is, 
 the abelian group generated by $X \in {\mathcal{M}}(G)$ modulo the equivalence relation 
\[
X \sim Y+Z,
\]
 whenever there is a short exact sequence 
 $0 \to Y \to X \to Z\to 0$.

Let $\Pi \colon \xi + {\mathbb{Z}}^n \to {\mathcal{V}}(G)$ be a coherent family of $G$-modules, 
 specifically, 
 $\Pi$ satisfies the following properties:

(1)\enspace
 $\Pi_{\lambda}$ has a ${\mathfrak{Z}}({\mathfrak{g}}_{\mathbb{C}})$-infinitesimal character $\lambda$ if $\lambda \in \xi + {\mathbb{Z}}^n$;

(2)\enspace
$
   \Pi_{\lambda} \otimes F 
   \simeq \underset{\nu \in \Delta(F)}\sum \Pi_{\lambda+\nu}
$
 in ${\mathcal{V}}(G)$
 for any finite-dimensional representation $F$ of $G$.

\begin{theorem}[Stability Theorem in Symmetry Breaking]
\label{thm:24120703}
Suppose that $\Pi \in \operatorname{Irr}(G)$
 has a $\mathfrak{Z}(\mathfrak{g}_\mathbb C)$-infinitesimal character $\xi$
 satisfying
 \begin{equation}
 \label{eqn:reg_xi}
     \xi_i - \xi_{i+1} \ge 1 \quad
 (1 \le i \le n-1).
 \end{equation}
 Let $\Pi \colon \xi + {\mathbb{Z}}^n \to {\mathcal{V}}(G)$
 be the coherent family starting from $\Pi_{\xi}:= \Pi$.
 Let $\nu$ be the infinitesimal character of $\pi \in \operatorname{Irr}(G')$.  
 If $(\xi, \nu)$ satisfies an interleaving pattern 
 $D$ in ${\mathbb{R}}_>^{n} \times {\mathbb{R}}_{\ge}^{m}$, then we have
\[
  [\Pi|_{G'}:\pi] = [\Pi_{\lambda}|_{G'}:\pi]
\]
 for all $\lambda \in \xi +{\mathbb{Z}}^n$
 such that $(\lambda, \nu)$ satisfies the same interleaving pattern $D$.  
\end{theorem}

\begin{remark}
\rm{(1)}\enspace
Such a coherent family exists uniquely because our assumption guarantees that $\xi$ is non-singular.  
\newline
{\rm{(2)}}\enspace
The concept of \lq\lq fences\rq\rq\ is a refinement of the Weyl chambers. 
Hence, if we do not cross the fence,
that is, if $(\lambda, \nu) \in D$,
then $\lambda$ is non-singular 
 and remains in the same Weyl chamber with $\xi$.
Consequently,
 $\Pi_{\lambda}$ is irreducible for any such $\lambda$.
  
\end{remark}

We recall our notation that $\{f_i\}_{1 \le i \le n}$ is the standard basis of ${\mathbb{Z}}^n$.  
To prove Theorem~\ref{thm:24120703}, we introduce the finite set defined by
\[
  {\mathcal{E}}:=\{\pm f_i : 1 \le i \le n\} \subset {\mathbb{Z}}^n.  
\]

\begin{lemma}
\label{lem:24120608}
Let $D \in {\mathfrak{P}}(\mathbb R^{n,m})$.  
For any $(\xi, \nu)$ and $(\lambda, \nu) \in D$
 such that $\lambda-\xi \in {\mathbb{Z}}^n$, 
 there exists a sequence $\lambda^{(j)} \in \xi + {\mathbb{Z}}^n$
 $(j=0,1,2,\dots, N)$
 with the following properties:
\[
  \lambda^{(0)}=\xi, \,\,
  \lambda^{(N)}=\lambda, \,\,
  \lambda^{(j)}-\lambda^{(j-1)} \in {\mathcal{E}}, \,\,
  (\lambda^{(j)}, \nu) \in D
  \quad
  \text{for $1 \le j \le N$.}
\]
\end{lemma}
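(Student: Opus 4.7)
The plan is to construct the path explicitly by passing through the componentwise minimum $\mu$ of $\xi$ and $\lambda$: first descend from $\xi$ to $\mu$, then ascend from $\mu$ to $\lambda$, moving one coordinate at a time. Define $\mu \in \mathbb{R}^n$ by $\mu_i := \min(\xi_i, \lambda_i)$. Since each $\mu_i \in \{\xi_i, \lambda_i\}$ and $\lambda - \xi \in \mathbb{Z}^n$, we have $\mu \in \xi + \mathbb{Z}^n$.

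The first and essential step is to verify $(\mu, \nu) \in D$. The strict inequality $\mu_i > \mu_{i+1}$ follows from a short case analysis on which of $\xi,\lambda$ achieves $\mu_i$ and $\mu_{i+1}$: in each of the four cases one combines $\xi_i > \xi_{i+1}$ and $\lambda_i > \lambda_{i+1}$ with bounds such as $\xi_{i+1} \ge \mu_{i+1}$ and $\lambda_i \ge \mu_i$. For each adjacent relation between some $x_i$ and some $\nu_j$ prescribed by $D$, the same relation is inherited by $\mu_i$, because taking a pointwise minimum preserves a common upper bound (when $D$ has $\nu_j > x_i$ or $\nu_j \ge x_i$) and preserves a common lower bound (when $D$ has $x_i > \nu_j$ or $x_i \ge \nu_j$), since both $\xi_i$ and $\lambda_i$ already satisfy the given relation.

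Granted $(\mu, \nu) \in D$, we build the path in two halves. For the descent $\xi \to \mu$, we process coordinates in the order $i = n, n-1, \dots, 1$, decrementing the $i$-th coordinate by $-f_i$ in unit steps from $\xi_i$ down to $\mu_i$. When processing $i$, the positions $j > i$ have already been reduced to $\mu_j$ and the positions $j < i$ still hold $\xi_j$; at any intermediate value $v \in \{\mu_i, \mu_i + 1, \dots, \xi_i\}$ the chain
\[
\xi_{i-1} > \xi_i \ge v \ge \mu_i > \mu_{i+1}
\]
yields both $\xi_{i-1} > v$ and $v > \mu_{i+1}$, while the adjacent $\nu_j$ comparisons are inherited from either $\nu_j > \xi_i \ge v$ or $v \ge \mu_i > \nu_j$ (with the obvious weak analogues when $D$ places an equality at the adjacent slot). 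The ascent $\mu \to \lambda$ is entirely symmetric: process coordinates in the order $i = 1, 2, \dots, n$, incrementing each from $\mu_i$ up to $\lambda_i$, the intermediate checks now resting on $\lambda_{i-1} > \lambda_i \ge v \ge \mu_i > \mu_{i+1}$ together with $\nu_j > \lambda_i \ge v$ or $v \ge \mu_i > \nu_j$. The main (and essentially the only) obstacle is the verification that $(\mu, \nu) \in D$; once this is in hand, concatenating the descent and ascent furnishes the desired sequence $\lambda^{(0)} = \xi, \lambda^{(1)}, \dots, \lambda^{(N)} = \lambda$ with $\lambda^{(j)} - \lambda^{(j-1)} \in \mathcal{E}$ and $(\lambda^{(j)}, \nu) \in D$ for every $j$.
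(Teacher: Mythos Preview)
Your proof is correct. The key verification that the componentwise minimum $\mu$ satisfies $(\mu,\nu)\in D$ goes through exactly as you describe: for the internal ordering, if $\mu_{i+1}=\xi_{i+1}$ then $\xi_{i+1}\le\lambda_{i+1}<\lambda_i$ and $\xi_{i+1}<\xi_i$, whence $\mu_{i+1}<\mu_i$; and each mixed inequality $x_i\gtrless y_j$ is inherited because both $\xi_i$ and $\lambda_i$ satisfy it. The explicit monotone descent and ascent then stay inside $D$ for the reasons you give. (The parenthetical about ``weak analogues'' is unnecessary here: $D\in\mathfrak{P}_n$ is a strict interlacing pattern, so no equalities occur at any slot.)

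Your route differs from the paper's. The paper does not use $\min(\xi,\lambda)$ but instead a \emph{universal} corner point $\mu$ of the fibre $\{x:(x,\nu)\in D\}\cap(\xi+\mathbb{Z}^n)$, defined via $m(D)$ by minimising the first $m(D)$ coordinates and maximising the rest; this $\mu$ depends only on $\nu$, $D$, and the coset $\xi+\mathbb{Z}^n$, so one obtains $\xi\sim\mu$ and $\lambda\sim\mu$ from the same argument and concludes by transitivity. Your choice of the componentwise minimum is more elementary---it avoids introducing $m(D)$ and gives a fully explicit path directly---while the paper's choice has the conceptual advantage that a single anchor point serves every pair $(\xi,\lambda)$ in the coset.
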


\begin{proof}
For an interleaving pattern $D$, we define $m(D) \in \{0,1,\dots,n\}$ as follows:
 $m(D):=0$ if $D$ implies $y_1 > x_1$, 
 and otherwise, 
\begin{equation}
\label{eqn:mD}
 m(D):=\text{the largest $i$ such that $x_i>y_1$ in $D$}.  
\end{equation}

There exists a unique element $\mu \in \xi + {\mathbb{Z}}^n$
 such that $(\mu, \nu) \in D$
 and that $\mu$ satisfies the following property
 for any $\lambda \in \xi + {\mathbb{Z}}^n$ with $(\lambda, \nu) \in D$:
\begin{alignat*}{2}
  &\mu_i \le \lambda_i \quad &&\text{if $i \le m(D)$, }
\\
 &\mu_i \ge \lambda_i \quad &&\text{if $m(D)< i \le n$. }
\end{alignat*}

First, 
 we assume that $\lambda=\mu$.  
Then it is readily verified by an inductive argument
 that Lemma \ref{lem:24120608} holds for $\lambda=\mu$.

Second, 
 since the existence of the sequences $\{\lambda^{(j)}\}_{0 \le j \le N}$
 in Lemma \ref{lem:24120608} defines an equivalence relation $\sim$
 among non-singular dominant elements in $\xi+{\mathbb{Z}}^n$, 
 we have $\xi \sim \mu \sim \lambda$, 
 whence the lemma.  
\end{proof}

\begin{proof}
[Proof of Theorem \ref{thm:24120703}]
By Lemma \ref{lem:24120608}, 
 it suffices to prove Theorem \ref{thm:24120703}
 when $\lambda -\xi \in {\mathcal{E}}$.  
For example, 
 suppose that $\lambda-\xi=f_i$ for some $1 \le i \le n$.  
Then, we have
\[
  \xi_i \not \in \{\nu_1-\frac 1 2, \nu_2-\frac 1 2, \dots, \nu_n-\frac 1 2\}
\]
because $(\lambda, \nu)$ and $(\xi, \nu)$ satisfy
 the same interleaving property.

On the other hand, 
 since $\xi$ is non-singular and $\xi_a-\xi_b \in {\mathbb{Z}}$
 for any $1 \le a \le b\le n$, 
 $\xi+f_j$ ($1 \le j \le n$) lies in the same Weyl chamber as $\xi$.  
Therefore, 
 $\Pi_{\xi+f_j} \simeq \phi_{\xi}^{\xi+f_j}(\Pi)$ is either
 irreducible or zero.  
Thus, 
 all the assumptions in Theorem \ref{thm:23081404}
 are satisfied, 
 and we conclude
\[
  [\Pi_{\lambda}|_{G'}:\pi]=[\phi_{\xi}^{\xi+f_j}(\Pi):\pi] \ne 0.  
\]
The multiplicity-freeness theorem concludes 
 that $[\Pi_{\lambda}|_{G'}:\pi]=1$.  
The case $\lambda-\xi=-f_i$ can be proven 
 similarly by using Theorem \ref{thm:24102601}.  
\end{proof}

\bigskip
\subsection{A General Vanishing Theorem for Symmetry Breaking}
%%%%%%%%%%%%%%%%%%%%%%%%%%%%%%%%%%%%%%%%%%%%%
~~~\newline
The stability theorem for multiplicities in symmetry breaking (Theorem~\ref{thm:24120703}) leads to a general vanishing theorem for symmetry breaking, for which we provide a proof in this section (Theorem~\ref{thm:25080811}).
 The theorem is formulated in terms of $\tau$-invariants of representations, 
 which we briefly recall below.  

For a non-singular weight $\xi$, the set of integral roots with respect to $\xi$ is defined by
\begin{align*}
    R(\xi):=\,&\{\alpha \in \Delta({\mathfrak{g}}_{\mathbb{C}}, {\mathfrak{j}}_{\mathbb{C}})
              :\langle \alpha^{\vee}, \xi \rangle \in {\mathbb{Z}}\}
\\
         =\,&\{e_i-e_j: i \ne j, \ \xi_i-\xi_j \in {\mathbb{Z}}\}.  
\end{align*}

The weight $\xi$ is \emph{integral} if  
$
   R(\xi)=\Delta({\mathfrak{g}}_{\mathbb{C}}, {\mathfrak{j}}_{\mathbb{C}}).
$
We define the system of positive integral roots
 with respect to $\xi$ by 
\begin{equation*}
    R^+(\xi):=\{e_i-e_j:\xi_i-\xi_j \in {\mathbb{N}}_+\}, 
\end{equation*}
and denote by $\Psi^+(\xi)$ the corresponding set of simple roots in $R^+(\xi)$.

\begin{dfn}[$\tau$-invariant]
\label{def:tau_inv} 
Suppose that $\Pi \in \operatorname{Irr}(G)$ has a non-singular 
 ${\mathfrak{Z}}({\mathfrak{g}}_{\mathbb{C}})$-infinitesimal character $\xi$.
 A simple root $\alpha \in \Psi(\xi)$ is called a $\tau$-{\emph{invariant}}
 of the representation $\Pi$ if 
 \[\Pi_{\xi+ \mu}=0
 \ \text{ for every } \
 \mu \in {\mathbb{Z}}^n
\ \text{ such that } \
%\begin{equation*}
  \langle \alpha^\vee, \xi + \mu \rangle =0.
%\end{equation*}
\]
Here
  $\Pi \colon \xi + {\mathbb{Z}}^n \to {\mathcal{V}}(G)$
 denotes the coherent continuation so that 
$\Pi_{\xi}$ is the originally given representation $\Pi$.
\end{dfn}

We denote by $\tau(\Pi) \subset \Psi^+(\xi)$
 the set of $\tau$-invariants of $\Pi$.

In what follows, 
 we assume for simplicity
 that $\xi$ is dominant integral;
 that is, 
$R(\xi) = \Delta({\mathfrak{g}}_{\mathbb{C}}, {\mathfrak{j}}_{\mathbb{C}})$
 and $\Psi^+(\xi)=\{e_1-e_2, \dots, e_{n-1}-e_n\}$.

\begin{theorem}
[Vanishing Theorem]
\label{thm:25080811}
Suppose that $\Pi \in \operatorname{Irr}(G)$
 has a non-singular ${\mathfrak{Z}}({\mathfrak{g}}_{\mathbb{C}})$-infinitesimal character $\lambda \in {\mathbb{R}}_>^n$.  
Let $D$ be an interleaving pattern in ${\mathbb{R}}_>^n \times {\mathbb{R}}_{\ge}^{n-1}$.

Assume that there exists $i$ $(1 \le i \le n-1)$ 
 such that the following two conditions hold:
\begin{enumerate}
\item[(1)] $e_i-e_{i+1} \in \tau(\Pi)$;
\item[(2)]
 $\lambda_i \lambda_{i+1}$ appears 
 as an adjacent string;
 in other words, 
 there is no $\nu_j$
 such that the inequality $\lambda_i > \nu_j > \lambda_{i+1}$
 is allowed in $D$.  
\end{enumerate}

Then 
\[
   [\Pi|_{G'}:\pi]=0.  
\]
\end{theorem}

\begin{proof}
[Proof of Theorem \ref{thm:25080811}]
Let $\Pi \colon \lambda + {\mathbb{Z}}^n \to {\mathcal{V}}(G)$ be 
 the coherent family starting from $\Pi_{\lambda} = \Pi$.  

Suppose that $\lambda_i \lambda_{i+1}$ appears
 as an adjacent string 
 in the interleaving pattern $D$ for $(\lambda, \nu)$.  
Then, 
 there exists $\mu \in {\mathbb{Z}}^n$
 such that the following three conditions are satisfied:
\begin{itemize}
\item[$\bullet$]
$\langle \lambda+\mu, e_i-e_{i+1} \rangle =0$;
\item[$\bullet$]
$\langle \lambda+\mu, \beta \rangle \ne 0$
 for any $\beta \in \Delta^+({\mathfrak{g}}_{\mathbb{C}}, {\mathfrak{j}}_{\mathbb{C}})\setminus \{e_i-e_{i+1}\}$;
\item[$\bullet$]
the pair $(\lambda+\mu, \nu)$ satisfies the same interleaving pattern $D$.  
\end{itemize}

As in Lemma~\ref{lem:24120608}, 
 there exists a sequence $\lambda^{(j)} \in \lambda+ {\mathbb{Z}}^n$
 $(j=0, 1, \dots, N)$
 with the following properties:
\begin{itemize}
\item[]
$\lambda^{(0)}$, $\dots$, $\lambda^{(N-1)}$ are non-singular;
\item[]
$\lambda^{(0)}=\lambda$, $\lambda^{(N)}=\lambda+\mu$;
\item[]
$\lambda^{(j)}-\lambda^{(j-1)} \in {\mathcal{E}}$, 
$(\lambda^{(j)}, \nu) \in D$\quad for all $1 \le j \le N$.
\end{itemize}
We note that $\lambda^{(0)}, \dots, \lambda^{(N-1)}$ satisfy the regularity assumption~\eqref{eqn:reg_xi}
in Theorem~\ref{thm:24120703},
whereas $\lambda^{(N)}$ does not.
It then follows from the proof of Theorem~\ref{thm:24120703} that we obtain
\begin{equation}
\label{eqn:vanish_tau}
   %[\Pi|_{G'}:\pi]=
   [\Pi_{\lambda^{(0)}}|_{G'}:\pi]
=\cdots=[\Pi_{\lambda^{(N-1)}}|_{G'}:\pi]
                  \le [\Pi_{\lambda^{(N)}}|_{G'}:\pi].  
\end{equation}
On the other hand, 
 since $e_i-e_{i+1} \in \tau(\Pi)$, 
 $\Pi_{\lambda^{(N)}}=\Pi_{\lambda+\mu}=0$.  
Thus, 
 the right-hand side of \eqref{eqn:vanish_tau} is zero, 
 and hence, 
 the theorem is proved.  
\end{proof}

Theorem~\ref{thm:25080811}, when applied to the pair of compact Lie groups $(G, G')=(U(n), U(n-1))$, yields a new proof of the necessity of the interlacing pattern \eqref{eqn:Weylhw} below in Weyl's branching laws, as we shall see in the next section.

\bigskip
%%%%%%%%%%%%%%%%%%%%%%%%%%%%%%%%%%%%%%
\section{Known Examples for $(G, G')=(U(p,q), U(p-1,q))$}
\label{sec:upq}
%%%%%%%%%%%%%%%%%%%%%%%%%%%%%%%%%%%%%%%%
We begin in this section by demonstrating how Theorems \ref{thm:23081404} and \ref{thm:24102601} provide a new perspective in  the interlacing patterns that appear in known examples of branching laws,
 such as Weyl's branching law for finite-dimensional representations regarding the restriction $U(n) \downarrow U(n-1)$
 and the patterns \cite{He} in the Gan--Gross--Prasad conjecture regarding the branching of discrete series representations for the restriction $U(p,q)\downarrow U(p-1,q)$.

The cases in the branching of non-tempered representations for the restriction $GL(n,{\mathbb{R}}) \downarrow GL(n-1,{\mathbb{R}})$
 are more involved, 
 which we will discuss in Section \ref{sec:Spehrep}
 through Section \ref{sec:GHgeneral}, 
 along with the phenomenon of \emph{jumping fences}.  
 We revisit the branching for $U(p,q) \downarrow U(p-1,q)$ by considering
non-tempered representations in Section \ref{sec:upq2}.  

\bigskip
%%%%%%%%%%%%%%%%%%%%%%%%%%%%%%%%%%%%%%%%%%
\subsection{Weyl's Branching Law for $U(n) \downarrow U(n-1)$}
\label{subsec:Weylbranch}
%%%%%%%%%%%%%%%%%%%%%%%%%%%%%%%%%%%%%%%%%%%%%
~~~\newline
We begin by illustrating the concept of \emph{fences} (Definition~\ref{def:int}) with the branching for \emph{finite-dimensional} representations.

%Let $(G,G')=(U(n), U(n-1))$.  
Let $\Kirredrep{G}{x}$ denote the irreducible finite-dimensional representation of $G := U(n)$
 with highest weight $x \in {\mathbb{Z}}_{\ge}^{n}$ in the standard coordinates.  
Similarly, 
let $\Kirredrep {G'}{y}$ denote the irreducible representation of $G'=U(n-1)$ with highest weight $y \in {\mathbb{Z}}_{\ge}^{n-1}$.

According to Weyl’s branching law,
the restriction of an irreducible representation $\Kirredrep{G}{x}$ of $G$ to the subgroup $G'$ contains the irreducible representation $\Kirredrep{G'}{y}$ of $G'$, that is,
\begin{equation}
\label{eqn:Weyl_law}
[\Kirredrep{G}{x}|_{G'}:\Kirredrep{G'}{y}] \ne 0, 
\
\text{ (equivalently,} 
\
[\Kirredrep{G}{x}|_{G'}:\Kirredrep{G'}{y}]=1)
\end{equation}
if and only if the highest weights satisfy the interlacing inequalities:
\begin{equation}
\label{eqn:Weylhw}
   x_1 \ge y_1 \ge x_2 \ge y_2 \ge \cdots \ge x_{n-1} \ge y_{n-1} \ge x_n.  
\end{equation}

This section reinterprets the above classical result from the new perspective of 
\emph{translation for symmetry breaking}, as formulated in Theorems~\ref{thm:23081404} and \ref{thm:24102601}.
To this end, 
 we reformulate the condition (\ref{eqn:Weylhw})
 in terms of the infinitesimal characters. 
 
Recall from \eqref{eqn:rhon} that
\[\rho_n =\frac 1 2 (n-1, \dots, 1-n),
 \quad\text{ and }\quad 
 \rho_{n-1} =\frac 1 2 (n-2, \dots, 2-n).
 \]
Then the ${\mathfrak{Z}}({\mathfrak g}_{\mathbb C})$-infinitesimal character of the $G$-module $\Kirredrep{G}{x}$ and the ${\mathfrak{Z}}({\mathfrak g}'_{\mathbb C})$-infinitesimal character of of the $G'$-module $\Kirredrep{G'}{y}$ are given, respectively, by $\tau \bmod {\mathfrak S}_n$ and $\tau' \bmod {\mathfrak S}_{n-1}$, where
 \[
 \tau:= x + \rho_n
 \quad \text{ and } \quad
  \tau':=y + \rho_{n-1}.
 \]
Thus, the inequality \eqref{eqn:Weylhw} for highest weights is equivalent to 
 the following strict inequality:
\begin{equation}
\label{eqn:Weylint}
   \tau_{1}>\tau_{1}'> \tau_2>\tau_2'> \cdots >\tau_{n-1}>\tau_{n-1}'>\tau_{n}.  
\end{equation}

\medskip
We begin with an observation that translation functors within the same dominant chamber can easily alter the multiplicity $[\Kirredrep{G}{x}|_{G'}:\Kirredrep{G'}{y}]$ in symmetry breaking.
\begin{example}
\label{ex:alter_mult}
Consider $(G,G')=(U(3), U(2))$.
Let 
\[F_1:=\Kirredrep{G}{1,0,0}, 
\quad F_2:=\Kirredrep{G}{1,1,0}, 
\quad  F':=\Kirredrep{G'}{0,0},
\]
that is, $F_1$, $F_2$, and $F'$ are, respectively,
the standard representation $\mathbb C^3$ of $G$, its exterior representation $\Lambda^2 \mathbb C^3$, and the trivial representation of $G'=U(2)$.

The $\mathfrak{Z}(\mathfrak{g}_\mathbb C)$-infinitesimal character of $F_1$ and $F_2$ are given by
\[\tau_1=(2,0,-1) \quad \text{and} \quad \tau_2=(2,1,-1)
\bmod \mathfrak{S}_3.
\]
respectively.
Then the translation functors, as defined in \eqref{eqn:transl_1},
are performed without crossing walls,
that is, both $\tau_1=\tau_2-f_2$ and $\tau_2=\tau_1+f_2$ lie in the same (strict) Weyl chamber, giving
\[
    \phi_{\tau_2}^{\tau_2-f_2}(F_2) \simeq F_1,
    \qquad
 \phi_{\tau_1}^{\tau_1+f_2}(F_1) \simeq F_2.
\]
However, the multiplicities are given as
\[
  [F_1|_{G'}:F']=1, \qquad
  [F_2|_{G'}:F']=0. 
\]
This shows that translation functors can significantly alter the nature of symmetry breaking even inside the Weyl chamber.
\end{example}

On the other hand, Theorems \ref{thm:23081404} and \ref{thm:24102601} are formulated in terms of \emph{fences}, rather than in terms of the usual notion of \emph{walls} for Weyl chambers.

We now explain how these theorems recover the sufficiency of the interlacing property \eqref{eqn:Weylhw} for highest weights (or, equivalently, \eqref{eqn:Weylint} for infinitesimal characters) in a simple and specific case.

To see this, suppose that we are given any $y \in {\mathbb{Z}}_{\ge}^{n-1}$ and any $x_n$ such that $y_{n-1} \ge x_n$.  
We set 
\[
\widetilde x:=(y_1, \dots, y_{n-1}, x_n)\in {\mathbb{Z}}_{\ge}^n.
\]
Then 
 $[\Kirredrep{G}{\widetilde x}|_{G'}:\Kirredrep{G'}{y}]\ne 0$
 because the highest weight vector of $\Kirredrep{G}{\widetilde x}$ generates the irreducible $G'$-submodule $\Kirredrep{G'}{y}$.  

We now apply Theorem
%\ref{thm:23081404} and 
\ref{thm:24102601} to $\pi:=\Kirredrep{G'}{y} \in {\mathcal{M}}(G')$, and consider the translation functors
 for ${\mathcal{M}}(G)$.  
Due to the integral condition
 $\tau_i-\tau_j \in {\mathbb{Z}}$
 for all $1 \le i, j \le n$, 
 the translation 
 \[\tau \rightsquigarrow \tau+ \varepsilon f_i
 \quad
 (\varepsilon =+1 \ \textrm{ or } -1)
 \]
 does not cross any wall of the same Weyl chamber of $G$;
 hence, the translation $\phi_{\tau}^{\tau+\varepsilon f_i}(\Kirredrep{G}{x})$
 is either 0 or irreducible.  
More precisely, 
 \[
 \phi_{\tau}^{\tau+\varepsilon f_i}(\Kirredrep {G}{x}) \simeq \Kirredrep{G}{x +\varepsilon f_i}
 \]
 if $x_i \ne x_{i-\varepsilon}$.  
Therefore, 
 an iterated application of Theorem \ref{thm:24102601} implies
 that 
\[
   [\Kirredrep{G}{x}|_{G'}: \Kirredrep{G'}{y}]\ne 0
\]
as long as the pair $(x + \rho_n, y + \rho_{n-1})$ satisfies \eqref{eqn:Weylint}.

\medskip

We now give a new proof for the necessity of 
the interlacing property \eqref{eqn:Weylhw},
using the vanishing results based on
Theorem~\ref{thm:25080811}.

Let $\Pi$ be an irreducible finite-dimensional representation of $G=U(n)$ with highest weight $x$, 
 and let $\pi$ be an irreducible representation of $G'=U(n-1)$ with highest weight $y$.

Suppose that $(x,y)$ does not satisfy the interlacing property~\eqref{eqn:Weylhw}, 
 or equivalently, 
 that their infinitesimal characters $\lambda=x+ \rho_n$
 and $\nu=y+\rho_{n-1}$ do not satisfy
 the interlacing property~\eqref{eqn:Weylint}.  
This implies 
 that there exists $i$ $(1 \le i \le n-1)$
 such that $\lambda_i \lambda_{i+1}$ appears
 as an adjacent string
 in their interleaving pattern for $(\lambda, \nu)$.  
However, 
 for a finite-dimensional representation $\Pi$, 
 all simple roots constitute its $\tau$-invariant.  
Hence, 
 it follows from Theorem~\ref{thm:25080811}
 that 
\[
   [\Pi|_{G'}: \pi]=0.  
\]
This completes the proof of the reverse implication.

\bigskip
%%%%%%%%%%%%%%%%%%%%%%%%%%%%%%%%%%%%%%%%%%
\subsection{Gan--Gross--Prasad conjecture
 for $U(p,q)\downarrow U(p-1,q)$}
%%%%%%%%%%%%%%%%%%%%%%%%%%%%%%%%%%%%%%%%%%%%%
~~~\newline
In the non-compact setting $(G, G')=(U(p,q), U(p-1, q))$, 
 an analogous interleaving property to \eqref{eqn:Weylint} arises, 
 which we now recall.  

Let $G=U(p,q)$ and $K=U(p) \times U(q)$.  
The complexifications are given
 by $G_{\mathbb{C}}=GL(p+q, {\mathbb{C}})$
 and $K_{\mathbb{C}}=GL(p, {\mathbb{C}}) \times GL(q, {\mathbb{C}})$, 
 respectively.  
Let $W_G ={\mathfrak{S}}_{p+q}$
 and $W_K ={\mathfrak{S}}_{p} \times {\mathfrak{S}}_{q}$
 be the Weyl groups for the root systems $\Delta({\mathfrak{g}}_{\mathbb{C}})$
 and $\Delta({\mathfrak{k}}_{\mathbb{C}})$, 
 respectively.  
We define
\[
  W^{\mathfrak{k}}:=
\{
   w \in W_G: \text{$w \nu$ is $\Delta^+({\mathfrak{k}})$-dominant 
for any $\Delta({\mathfrak{g}})$-dominant $\nu$}
\}.  
\]
This means that 
$w \in W^{\mathfrak{k}}$
 if $w \in W_G={\mathfrak{S}}_{p+q}$ satisfies 
$
  w^{-1}(i) < w^{-1}(j)
$
whenever $1\le i < j \le p$ or $p+1 \le i < j \le p+q$.

Then $W^{\mathfrak{k}}$ is the set of complete representatives of 
 $W_K \backslash W_G$, 
 which parametrizes closed $K_{\mathbb{C}}$-orbits
 on the full flag variety of $G_{\mathbb{C}}$.  
We further define
\[
 C_+:=\,\{x \in {\mathbb{R}}^{p+q}: x_1> \cdots >x_{p+q}\}.  
\]

For $w \in W$, 
the set 
$
  w C_+
$ defines an interlacing pattern in ${\mathbb{R}}_{>}^p \times {\mathbb{R}}_{>}^q$:
\[
w C_+=
\{x \in {\mathbb{R}}^{p+q}: x_{i_1} > x_{i_{2}} > \cdots > x_{i_{p+q}}\}.
\]

For $\varepsilon \in \frac 1 2 {\mathbb{Z}}$, 
 we define
\begin{align}
\notag
  {\mathbb{Z}}_{\varepsilon}
  &:=
  {\mathbb{Z}}+\varepsilon.  
\\
\notag
  ({\mathbb{Z}}_{\varepsilon})_{\operatorname{reg}}^{p+q}
  &:=\{x \in ({\mathbb{Z}}_{\varepsilon})^{p+q}:x_i \ne x_j \,\,\text{ if $i \ne j$}\}, 
\\
\notag
 ({\mathbb{Z}}_{\varepsilon})_{>}^{p+q}
  &:=\{x \in ({\mathbb{Z}}_{\varepsilon})^{p+q}: x_1 > \cdots > x_{p+q}\}, 
\\
\label{eqn:Zint}
({\mathbb{Z}}_{\varepsilon})_{>}^{p, q}
  &:=\{x \in ({\mathbb{Z}}_{\varepsilon})_{\operatorname{reg}}^{p+q}:
x_1>\cdots>x_p \text{ and } x_{p+1} > \cdots > x_{p+q}\}.  
\end{align}

Let $\operatorname{Disc}(G)$ denote
 the set of discrete series representations of $G$, 
 which is parametrized for $G=U(p,q)$ as follows:
 let $\varepsilon:=\frac 1 2(p+q-1)$.  
\begin{equation*}
   \operatorname{Disc}(G)
   \simeq
   ({\mathbb{Z}}_{\varepsilon})_{>}^{p, q}
  \simeq
  ({\mathbb{Z}}_{\varepsilon})_{>}^{p+q} \times W^{\mathfrak{k}}, 
\quad
  \Pi_{\lambda}=\Pi^w(\lambda^+)
  \leftrightarrow
  \lambda
  \leftrightarrow
   (\lambda^+, w), 
\end{equation*}
where $\lambda = w \lambda^+$.  
The geometric meaning of $w$ is that the support of the localization of the $({\mathfrak{g}}, K)$-module $\Pi^w(\lambda^+)_K$
 via the Beilinson--Bernstein correspondence using ${\mathcal{D}}$-modules
 is the closed $K_{\mathbb{C}}$-orbit that corresponds to $w$, 
 while $\lambda$ is the Harish-Chandra parameter, 
 in particular,
 \[
 \lambda \equiv \lambda^+ \mod {\mathfrak{S}}_n
 \] is its ${\mathfrak{Z}}({\mathfrak{g}}_{\mathbb{C}})$-infinitesimal character.

Let $G=U(p,q)$ and $G'=U(p-1,q)$.  
We set
\[
   \text{$\varepsilon=\frac 1 2(p+q-1)$ \quad and \quad $\varepsilon'=\frac 1 2(p+q-2)$.}  
\]
The classification of a pair $(\Pi, \pi) \in \operatorname{Disc}(G) \times \operatorname{Disc}(G')$
 such that $[\Pi^{\infty}|_{G'}:\pi^{\infty}] \ne 0$ can be described by the parameters  
\[
   (\lambda^+, \nu^+) \in ({\mathbb{Z}}_{\varepsilon}^{p+q})_{>} \times ({\mathbb{Z}}_{\varepsilon'}^{p+q-1})_{>}
\]
 such that $[\Pi^w(\lambda^+)|_{G'}:\pi^{w'}(\nu^{+})] \ne 0$
 for each $(w, w') \in W^{\mathfrak{k}}$.

He \cite{He} determined all such pairs $(\lambda^+, \nu^+)$, 
 relying on the combinatorics 
 of the theta correspondence.  
In his theorem, 
 certain interleaving patterns
 of $(\lambda^+, \nu^+)$ appears.  
The following theorem explains an intrinsic reason for these interleaving patterns, 
from a different perspective, 
 using \lq\lq{translation functor for symmetry breaking}\rq\rq,  
 and reveals why interleaving patterns occur in the context of the Gan--Gross--Prasad conjecture.

For an interleaving pattern $D \in {\mathfrak{P}(\mathbb{R}^{p+q,p+q-1})}$
 (Definition \ref{def:int}), 
 we set
\[
  D_{\operatorname{int}}:=
  D \cap ({\mathbb{Z}}_{\varepsilon}^{p+q} \times {\mathbb{Z}}_{\varepsilon'}^{p+q-1}).  
\]

\begin{theorem}
[$U(p, q) \downarrow U(p-1, q)$]
\label{thm:24102706}
Fix $w \in W^{{\mathfrak{k}}}$, $w' \in W^{{\mathfrak{k}}'}$
 and an interleaving pattern  $D \in {\mathfrak{P}(\mathbb{R}^{p+q,p+q-1})}$.  
Then the following two conditions
 on the triple $(w, w', D)$
 are equivalent:
\newline
{\rm{(i)}}\enspace
$[\Pi^w(\lambda^+)|_{G'}:\pi^{w'}(\nu^+)] \ne 0$
 for some $(\lambda^+, \nu^+) \in D_{\operatorname{int}}$, 
\newline
{\rm{(ii)}}\enspace
$[\Pi^w(\lambda^+)|_{G'}:\pi^{w'}(\nu^+)] \ne 0$
 for all $(\lambda^+, \nu^+) \in D_{\operatorname{int}}$.  
\end{theorem}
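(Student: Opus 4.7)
\emph{Proof plan.} The strategy is to apply Theorem~\ref{thm:24120703} and a $G'$-side analog of it to propagate the non-vanishing of the multiplicity across the region $D_{\operatorname{int}}$. The implication (ii) $\Rightarrow$ (i) is trivial provided $D_{\operatorname{int}} \ne \emptyset$, which holds because $D$ is a full-dimensional open region in ${\mathbb{R}}^{p+q} \times {\mathbb{R}}^{p+q-1}$ and so contains infinitely many points of the shifted lattice ${\mathbb{Z}}_{\varepsilon}^{p+q} \times {\mathbb{Z}}_{\varepsilon'}^{p+q-1}$ (e.g.\ by scaling any rational witness in $D$). Thus the content of the theorem is (i) $\Rightarrow$ (ii).

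Fix a witness $(\lambda_0^+, \nu_0^+) \in D_{\operatorname{int}}$ for (i). Since $\lambda_0^+ \in ({\mathbb{Z}}_{\varepsilon})_{>}^{p+q}$ automatically satisfies $(\lambda_0^+)_i - (\lambda_0^+)_{i+1} \ge 1$, the non-singularity hypothesis of Theorem~\ref{thm:24120703} is met. I would form the unique coherent family $\{\Pi_\mu\}_{\mu \in \lambda_0^+ + {\mathbb{Z}}^{p+q}}$ starting from $\Pi^w(\lambda_0^+)$, and use the structural fact that, inside the strictly dominant chamber $({\mathbb{Z}}_{\varepsilon})_{>}^{p+q}$, coherent continuation of a discrete series yields the discrete series with the same $K_{\mathbb{C}}$-orbit label, i.e.\ $\Pi_\mu \simeq \Pi^w(\mu)$. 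This follows from the Beilinson--Bernstein realization, in which coherent continuation alters the twist but preserves the closed $K_{\mathbb{C}}$-orbit parametrized by $w$. Applying Theorem~\ref{thm:24120703} with $\pi := \pi^{w'}(\nu_0^+)$ then propagates the non-vanishing from $\lambda_0^+$ to every $\lambda^+$ with $(\lambda^+, \nu_0^+) \in D_{\operatorname{int}}$. To vary $\nu^+$ as well, I would run the same argument on the $G'$-side: build the coherent family of $G'$-representations through $\pi^{w'}(\nu_0^+)$ using the natural $G'$-counterparts of Theorems~\ref{thm:23081404} and~\ref{thm:24102601} (translation functors now acting on the $\pi$-factor), and invoke the corresponding analog of Theorem~\ref{thm:24120703} to obtain constancy in the $\nu^+$-direction. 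Composing the two propagations covers all of $D_{\operatorname{int}}$ and yields (ii).

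The main obstacle will be establishing the $G'$-side analog of Theorem~\ref{thm:24120703} in a form directly applicable here. While the translation functor framework is symmetric in principle, Theorems~\ref{thm:23081404} and~\ref{thm:24102601} are phrased as translating $\Pi$ against a fixed $\pi$, and a clean transposition requires checking that the corresponding non-crossing condition, now read on $\nu^+$ rather than $\xi$, recovers exactly the fence structure of $D$ from the $G'$-perspective (with the roles of $\tau_i$ and $\tau_j'\pm\frac{1}{2}$ interchanged). A smaller but genuine technical point is the identification $\Pi_\mu \simeq \Pi^w(\mu)$ within a single Weyl chamber: this is standard for discrete series of linear real reductive groups via Schmid's coherent continuation theorem or via ${\mathcal{D}}$-modules, but it is precisely what lets the abstract Theorem~\ref{thm:24120703} specialize into a statement about the specific discrete series pair $(\Pi^w(\lambda^+), \pi^{w'}(\nu^+))$ indexed by $(w,w')$.
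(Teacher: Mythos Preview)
Your strategy is essentially aligned with the paper's own sketch: both rely on iterated application of the translation theorems (Theorems~\ref{thm:23081404} and~\ref{thm:24102601}, packaged in Theorem~\ref{thm:24120703}) to propagate non-vanishing across $D_{\operatorname{int}}$, and your identification $\Pi_\mu \simeq \Pi^w(\mu)$ within the dominant chamber via coherent continuation is correct and exactly what is needed to specialize Theorem~\ref{thm:24120703} to the discrete series labeled by $w$.

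Where your plan and the paper's sketch diverge is in the $\nu^+$-direction. You propose a direct $G'$-side analog of Theorem~\ref{thm:24120703}, translating $\pi$ rather than $\Pi$, and you correctly flag this as the main obstacle. The paper instead names a \emph{spectral sequence for cohomological parabolic induction} as the extra ingredient (details deferred to the forthcoming~\cite{HKS}). These are genuinely different devices: Theorems~\ref{thm:23081404} and~\ref{thm:24102601} are built on tensoring $\Pi$ with the standard $G$-module $\mathbb{C}^n$ and exploiting the branching $\mathbb{C}^n|_{G'}\simeq\mathbb{C}^{n-1}\oplus\mathbb{C}$, so a naive transposition to translate $\pi$ by $\mathbb{C}^{n-1}$ does not come from the same mechanism and requires its own argument. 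Your proposal is plausible but is a real gap rather than a formality; the paper's route through cohomological induction is what actually closes it.

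You also omit a point the paper singles out as \emph{the} key structural fact: the parity condition $\lambda_i-\nu_j\in\mathbb{Z}+\tfrac12$, automatic for $(\lambda^+,\nu^+)\in D_{\operatorname{int}}$ since $\varepsilon-\varepsilon'=\tfrac12$. This parity is exactly what makes the obstruction sets $\{\tau_j'\pm\tfrac12\}$ in Theorems~\ref{thm:23081404} and~\ref{thm:24102601} coincide with the fences of $D$, so that translation is unobstructed \emph{within} $D_{\operatorname{int}}$ and blocked precisely at its boundary. Making this explicit would sharpen your argument and explain why the interlacing pattern, and not merely the Weyl chamber, governs the multiplicity.
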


Theorem \ref{thm:24102706} is derived from the iterated application
 of Theorems~\ref{thm:23081404} and~\ref{thm:24102601}, 
 along with the use of a spectral sequence
 for cohomological parabolic induction.

\begin{example}[Holomorphic Discrete Series, Thm.~8.11 in \cite{xkProg2007}]
For $\nu \in ({\mathbb{Z}}+\frac{p+1-1}{2})^{p+q}$
subject to the condition
\[\nu_p > \cdots> \nu_{p+q-1} > \nu_1 > \cdots >\nu_{p-1},
\]
 let $\pi(\nu)$ denote the corresponding holomorphic discrete series representation of $G'$.  
Take $\widetilde \lambda \in (\mathbb{Z}+\frac{p+q+1}{2})^{p+q}$
 such that 
\[
 \widetilde \lambda_{p+j}:=\nu_{p+j-1}-\frac 1 2\quad
(1 \le j \le q),
\]
\[
\widetilde {\lambda_1} > \nu_1 > \cdots > \nu _{p-1} > \widetilde {\lambda_p}.  
\]
For this pair $(\tilde{\lambda}, \nu)$, one readily sees that $\pi(\nu)$ occurs 
in $\Pi(\widetilde \lambda)|_{G'}$
 as the \lq\lq{bottom layer}\rq\rq.  
 
For a general pair $(\lambda, \nu)$
 an iterated application of Theorems \ref{thm:23081404} and \ref{thm:24102601}
 implies
 that $[\Pi(\lambda)|_{G'}:\pi(\nu)]\ne 0$ whenever $\lambda \in {\mathbb{Z}}_{\varepsilon}^{p+q}$ satisfies the same interlacing condition
\[
   \nu_p>\lambda_{p+1}>\cdots>\nu_{p+q-1}>\lambda_{p+q}>
   \lambda_1>\nu_1>\cdots>\nu_{p-1}>\lambda_p.  
\]
\end{example}

\bigskip
%%%%%%%%%%%%%%%%%%%%%%%%%%%%%%%%%%%%%%%%%%%%
\section{Branching of Some Special Representations 
 for $GL(2m,{\mathbb{R}}) \downarrow GL(2m-1,{\mathbb{R}})$}
\label{sec:Spehrep}
In this section, 
 we explore an application 
 of Theorem \ref{thm:24120703}
 to a family
 of non-tempered representations
 of $G=GL(2m, {\mathbb{R}})$, 
 see \cite{Sp}, 
 when restricted to the subgroup $G'=GL(2m-1, {\mathbb{R}})$. A detailed proof can be found in  \cite{xks25b}. 

A key aspect
 in applying these theorems is a parity condition 
 that $\lambda_i-\nu_j \in {\mathbb{Z}}+\frac 1 2$
 for every $1 \le i \le p+q$
 and $1 \le j \le p+q-1$.  

We shall see  in Section~\ref{subsec:jump}
 that a phenomenon of \lq\lq{jumping the fences of interlacing patterns}\rq\rq\ naturally arises for the parity conditions on $\lambda$ and $\nu$
 such that $\lambda_i - \nu_j \not \in {\mathbb{Z}}+ \frac 1 2$.

\medskip
\subsection{A Family of Representations of $GL(2m, {\mathbb{R}})$}
For $\varepsilon \in \{0,1\}$, 
 let 
\[
   \Pi \colon ({\mathbb{Z}}+\varepsilon)^{2m} \to {\mathcal{V}}(G)
\]
be the coherent family of smooth representations 
 such that $\Pi(\lambda)$ is the smooth representation
 of a special unitary representation
 studied in \cite{Sp}, 
 sometimes referred to as the {\it{$\ell$-th Speh representation}}, 
 if
\[
  \lambda= \frac 1 2 (\ell, \dots, \ell, -\ell, \dots, -\ell)
           +(\rho_m, \rho_m)
\quad
\text{for $1 \le \ell$}.  
\]
Here, we recall from \eqref{eqn:rhon} 
$
   \rho_m=(\frac{m-1}2, \dots, \frac{1-m}2)
$.
The parity $\varepsilon$ and $\ell$ is related by $\ell+2 \varepsilon +m+1 \in 2{\mathbb{Z}}$.

There is a $\theta$-stable parabolic subalgebra
 ${\mathfrak{q}}={\mathfrak{l}}_{\mathbb{C}}+{\mathfrak{u}}$
 of ${\mathfrak{g}}_{\mathbb{C}} = {\mathfrak{gl}}(2m,{\mathbb{C}})$, 
 unique up to an inner automorphism 
 of $G=GL(2m, {\mathbb{R}})$, 
 such that the real Levi subgroup $N_G({\mathfrak{q}})$
 is isomorphic to $L:=GL(m,{\mathbb{C}})$.  
The underlying $({\mathfrak{g}}, K)$-module of $\Pi(\lambda)$
 is obtained by a cohomological parabolic induction from an irreducible finite-dimensional representation $F_{\lambda}$ of ${\mathfrak{q}}$, 
 on which the unipotent radical ${\mathfrak{u}}$ acts trivially and $L$ acts by 
\[
  \Kirredrep{GL(m,{\mathbb{C}})}{\lambda'-\rho_m} 
  \otimes 
  \overline{\Kirredrep{GL(m,{\mathbb{C}})}{\lambda''-\rho_m}}.
\]
Here $\lambda=(\lambda', \lambda'') \in ({\mathbb{Z}}+\varepsilon)^m \times ({\mathbb{Z}}+\varepsilon)^m$.

The representation $\Pi({\lambda})$ of $G$ is irreducible
 if 
\[
   \lambda_1 > \lambda_2 > \cdots > \lambda_{2m}, 
\]
and is unitarizable
 if 
$
   \lambda_1= \cdots =\lambda_m
 =-\lambda_{m+1}=\cdots=-\lambda_{2m}.
$

\medskip
\subsection{A Family of Representations of $GL(2m-1,{\mathbb{R}})$}
~~~\newline
Let $L' := GL(1,{\mathbb{R}}) \times GL(m-1,{\mathbb{C}})$
be a subgroup of $G' := GL(m-1,{\mathbb{R}})$.  
For 
\[
   \nu \equiv (\nu', \nu_m, \nu'') \in ({\mathbb{Z}}+\varepsilon + \tfrac{1}{2})_{>}^{m-1} 
   \times {\mathbb{C}} 
   \times ({\mathbb{Z}}+\varepsilon + \tfrac{1}{2})_{>}^{m-1},
\]
and $\kappa \in \{0,1\}$, 
let $F_{\kappa}'(\nu)$ denote the irreducible finite-dimensional $L'$-module
given by 
\[
   F_{\kappa}'(\nu) = \chi_{\nu_m, \kappa} \boxtimes W_{\nu', \nu''},
\]
where
\begin{align*}
  \chi_{\nu_m, \kappa}(x)
    &:= |x|^{\nu_m} (\operatorname{sgn} x)^{\kappa},
    \quad \text{for } x \in GL(1, {\mathbb{R}}) \simeq {\mathbb{R}}^{\times}, \\[4pt]
  W_{\nu', \nu''}
    &:= \Kirredrep{GL(m-1,{\mathbb{C}})}{\nu'-\rho_{m-1}}
        \otimes
        \overline{\Kirredrep{GL(m-1,{\mathbb{C}})}{\nu''-\rho_{m-1}}}.
\end{align*}

There exists a $\theta$-stable parabolic subalgebra ${\mathfrak{q}}'={\mathfrak{l}}_{\mathbb{C}}'+{\mathfrak{u}}'$
 of  ${\mathfrak{g}}_{\mathbb{C}}'={\mathfrak{gl}}(2m-1, {\mathbb{C}})$, 
 unique up to an inner automorphism 
 of $G'=GL(2m-1, {\mathbb{R}})$, 
 such that the real Levi subgroup $N_{G'}({\mathfrak{q}}')$ is isomorphic to $L'$.  
Let $\pi_{\kappa}(\nu)$ denote the smooth admissible representation of $G'$
 whose underlying $({\mathfrak{g}}', K')$-module is
 isomorphic to the cohomological parabolic induction from the irreducible 
 finite-dimensional representation $F_{\kappa}'(\nu)$.  
In our normalization, 
 $\nu \mod \mathfrak{S}_{2m-1}$ coincides
 the ${\mathfrak{Z}}({\mathfrak{g}}_{\mathbb{C}}')$-infinitesimal character 
 of $\pi_{\kappa}(\nu)$.  
The $({\mathfrak{g}}', K')$-module $\pi_{\kappa}(\nu)$ is unitarizable
 if $$\nu_m \in \sqrt{-1}{\mathbb{R}}, \ 
 \nu'=c{\bf{1}}_{m-1}+\rho_{m-1}, \
 \text{ and } \nu''=-c{\bf{1}}_{m-1}+\rho_{m-1}$$
 for some $c \in \frac 1 2 {\mathbb{N}}$.

We write simply $\pi(\nu)$ for $\pi_{\kappa}(\nu)$
 when $\nu_m \in {\mathbb{Z}}$
 and when  
\begin{equation}
\label{eqn:24122003}
   \kappa + \nu_m + 2 \varepsilon +m-1 \in 2{\mathbb{Z}}.  
\end{equation}

\medskip
\subsection{Branching for $GL(2m,{\mathbb{R}})\downarrow GL(2m-1,{\mathbb{R}})$}
~~~\newline
In the same spirit as the reinterpretation 
 of Weyl's classical branching laws from the perspective
 of \lq\lq{translation for symmetry breaking}\rq\rq, 
 as explained 
 in Section \ref{subsec:Weylbranch}, 
 we derive the following theorem 
 starting from a \lq\lq{simpler case}\rq\rq, 
 that is, 
 when 
\begin{align}
\label{eqn:24122004}
  &\lambda_{1} > \nu_1, \quad \nu_{2m-1} > \lambda_{2m}, \quad
   \lambda_1+\lambda_{2m} = \nu_m,
\\
\notag  
  &\lambda_{i+1}= \nu_i-\frac 1 2\,\,
  (1 \le i \le m-1), \,\,\,
  \lambda_i=\nu_i+\frac 1 2
  \,\,
  (m+1 \le i \le 2m-1).  
\end{align}

We note
 that such $(\lambda,\nu)$ lies in the interleaving pattern:
\begin{equation}
\label{eqn:lmdnuilp}
   \lambda_1 > \nu_1 > \lambda_2 > \cdots > \nu_{m-1} >\lambda_m
  > 
  \lambda_{m+1}>\nu_{m+1} > \cdots > \nu_{2m-1} >\lambda_{2m}.  
\end{equation}

\begin{theorem}
[{\cite{xks25b}}]
\label{thm:24120248}
Let $(G,G')=(GL(2m, {\mathbb{R}}), GL(2m-1, {\mathbb{R}}))$.  
Let $\varepsilon \in \{0,\frac 1 2\}$, 
$\lambda \in ({\mathbb{Z}}+\varepsilon)_{>}^{2m}$, 
 and $\nu \in ({\mathbb{Z}}+\varepsilon+\frac 1 2)_{>}^{m-1} 
         \times {\mathbb{Z}} \times 
        ({\mathbb{Z}}+\varepsilon+\frac 1 2)_>^{m-1}$
 satisfying
$
  \nu_{m-1}> \nu_m > \nu_{m+1}$
 and 
$
  \nu_{m-1}- \nu_{m+1} \ne 1$.  

If $(\lambda,\nu)$ satisfies \eqref{eqn:lmdnuilp}, 
 then 
\[
   [\Pi({\lambda})|_{G'}:\pi(\nu)]=1.  
\]
\end{theorem}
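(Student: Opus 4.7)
The plan is to prove Theorem~\ref{thm:24120248} in three stages: establish non-vanishing of the multiplicity in a distinguished ``base case'' satisfying \eqref{eqn:24122004}, propagate the non-vanishing to every $(\lambda,\nu)$ satisfying \eqref{eqn:lmdnuilp} via Theorem~\ref{thm:24120703}, and finally invoke the Sun--Zhu multiplicity-one theorem \cite{xsunzhu} to conclude that the value equals $1$.

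For the base case, I would fix a pair $(\widetilde\lambda,\nu)$ satisfying \eqref{eqn:24122004}; any such pair automatically satisfies the interlacing pattern \eqref{eqn:lmdnuilp}. The shift equalities $\widetilde\lambda_{i+1}=\nu_i-\tfrac12$ for $1\le i\le m-1$, $\widetilde\lambda_i=\nu_i+\tfrac12$ for $m+1\le i\le 2m-1$, together with $\widetilde\lambda_1+\widetilde\lambda_{2m}=\nu_m$, are exactly the numerical conditions under which the $\theta$-stable parabolic ${\mathfrak{q}}'\subset{\mathfrak{g}}'_{\mathbb{C}}$ defining $\pi(\nu)$ can be chosen inside ${\mathfrak{q}}\cap{\mathfrak{g}}'_{\mathbb{C}}$ with ${\mathfrak{l}}'\subset{\mathfrak{l}}$ and ${\mathfrak{u}}'\subset{\mathfrak{u}}$, and the inducing $L'$-datum $F'_\kappa(\nu)$ occurs as a summand of the restriction $F_{\widetilde\lambda}|_{L'}$. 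The Vogan--Wallach bottom-layer map associated with this compatible pair $({\mathfrak{q}},{\mathfrak{q}}')$ then provides a nonzero $G'$-homomorphism $\Pi(\widetilde\lambda)|_{G'}\to\pi(\nu)$, so that $[\Pi(\widetilde\lambda)|_{G'}:\pi(\nu)]\ne 0$.

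Next, set $\xi:=\widetilde\lambda$. The interlacing pattern \eqref{eqn:lmdnuilp} forces $\xi_i-\xi_{i+1}\ge 1$ for every $i$, so $\xi$ is regular and Theorem~\ref{thm:24120703} is applicable. Let $\Pi\colon\xi+\mathbb{Z}^{2m}\to\mathcal{V}(G)$ be the unique coherent family with $\Pi_\xi=\Pi(\widetilde\lambda)$; by uniqueness of coherent continuation on a regular Weyl chamber, $\Pi_\lambda$ coincides with the Speh-type representation $\Pi(\lambda)$ for every non-singular $\lambda\in\xi+\mathbb{Z}^{2m}$ in the same Weyl chamber as $\xi$. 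Both $(\widetilde\lambda,\nu)$ and the given $(\lambda,\nu)$ lie in the interlacing pattern $D$ cut out by \eqref{eqn:lmdnuilp}, so Theorem~\ref{thm:24120703} yields
\[
[\Pi(\lambda)|_{G'}:\pi(\nu)]=[\Pi(\widetilde\lambda)|_{G'}:\pi(\nu)]\ne 0,
\]
and the Sun--Zhu multiplicity-freeness sharpens this to the desired equality $=1$.

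The main obstacle is the base case: one must not only exhibit the compatible pair $({\mathfrak{q}},{\mathfrak{q}}')$ and the inducing-datum coincidence, but also verify that the resulting bottom-layer map is genuinely nonzero rather than identically zero, since $\Pi(\widetilde\lambda)$ and $\pi(\nu)$ are both non-tempered. A secondary subtlety lies in the fence analysis underlying Theorem~\ref{thm:24120703}: the middle coordinate $\nu_m\in\mathbb{Z}$ has a parity different from $\nu_j\in\mathbb{Z}+\varepsilon+\tfrac12$ for $j\ne m$, and the hypothesis $\nu_{m-1}-\nu_{m+1}\ne 1$ is what guarantees that $\nu_m$ has genuine room strictly between $\nu_{m+1}$ and $\nu_{m-1}$ and that no coincidence $\lambda_i=\nu_m\pm\tfrac12$ is forced as $\lambda$ varies within $D$; this is precisely the non-singularity condition required to iterate Theorems~\ref{thm:23081404} and \ref{thm:24102601} inside the proof of Theorem~\ref{thm:24120703}.
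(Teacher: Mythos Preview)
Your three-stage outline—base case at \eqref{eqn:24122004}, propagation via Theorem~\ref{thm:24120703}, then Sun--Zhu to pin down the value $1$—is exactly the strategy the paper sketches (with full details deferred to \cite{xks25b}).

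There is, however, a genuine gap in your propagation step. The pattern \eqref{eqn:lmdnuilp} does not mention $\nu_m$, whereas the interlacing pattern $D$ to which Theorem~\ref{thm:24120703} applies must be a total order on the \emph{full} infinitesimal-character data and therefore also fixes the position of $\nu_m$ relative to $\lambda_m,\lambda_{m+1}$. At the base point one always has $\widetilde\lambda_m=\nu_{m-1}-\tfrac12>\nu_m>\nu_{m+1}+\tfrac12=\widetilde\lambda_{m+1}$, but a target $(\lambda,\nu)$ satisfying \eqref{eqn:lmdnuilp} may well have $\nu_m>\lambda_m$ or $\lambda_{m+1}>\nu_m$, i.e.\ lie in a \emph{different} $D$. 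When $\varepsilon=0$ this is harmless: $\lambda_i\in\mathbb Z$ while $\nu_m\pm\tfrac12\in\mathbb Z+\tfrac12$, so the fence at $\nu_m$ is vacuous and one may go back to Theorems~\ref{thm:23081404} and \ref{thm:24102601} directly to jump across it. When $\varepsilon=\tfrac12$, however, all $\nu_j$ (including $\nu_m$) lie in $\mathbb Z$ and $\lambda_i\in\mathbb Z+\tfrac12$, so the fence $\lambda_i=\nu_m\pm\tfrac12$ is genuinely hit during any one-step crossing, and a single invocation of Theorem~\ref{thm:24120703} from the base point \eqref{eqn:24122004} does not reach every target. Your final paragraph misdiagnoses both this issue and the role of the hypothesis $\nu_{m-1}-\nu_{m+1}\ne 1$: the latter is precisely what forces $\widetilde\lambda_m>\widetilde\lambda_{m+1}$, so that $\widetilde\lambda$ is regular and $\Pi(\widetilde\lambda)$ is irreducible; it does not dissolve the $\nu_m$-fence.
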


\bigskip

%%%%%%%%%%%%%%%%%%%%%%%%%%%%%%%%%%%%%%%%%%%%%%%%%%%
\section{A Non-Vanishing Theorem for Period Integrals}
\label{sec:period}
~~~
%%%%%%%%%%%%%%%%%%%%%%%%%%%%%%%%%%%%%%%%%%%%%%%%%%%
In general, proving the non-vanishing of symmetry breaking is a difficult problem. 
However, thanks to the general results in Theorems~\ref{thm:23081404} and~\ref{thm:24102601} 
(see also Theorem~\ref{thm:24120703}), 
it suffices to consider representations for only specific parameter pairs $(\lambda, \nu)$ when $(\mathfrak{g}_{\mathbb C}, \mathfrak g'_{\mathbb C})=(\mathfrak{gl}(n, \mathbb C), \mathfrak{gl}(n-1, \mathbb C))$.  

We therefore focus on developing a method for detecting the existence of a non-zero symmetry breaking operator for such representations.
To this end, we consider the situation where a pair of reductive groups $G' \subset G$ induces a natural embedding of their symmetric spaces $G'/H' \subset G/H$ and restrict $H$-distinguished representations $\Pi \in \operatorname{Irr}(G)$ to the subgroup $G'$.
The classification of triples $G' \subset G \supset H$ for which the restriction $\Pi|_{G'}$ has the \emph{uniformly bounded multiplicity property} has been recently accomplished in \cite{xk22}.

In this section, we propose a method for detecting the existence of a non-zero symmetry breaking operator when $\Pi$ is a discrete series representation for $G/H$, using the idea of period integrals for the pair of reductive symmetric pairs.

The main result of this section is Theorem \ref{thm:24012120}, which provides a sufficient condition for the non-vanishing 
of period integrals in the general setting where $G \supset G'$ are {\it{arbitrary pairs}} of real reductive Lie groups.  

These results also apply to representations that are not necessarily tempered.

\medskip

\subsection{Discrete Series Representations for $X=G/H$}
~~~\newline
Let $(X, \mu)$ be a measure space and suppose that a group $G$ acts on $X$ in a measure-preserving fashion.  
Then, 
 there is a natural unitary representation of $G$ on the Hilbert space $L^2(X)$
 of square-integrable functions.  

An irreducible unitary representation $\Pi$ is called
 a {\it{discrete series representation}}
 for $X$, 
 if $\Pi$ can be realized in a closed subspace of $L^2(X)$.  
Let $\operatorname{Disc}(X)$
 denote the set of discrete series representations for $X$.  
Then $\operatorname{Disc}(X)$ is a (possibly, empty) subset of the unitary dual $\widehat G$ of $G$.

For $\Pi \in \widehat G$, 
 let $\Pi^{\vee}$ (resp.\ $\overline \Pi$)
 denote the contragredient (resp.\ complex conjugate) representation of $\Pi$. 
Then $\Pi^{\vee}$ and $\overline \Pi$ are unitarily equivalent representations.  
Moreover, 
 the set $\operatorname{Disc}(X)$ is closed under taking contragredient representations.

\subsection{Reductive Symmetric Spaces}
\label{subsec:symmsp}
~~~\newline
Let $G$ be a linear real reductive Lie group, $\sigma$ be an involutive automorphism of $G$, 
 and $H$ an open subgroup of $G^{\sigma}:=\{g \in G: \sigma g = g\}$.  
The homogeneous space $X=G/H$ is called a {\it{reductive symmetric space}}.

We take a Cartan involution $\theta$ of $G$
 that commutes with $\sigma$.  
Let $K$ be the corresponding maximal compact subgroup of $G$.  
Flensted-Jensen \cite{FJ80}
 and Matsuki--Oshima \cite{MO84} proved
 that $\operatorname{Disc}(G/H)\ne \emptyset$
 if and only if 
\begin{equation}
 \operatorname{rank} G/H=\operatorname{rank} K/H \cap K,
 \label{eqn:FJrank}
 \end{equation}
 generalizing 
 the Harish-Chandra rank condition \cite{HC66},
 $\operatorname{rank}G=\operatorname{rank} K$, 
 for the existence of discrete series representations of the group manifold $G$.

In contrast to Harish-Chandra’s discrete series representations for group manifolds, not every $\Pi \in \operatorname{Disc}(G/H)$ is tempered. There exist representations $\Pi \in \operatorname{Disc}(G/H)$ that are tempered (in the sense of Harish-Chandra) if and only if the centralizer $Z_G({\mathfrak{t}})$ is amenable, where $\mathfrak{t}$ is a Cartan subalgebra associated with the compact symmetric pair $(K, H \cap K)$. However, this condition does not imply that \emph{all} representations in $\operatorname{Disc}(G/H)$ are tempered (see \cite[Sect.~8.5]{BK21}).

Similarly, in contrast to Harish-Chandra's discrete series representations for group manifolds, 
 not every $\Pi \in \operatorname{Disc}(G/H)$ has
 a non-singular ${\mathfrak{Z}}({\mathfrak{g}}_{\mathbb{C}})$-infinitesimal
 character.  
This implies that if we realize the underlying $({\mathfrak{g}}, K)$-module $\Pi_K$
 via cohomological parabolic induction, 
 the parameter is not necessarily in \lq\lq{good range}\rq\rq\
 for this induction.  
In case the parameters are in the \lq\lq{good range}\rq\rq, 
 the minimal $K$-type  
 of $\Pi \in \operatorname{Disc}(G/H)$, 
 which we denote by $\mu(\Pi) \in \widehat K$, can be computed in a straight manner.  

\bigskip

\subsection{Period Integrals: Generalities}
\label{subsec:period}
~~~\newline
Let $X=G/H$ be a reductive symmetric space, 
 as in Section \ref{subsec:symmsp}.  
We now consider a pair $Y \subset X$
 of symmetric spaces as below.  
Suppose that $G'$ is a reductive subgroup of $G$, 
 stable under the involutions $\sigma$ and $\theta$ of $G$.  
Let $H':=H \cap G'$.  
Then $Y:=G'/H'$ is also a reductive symmetric space, 
 and there is a natural inclusion $\iota \colon Y \hookrightarrow X$, 
 which is $G'$-equivariant.

Let $\Pi$ be a discrete series representation for $X=G/H$.  
By convention, 
 we identify $\Pi$
 with its corresponding representation space in $L^2(X)$.  
Then, 
 the smooth representation $\Pi^{\infty} \in {\mathcal{M}}(G)$ is realized as a subspace of $(L^2 \cap C^{\infty})(X)$.

The first step is to prove the convergence of period integrals in the general setting where neither $\Pi \in \widehat{G}$ nor $\pi \in \widehat{G'}$ is assumed to be tempered.
\begin{theorem}[\cite{xks25}]
\label{thm:period}
For any $\Pi \in \operatorname{Disc}(X)$
 and any $\pi \in \operatorname{Disc}(Y)$, 
 the following period integral
\begin{equation}
\label{eqn:Bperiod}
   B \colon \Pi^{\infty} \times \pi^{\infty} \to {\mathbb{C}}, \quad
   (F, f) \mapsto \int_Y (\iota^{\ast} F)(y) f(y) d y
\end{equation}
 converges.  
Hence, 
 it defines a continuous $G'$-invariant bilinear form. 
In particular, 
the bilinear form \eqref{eqn:Bperiod} induces a symmetry breaking operator
\begin{equation}
\label{eqn:Tperiod}
   T_B \colon \Pi^{\infty} \to (\pi^{\vee})^{\infty}, \quad
   F \mapsto B(F, \cdot),  
\end{equation}
where $\pi^{\vee}$ denotes the contragredient representation of $\pi$. 
\end{theorem}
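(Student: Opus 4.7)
The plan is to exploit the rapid decay of smooth vectors of discrete series representations on reductive symmetric spaces, encoded via the Harish-Chandra Schwartz space $\mathcal{C}(X)$ introduced by Ban, Carmona--Delorme and van den Ban--Schlichtkrull. The first step is to establish a continuous inclusion $\Pi^{\infty} \hookrightarrow \mathcal{C}(X)$ for any $\Pi \in \operatorname{Disc}(X)$, and likewise $\pi^{\infty} \hookrightarrow \mathcal{C}(Y)$. This follows from the facts that every $K$-finite generalized matrix coefficient of an $L^2(X)$-embedded representation is majorized by a power of the spherical function $\Xi_X$ times a negative power of the distance function on $X$, and that the moderate-growth smooth globalization is the closure of $K$-finite vectors in the Schwartz topology.

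Second, I would prove that the restriction map $\iota^{\ast}\colon C^{\infty}(X)\to C^{\infty}(Y)$ sends $\mathcal{C}(X)$ continuously into $\mathcal{C}(Y)$. Concretely, fix compatible polar decompositions $X=K\exp(\mathfrak{a}_X)\cdot o_X$ and $Y=K'\exp(\mathfrak{a}_Y)\cdot o_Y$ with $K'=K\cap G'$; since $\sigma$ and $\theta$ restrict compatibly to $G'$, one may arrange $\mathfrak{a}_Y\subset \mathfrak{a}_X$ after conjugation. The required estimate then reduces to the inequality $\Xi_X(y)\le C\,\Xi_Y(y)^{\alpha}$ for $y\in \exp(\mathfrak{a}_Y)\cdot o_Y$, for suitable constants, together with analogous comparisons for invariant differential operators. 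This uses standard asymptotic expansions of spherical functions on symmetric spaces and the fact that seminorms defining $\mathcal{C}(X)$ are polynomial in left/right invariant derivatives.

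Third, once $\iota^{\ast}F\in \mathcal{C}(Y)$ and $f\in\mathcal{C}(Y)$, their pointwise product is Schwartz on $Y$ and therefore integrable against $dy$, so the integral \eqref{eqn:Bperiod} converges absolutely and the resulting bilinear form $B$ is jointly continuous in $(F,f)$. The $G'$-invariance is automatic from the $G'$-equivariance of $\iota$ and the $G'$-invariance of $dy$. Finally, the passage to $T_B\colon \Pi^{\infty}\to (\pi^{\vee})^{\infty}$ is the standard Fr{\'e}chet adjunction: $B(F,\cdot)$ lies in the continuous dual of $\pi^{\infty}$, and the $G'$-equivariance together with the moderate-growth characterization of $(\pi^{\vee})^{\infty}$ identifies $T_B(F)$ as a smooth vector of $\pi^{\vee}$.

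The main obstacle is the second step, comparing the intrinsic decay geometries of $X$ and $Y$. Because $Y$ need not be of the same real rank as $X$, the Schwartz seminorms on $Y$ may require estimates in directions where $F$ decays only like $\Xi_X$ to a specific power; one must verify that this power is always sufficient to produce an element of $\mathcal{C}(Y)$. The compatibility of $(\sigma,\theta)$ with $G'$ and the resulting embedding of flat subspaces is precisely what makes this comparison work, and is where the reductive symmetric structure of both $X$ and $Y$ is essential; without it, convergence could fail even for $K$-finite vectors.
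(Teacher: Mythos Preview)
The paper does not prove this theorem; it is announced here with the proof deferred to the forthcoming article \cite{xks25} (see the sentence immediately preceding the statement and the general disclaimer at the end of the introduction). There is therefore no argument in the present paper to compare your proposal against.

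That said, your strategy---embed smooth vectors of discrete series into the Harish-Chandra Schwartz space of the symmetric space, control the restriction $\iota^{\ast}$ via a comparison of the elementary spherical functions, and deduce integrability---is the natural one and is very likely close to what the authors have in mind. Two points in your outline deserve tightening.

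First, in Step~2 the inequality $\Xi_X(y)\le C\,\Xi_Y(y)^{\alpha}$ is not the right shape. Since $\Xi_Y\le 1$, an exponent $\alpha<1$ is a \emph{weaker} bound than $\alpha=1$ and does not put $\iota^{\ast}F$ into $\mathcal{C}(Y)$. What is actually needed for $\iota^{\ast}\colon\mathcal{C}(X)\to\mathcal{C}(Y)$ is an estimate of the form $\Xi_X|_Y \le C\,\Xi_Y\,(1+\|\cdot\|_Y)^{M}$ for some $M\ge 0$, together with a comparable bound relating the two radial norms. In fact for the theorem you need less: absolute convergence of the integral only requires that $\Xi_X|_Y\cdot\Xi_Y\cdot(1+\|\cdot\|_Y)^{-N}$ be integrable on $Y$ for $N$ large, which is a strictly weaker statement and may be easier to verify directly from the asymptotics.

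Second, Step~4 as written is not correct. A continuous linear functional on $\pi^{\infty}$ lands in the distribution vectors $(\pi^{\vee})^{-\infty}$, not in $(\pi^{\vee})^{\infty}$; invoking ``moderate growth'' does not bridge this gap. The honest argument is that once $\iota^{\ast}F\in\mathcal{C}(Y)\subset L^{2}(Y)$, the functional $f\mapsto\int_Y(\iota^{\ast}F)\,f\,dy$ is represented, via the $L^{2}(Y)$-pairing, by the orthogonal projection of $\iota^{\ast}F$ onto the $\pi^{\vee}$-isotypic component of $L^{2}(Y)$. That projection carries $\mathcal{C}(Y)$ into the smooth vectors of $\pi^{\vee}$, and the continuity of $F\mapsto\iota^{\ast}F$ in the Schwartz topology then yields the continuity of $T_B$ as a map $\Pi^{\infty}\to(\pi^{\vee})^{\infty}$.
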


\medskip
The second step is to detect when the period integral $T_B$ does not vanish.
It should be noted that the period integral can vanish,
even when $\operatorname{Hom}_{G'}(\Pi^{\infty}|_{G'}, \pi^{\infty}) \neq 0$.   
This leads to the following question:

\begin{question}
\label{q:period}
Find a sufficient condition
 for the period integral \eqref{eqn:Bperiod} not to vanish.   
\end{question}

Some sufficient conditions have been derived in the special cases when both $X$ and $Y$ are group manifolds  \cite{HL, Va01},
 and when $X$ is a certain rank-one symmetric space \cite{OS21, OS25}.  
In a forthcoming paper \cite{xks25}, 
 we will prove the following theorem for the general pair of reductive Lie groups 
 $G' \subset G$
 and for their reductive symmetric spaces $Y \subset X$
 of higher rank:

\begin{theorem}[\cite{xks25}]
\label{thm:24012120}
Let $Y \subset X$ be as in the beginning of this section.  
Additionally, 
 we assume that $G$ is contained in a connected complex reductive Lie group $G_{\mathbb{C}}$
 and that $K$ and $K'$ are in the Harish-Chandra class.  
Let $\Pi \in \operatorname{Disc}(X)$
 and $\pi \in \operatorname{Disc}(Y)$
 both have non-singular infinitesimal characters.  
Suppose that the minimal $K$-types
 $\mu({\Pi}) \in \widehat K$
 and  $\mu'({\pi}) \in \widehat {K'}$ satisfy 
 the following two conditions:
\begin{equation}
\label{eqn:minK}
   [\mu({\Pi})|_{K'}: \mu'({\pi})] = 1;
\end{equation}
\begin{equation}
\label{eqn:241226}
\text{
a non-zero highest weight vector of $\mu(\Pi)$ 
 is contained in $\mu'(\pi)$.  }
\end{equation}
Then the period integral \eqref{eqn:Bperiod} is non-zero, 
 and consequently, 
 the corresponding symmetry breaking operator (SBO)
 in \eqref{eqn:Tperiod} is non-zero.  
\end{theorem}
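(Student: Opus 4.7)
The strategy is to realize both $\Pi$ and $\pi$ via cohomological parabolic induction in the good range (which is legitimate because the infinitesimal characters are assumed non-singular), to test the period integral $B$ on an explicit pair of highest weight vectors of the minimal $K$-types, and then to use the branching hypotheses \eqref{eqn:minK}--\eqref{eqn:241226} to reduce the non-vanishing to a finite-dimensional statement about $K$-types.

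First, I would fix a realization of $\Pi$ as a closed $G$-invariant subspace of $L^2(X)$ in which $\mu(\Pi) \subset \Pi^\infty$ consists of explicit $K$-finite smooth $L^2$-functions on $X$; Vogan's theorem, applicable because of the non-singular infinitesimal character, gives that $\mu(\Pi)$ occurs with multiplicity one in $\Pi|_K$. Let $v \in \mu(\Pi)$ be a highest weight vector and fix similarly a highest weight vector $v' \in \mu'(\pi) \subset \pi^\infty \subset L^2(Y)$. The goal becomes to show
\[
 B(v, v')\;=\;\int_Y (\iota^* v)(y)\, v'(y)\, d y\;\neq\;0.
\]
Condition \eqref{eqn:minK} says that $\mu'(\pi)$ occurs with multiplicity one in $\mu(\Pi)|_{K'}$, while \eqref{eqn:241226} places the highest weight vector of $\mu(\Pi)$ inside this unique copy of $\mu'(\pi)$; consequently $\iota^* v$, as a $K'$-finite vector in $C^\infty(Y)$, transforms as a highest weight vector of type $\mu'(\pi)$. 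Granting that $\iota^* v$ is not identically zero, the $K'$-submodule of $C^\infty(Y)$ it generates is irreducible of type $\mu'(\pi)$; invoking Vogan's theorem again on the $\pi$-side — $\pi$ is the unique discrete series of $Y$ whose minimal $K'$-type is $\mu'(\pi)$ — one concludes that $\iota^* v$ has non-zero projection onto the $\pi$-isotypic part of $L^2(Y)$. Pairing with $v'$ then yields $B(v,v')\ne 0$, the convergence being supplied by Theorem~\ref{thm:period}.

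The main obstacle is the geometric non-vanishing of $\iota^* v$ as a smooth function on $Y$. Hypothesis \eqref{eqn:241226} is an abstract statement about a vector in the $K$-module $\mu(\Pi)$ and it must be upgraded to the assertion that a specific smooth function on $X$ does not vanish identically on the subvariety $Y$. The natural tool is the Flensted-Jensen duality together with the description of the minimal $K$-type vectors of a cohomologically induced discrete series as boundary values of equivariant sections on a closed $K_{\mathbb C}$-orbit in the generalized flag variety of $G_{\mathbb C}$; the Harish-Chandra-class hypothesis on $K$ and $K'$ is exactly what ensures that the dual picture for $Y \hookrightarrow X$ is compatible with that for $G' \hookrightarrow G$. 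Under this translation, \eqref{eqn:241226} becomes the statement that the corresponding equivariant section restricts non-trivially to the compact dual of $Y$, which is a finite-dimensional Frobenius-reciprocity check. Everything else — the multiplicity-one projection onto $K'$-isotypic components, Vogan's uniqueness for $\pi$, and the convergence assertion — is by comparison standard.
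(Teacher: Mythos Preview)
The paper does not actually prove Theorem~\ref{thm:24012120}; it is announced there and its proof is explicitly deferred to the forthcoming article \cite{xks25}. So there is no proof in the paper against which to compare your proposal, and the evaluation below concerns only the internal logic of your sketch.

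There is a genuine gap, and it is not the one you flag. You correctly observe that $\iota^{\ast}v$ is $K'$-finite of type $\mu'(\pi)$ (this is what \eqref{eqn:minK} and \eqref{eqn:241226} buy), and you identify the non-vanishing of $\iota^{\ast}v$ on $Y$ as the main obstacle. But even granting $\iota^{\ast}v\not\equiv 0$, your next step --- ``invoking Vogan's theorem \ldots\ one concludes that $\iota^{\ast}v$ has non-zero projection onto the $\pi$-isotypic part of $L^2(Y)$'' --- does not follow. Vogan's minimal $K'$-type theorem says that $\pi$ is the unique \emph{irreducible} Harish-Chandra module for $G'$ whose minimal $K'$-type is $\mu'(\pi)$; it says nothing about where an arbitrary smooth function on $Y$ of that $K'$-type must live spectrally. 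The $K'$-type $\mu'(\pi)$ occurs with infinite multiplicity in $L^2(Y)$: in the continuous spectrum, and typically as a non-minimal $K'$-type of many other discrete series. The restriction $\iota^{\ast}v$ is not a priori an eigenfunction of ${\mathfrak{Z}}({\mathfrak{g}}'_{\mathbb{C}})$ or of the invariant differential operators on $Y$, so there is no mechanism in your argument that forces any part of it into the $\pi$-summand. (Nor is it clear that $\iota^{\ast}v\in L^2(Y)$ at all; Theorem~\ref{thm:period} only gives convergence of the pairing, not square-integrability of the restriction.)

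What is missing is precisely the analytic input that links the geometry of $Y\hookrightarrow X$ to the spectral decomposition of $L^2(Y)$: one needs either a direct computation of $B(v,v')$ via an explicit realization of the minimal $K$-type vectors (e.g.\ through the Flensted-Jensen/Poisson picture you allude to, but carried out rather than invoked), or an argument that $\iota^{\ast}v$ satisfies the correct system of differential equations on $Y$ so that, together with the $K'$-type and an asymptotic/growth condition, it is forced into $\pi^{\infty}$. Your second paragraph gestures at the former, but the translation of \eqref{eqn:241226} into a Frobenius-reciprocity check on the compact dual only addresses non-vanishing of the restriction, not its spectral location; the hard step is still ahead of that.
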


\begin{remark}
In the case where $(G, G')=(GL(n,{\mathbb{R}}), GL(n-1,{\mathbb{R}}))$, 
 one of $K=O(n)$ or $K'=O(n-1)$
 is not in Harish-Chandra class.  
%In this case,  an irreducible $K$-module is not necessarily irreducible as a ${\mathfrak{k}}$-module.
However, Theorem \ref{thm:24012120} holds in this case as well, 
provided that we define minimal $K$-types
 in terms of their irreducible ${\mathfrak{k}}$-summands.  
\end{remark}

\begin{remark}
\label{rem:K_mult_one}
Theorem~\ref{thm:24012120} applies to general pairs of real reductive Lie groups, $(G,G')$.
In the specific cases where 
\[(G,G')=(GL(n, \mathbb{R}), GL(n-1, \mathbb{R}))
\text{ or }
(U(p,q), U(p-1,q)),
\]
the assumption (\ref{eqn:minK}) is automatically derived from (\ref{eqn:241226}).
\end{remark}
\begin{remark}
\label{rem:24112401}
Yet another sufficient condition for the non-vanishing of the period integral \eqref{eqn:Bperiod} is 
\[
  \dim \invHom{K}{\mu(\Pi)}{C^{\infty}(K/M_H)}
  =
  \dim \invHom{K'}{\mu'(\pi)}{C^{\infty}(K'/M_H')}=1, 
\]
where $M_H$ is the centralizer of a generic element
 in ${\mathfrak{g}}^{-\theta, -\sigma}$
 in $H \cap K$, 
 and $M_H'$ is that of ${\mathfrak{g}'}^{-\theta, -\sigma}$
 in $H' \cap K'$.  
This condition is satisfied, 
 in particular, 
 when $K_{\mathbb{C}}/M_{H, {\mathbb{C}}}$
 and $K_{\mathbb{C}}'/M_{H, {\mathbb{C}}}'$
 are spherical.
However, the settings that we will treat in Sections \ref{sec:GHgeneral} and \ref{sec:upq2}
 are more general.  
\end{remark}

We give some examples of Theorem \ref{thm:period}
 in Sections \ref{sec:GHgeneral}
and \ref{sec:upq2} in settings where $X$ is a symmetric space of $G=GL(n,{\mathbb{R}})$
 and $G=U(p,q)$, 
 respectively.

%%%%%%%%%%%%%%%%%%%%
%%%%%%%%%%%%%%%%%%%%%%%%%%%%%%%%%%%%%%
\section{A Family of Representations of $GL(n, {\mathbb{R}})$}
In this section we introduce a family of irreducible unitary representations 
 of $GL(N, {\mathbb{R}})$ for $N=n$ or $n-1$
  that are not necessarily tempered, but are  discrete series representations of a symmetric space of $GL(n,{\mathbb{R}})$.  
\subsection{Weyl's Notation for $\widehat{O(N)}$}
\label{subsec:Weyl}
%%%%%%%%%%%%%%%%%%%%%%%%%%%%%%%%%%%%%%%%
~~~\newline
We observe that the maximal compact subgroup $K = O(N)$ of $G = GL(N, {\mathbb{R}})$ is not of Harish-Chandra class when $N$ is even; that is, the adjoint action $\operatorname{Ad}(g)$ on $\mathfrak g$ is not always inner.
To discuss the branching laws for $(K, K')=(O(N), O(N{ - }1))$,
particularly those concerning the minimal $K$-types of discrete series representations of $G$ for reductive symmetric spaces $G/H$ (see Theorem~\ref{thm:24012120}),
we find that Weyl's notation (see \cite[Chap.\ V, Sect.\ 7]{Weyl97})---briefly recalled below---is more convenient and uniform than the conventional description based on highest weight theory.

Let $\widehat{O(N)}$ denote the set of equivalence classes of irreducible representations of $O(N)$.
Let $\Lambda^+(O(N))$ be the set 
 of $\lambda=(\lambda_1, \dots, \lambda_N) \in {\mathbb{Z}}^N$
 in one of the following forms.  
\begin{itemize}
\item[]Type I\,: $(\lambda_1,\cdots,\lambda_k,\underbrace{0,\cdots,0}_{N-k})$,
\item[]Type II\,: $(\lambda_1,\cdots,\lambda_k,\underbrace{1,\cdots,1}_{N-2k},\underbrace{0,\cdots,0}_k)$,
\end{itemize}
 where $\lambda_1\geq \lambda_2\geq\cdots\geq \lambda_k > 0$
 and $0\leq 2k \leq N$.

For any $\lambda \in \Lambda^+(O(N))$, 
 let $v_{\lambda}$ be the highest weight vector
 of the irreducible $U(N)$-module $\Kirredrep{U(N)}{\lambda}$.  
Then there exists a unique $O(N)$-irreducible submodule containing $v_{\lambda}$, 
 which we denote by 
 $\Kirredrep {O(N)}{\lambda}$.  
Weyl established the following bijection:
\begin{equation} 
\label{eqn:CWOn}
\Lambda^+(O(N)) \stackrel{\sim}{\longrightarrow} \widehat{O(N)},
\quad
\lambda \mapsto \Kirredrep {O(N)}{\lambda}.  
\end{equation}

\medskip
%%%%%%%%%%%%%%%%%%%%%%%
\subsection{Relative Discrete Series Representations of $GL(2,{\mathbb{R}})$}
%\label{subsec:}
%%%%%%%%%%%%%%%%%%%%%%%
~~~\newline
Let $\sigma_a$ ($a \in {\mathbb{N}}_+$) denote the relative discrete series representation of $GL(2,{\mathbb{R}})$
 with the following property:
\begin{alignat*}{2}
&\text{infinitesimal character $\frac 1 2(a,-a)$} 
&&\text{(Harish-Chandra parameter);}  
\\
&\text{minimal $K$-type $\Kirredrep{O(2)}{a+1, 0}$}
\quad
&&\text{(Blattner parameter)}.  
\end{alignat*}

We note that the restriction $\sigma_a|_{SL(2,{\mathbb{R}})}$ splits into the direct sum of a holomorphic (resp.\ anti-holomorphic) discrete series representation with minimal $K$-type ${\mathbb{C}}_{a+1}$
 (resp.\ ${\mathbb{C}}_{-(a+1)}$).

%%%%%%%%%%%%%%%%%%%%%%%
\subsection{Certain Family of (Non-Tempered) Irreducible Representations of $GL(n,{\mathbb{R}})$}
\label{subsec:PlG}
%%%%%%%%%%%%%%%%%%%%%%%
~~~\newline
Let $G=GL(n,{\mathbb{R}})$.  
For an integer $\ell$ with $0\leq 2 \ell \le n$, 
 let $P_{\ell}$ denote a real parabolic subgroup of $G$ whose Levi part is
 \[L_{\ell} := GL(2,{\mathbb{R}})^{\ell} \times GL(n-2\ell,{\mathbb{R}}).
 \]

For $\lambda=(\lambda_1, \dots, \lambda_{\ell}) \in {\mathbb{N}}_+^{\ell}$, 
 we define a unitary representation of $G$
 by means of normalized smooth parabolic induction: 
\begin{equation}
\label{eqn:PlG}
\Pi_{\ell}(\lambda):= \operatorname{Ind}_{P_{\ell}}^G \big(\bigotimes_{j=1}^{\ell} \sigma_{\lambda_j} \otimes {\bf{1}}\big).  
\end{equation}
Then $\Pi_{\ell}(\lambda)$ is an irreducible unitary representation of $GL(n,{\mathbb{R}})$ ({\it{cf}}.\ \cite{Vo86}).  
Moreover,
it is a tempered unitary representation if and only if $n=2 \ell + 1$ or $2 \ell$.  

For $2 k \le n-1$,  and for $\nu=(\nu_1, \dots, \nu_k)$, 
 we shall use an analogous notation $\pi_k(\nu)$
 for a family of irreducible unitary representations
 of $G'=GL(n-1,{\mathbb{R}})$.  

\bigskip

%%%%%%%%%%%%%%%%%%%%%%%%%%%%%%%%%%%%%%%%%%%%%%%%%%%%%%%%%%%%%%%%%%%%%%%%%
\subsection{Cohomological Parabolic Induction for $GL(n,{\mathbb{R}})$}
\label{subsec:qlG}
%%%%%%%%%%%%%%%%%%%%%%%%%%%%%%%%%%%%%%%%%%%%%%%%%%%%%%%%%%%%%%%%%%%%%%%%%
~~~\newline
An alternative construction of the representations $\Pi_\mathfrak{q}(\lambda )$
is given by cohomological induction.  

Let $2 \ell \le n$
 and ${\mathfrak{q}}_{\ell}$ be a $\theta$-stable parabolic subalgebra of ${\mathfrak{g}}_{\mathbb{C}} \simeq {\mathfrak{g l}}(n,{\mathbb{C}})$
 with the real Levi subgroup
\[
   L \equiv N_G({\mathfrak{q}}_{\ell}) \simeq ({\mathbb{C}}^{\times})^{\ell} \times GL(n-2 \ell,{\mathbb{R}}).  
\]

We set 
\[
   S_{\ell}:= \frac 1 2 \dim K/L=\ell(n-\ell-1).  
\]
Suppose that $\lambda =(\lambda_1, \dots, \lambda_{\ell})\in {\mathbb{Z}}^{\ell}$ satisfies
 $\lambda_1 > \cdots > \lambda_{\ell} >0$.  
We adopt a normalization such that the cohomological parabolic induction
 ${\mathcal{R}}_{{\mathfrak{q}}_{\ell}}^{S_{\ell}}({\mathbb{C}}_{\lambda})$
 has a ${\mathfrak{Z}}({\mathfrak{g}})$-infinitesimal character given by 
\begin{equation}
\label{eqn:GLinflmd}   
   \frac 1 2 (\lambda_{1}, \dots, \lambda_{\ell}, n-2\ell-1, \dots, 1+2 \ell-n, -\lambda_{\ell}, \dots, -\lambda_{1}) \in {\mathbb{C}}^n/{\mathfrak{S}}_n, 
\end{equation}
via the Harish-Chandra isomorphism.
Then its minimal $K$-type is given by
\begin{equation}
\label{eqn:23101406}
\mu_{\lambda}=(\lambda_{1}+1, \dots, \lambda_{\ell}+1, 0, \dots, 0) \in \Lambda^+(O(n))
\end{equation}
in Weyl's notation. 

The underlying $({\mathfrak{g}}, K)$-module of the $G$-modules $\Pi_{\ell}(\lambda)$  can be described  in terms of cohomological parabolic induction:  
\begin{equation}
\label{eqn:GLrealcomplex}    
  \Pi_{\ell}(\lambda)_K 
  \simeq 
  {\mathcal{R}}_{{\mathfrak{q}}_{\ell}}^{S_{\ell}}({\mathbb{C}}_{\lambda}). 
\end{equation}

%This is of type I  because $\lambda_1 > \cdots> \lambda_{\ell}>0$, 
 % see Section \ref{subsec:Weyl}.  

If $n>2\ell$ then the $O(n)$-module $\Kirredrep{O(n)}{\mu_{\lambda}}$ stays irreducible 
when restricted to $SO(n)$, 
 and its highest weight is given by \linebreak
 $(\lambda_1+1, \dots, \lambda_{\ell}+1, \underbrace{0, \dots, 0}_{[\frac n 2]-\ell})$
 in the standard notation.  
If $n=2 \ell$, 
 then $\Kirredrep{O(n)}{\mu_{\lambda}}$ splits into the direct sum of two irreducible $SO(n)$-modules
 with highest weights $(\lambda_1+1, \dots, \lambda_{\ell-1}+1, \lambda_{\ell}+1)$
 and \linebreak 
 $(\lambda_1+1, \dots, \lambda_{\ell-1}+1, -\lambda_{\ell}-1)$.

The parameter $\lambda =(\lambda_1, \dots, \lambda_{\ell})$
 is in the \lq\lq{good range}\rq\rq\
 with respect to ${\mathfrak{q}}_{\ell}$
 in the sense of \cite{Vogan84}
 if the following condition is satisfied:

\begin{equation}
\label{eqn:good}
  \lambda_1 > \lambda_2> \cdots > \lambda_{\ell} > \max(n-2\ell-1, 0).  
\end{equation}

\medskip

\bigskip
%%%%%%%%%%%%%%%%%%%%%%%%%%%%%%%%%%
\section{Restricting Discrete Series Representations
 for the Symmetric Space  $GL(n, {\mathbb{R}})/(GL(p, {\mathbb{R}}) \times GL(n-p, {\mathbb{R}}))$
 to the Subgroup $GL(n-1, \mathbb R)$}
\label{sec:GHgeneral}
%%%%%%%%%%%%%%%%%%%%%%%%%%%%%%%%%

In this section,
 we prove the existence of a non-zero $G'$-homomorphism from $\Pi^{\infty}$ to $\pi^{\infty}$, 
 where $\Pi \in \operatorname{Disc}(X)$ and $\pi \in \operatorname{Disc}(Y)$,
 by using the \lq\lq jumping fences\rq\rq\ trick in the translation theorems for symmetry breaking, 
 as explained in Section \ref{sec:tfsbo}.

Throughout this section, 
 we consider the following setup:
 $X=G/H$, $Y=G'/(H \cap G')$, 
 where $p+q=n$ and 
\begin{align}
\label{eqn:npq}
 (G, H)&=(GL(n,{\mathbb{R}}), GL(p, {\mathbb{R}}) \times GL(q, {\mathbb{R}})),
 \\
 (G', H')&=(GL(n-1,{\mathbb{R}}), GL(p, {\mathbb{R}}) \times GL(q-1, {\mathbb{R}})).
\end{align}

The first two subsections focus
 on describing  $\operatorname{Disc}(K/H \cap K)$
 and $\operatorname{Disc}(G/H)$.  
We then apply Theorem \ref{thm:24012120}
 to prove the non-vanishing of the period integral under the assumption on the minimal $K$-types, 
 as described in \eqref{eqn:241226}.  
We shall see that the parity condition allows us to \lq\lq{jump the fences}\rq\rq\
 for this interlacing pattern by iteratively applying Theorems \ref{thm:23081404} and \ref{thm:24102601}.  
This leads to the whole range of parameters $(\lambda, \nu)$ for the non-vanishing of symmetry breaking in the restriction $G\downarrow G'$, 
 as detailed in Theorem \ref{thm:12}.

\bigskip

\subsection{Description of $\operatorname{Disc}(K/H \cap K)$}
~~~\newline
In the setting \eqref{eqn:npq}, 
 the pair of maximal compact subgroups $(K, H \cap K)$ is given by $(O(p+q), O(p) \times O(q))$.  
The following result extends the Cartan--Helgason theorem, which was originally formulated for connected groups, 
 to the case of disconnected groups.  

\begin{proposition}
\label{prop:23101419} 
Let $\ell:=\min(p,q)$.
In Weyl's notation (see Section \ref{subsec:Weyl}), 
 $\operatorname{Disc}(O(p+q)/O(p) \times O(q))$ is given by
\[
\{
  \Kirredrep{O(p+q)}{\mu}: \mu=(\mu_1, \dots, \mu_\ell,\overbrace{0, \dots,0}^{\max(p,q)}) \in (2{\mathbb{Z}})^{p+q}, 
  \mu_1 \ge \cdots \ge \mu_\ell \ge 0
\}.  
\]
\end{proposition}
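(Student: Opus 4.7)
The plan is to reformulate the statement via Frobenius reciprocity, apply the classical Cartan--Helgason theorem at the Lie-algebra level to pin down the shape of the highest weight, and then extract the evenness constraint using the disconnectedness of the orthogonal groups.

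First, since $K/(H\cap K)=O(p+q)/(O(p)\times O(q))$ is a compact homogeneous space, Frobenius reciprocity gives that $\tau\in\operatorname{Disc}(O(p+q)/(O(p)\times O(q)))$ if and only if $\tau^{O(p)\times O(q)}\ne 0$. Thus it suffices to classify the $(O(p)\times O(q))$-spherical irreducible representations of $O(p+q)$.

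Second, I would apply the classical Cartan--Helgason theorem to the Riemannian symmetric pair $(\mathfrak{o}(p+q),\mathfrak{o}(p)\oplus\mathfrak{o}(q))$, whose rank is $\ell=\min(p,q)$ and whose restricted root system is of type $B_\ell$ if $p\neq q$ and $D_\ell$ if $p=q$. At the integer-weight level (the relevant one for $O(p+q)$), the theorem forces the highest weight to satisfy the support condition $\mu_{\ell+1}=\cdots=\mu_{[(p+q)/2]}=0$ together with ordinary dominance $\mu_1\geq\cdots\geq\mu_\ell\geq 0$. By Weyl's classification recalled in Section~\ref{subsec:Weyl}, such a $\mu$ is of Type I, and $\Kirredrep{O(p+q)}{\mu}$ remains irreducible on restriction to $SO(p+q)$, with a one-dimensional $(SO(p)\times SO(q))$-fixed line.

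The remaining and most delicate step is deriving the evenness constraint $\mu\in(2\mathbb{Z})^{p+q}$. Because this one-dimensional $(SO(p)\times SO(q))$-fixed line carries an induced action of the quotient $(O(p)\times O(q))/(SO(p)\times SO(q))\simeq\mathbb{Z}/2\times\mathbb{Z}/2$, $(O(p)\times O(q))$-sphericity demands both associated characters to be trivial. I would verify this geometrically via the embedding
\[
Gr_p(\mathbb{R}^{p+q})\hookrightarrow\operatorname{Sym}^2(\mathbb{R}^{p+q}),
\qquad V\mapsto P_V=uu^{T},
\]
where $u$ is any orthonormal $(p+q)\times p$ frame for $V$. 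A degree-$k$ polynomial on the ambient $\operatorname{Sym}^2(\mathbb{R}^{p+q})$ pulls back to a degree-$2k$ polynomial in $u$; combining this doubling with the Howe-duality decomposition of $\mathbb{C}[M_{p+q,p}]$ and extracting $O(p)$-invariants (which descends from the Stiefel manifold to the Grassmannian) realizes the spherical summands as exactly the $\Kirredrep{O(p+q)}{2\nu}$ for partitions $\nu$ of length $\leq\ell$, proving both necessity and sufficiency of the claimed parametrization.

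The main obstacle is precisely this evenness step: Cartan--Helgason alone yields all integer dominant weights of the prescribed shape (these are the spherical weights for the \emph{oriented} Grassmannian $SO(p+q)/SO(p)\times SO(q)$), so the doubling $\mu\in 2\mathbb{Z}$ arises only after passing to the unoriented Grassmannian, equivalently imposing the extra $\mathbb{Z}/2\times\mathbb{Z}/2$ invariance. The geometric/Howe-duality route treats the cases $p\neq q$ and $p=q$ uniformly and circumvents a direct character computation on the $SO$-spherical vector.
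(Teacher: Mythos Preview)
The paper does not actually prove this proposition: it is presented as an extension of the Cartan--Helgason theorem to disconnected groups, with details deferred to the forthcoming articles cited in the introduction. So there is no paper proof to compare against, and your plan must stand on its own.

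Your architecture is sound: Frobenius reciprocity reduces to finding $(O(p)\times O(q))$-spherical irreducibles; the connected Cartan--Helgason theorem for $SO(p+q)/(SO(p)\times SO(q))$ pins down the support condition $\mu_{\ell+1}=\cdots=0$; and the evenness must then come from the extra $\mathbb{Z}/2\times\mathbb{Z}/2$. You have correctly isolated the crux.

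However, your specific mechanism for evenness has a gap. The ``doubling'' observation---that degree-$k$ polynomials in $P=uu^{T}$ become degree-$2k$ in $u$, so that via $(GL_{p+q},GL_p)$-duality and $O(p)$-invariants one lands in $\bigoplus_{\nu}F^{GL_{p+q}}_{2\nu}$---controls the $GL_{p+q}$-type, not the $O(p+q)$-type. These are genuinely different: for instance $F^{GL_n}_{(4,2)}|_{O(n)}$ contains $F^{O(n)}_{(3,1)}$ (Littlewood's rule gives $c^{(4,2)}_{(3,1),(2)}=1$), so an even $GL$-type can produce an odd $O$-type. Thus the route through $\operatorname{Sym}^2$ and $(GL,GL)$-duality only shows that $C^\infty(Gr)$ is a subquotient of $\bigoplus F^{GL_{p+q}}_{2\nu}|_{O(p+q)}$, which by itself does not exclude odd $O(p+q)$-constituents.

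The fix is to run Howe duality in the form that outputs $O(p+q)$-types directly: the $O(p+q)$-harmonic polynomials on $M_{p+q,p}$ decompose under $O(p+q)\times GL(p)$ as $\bigoplus_{\mu}F^{O(p+q)}_{\mu}\otimes F^{GL(p)}_{\mu}$ (in the stable range). Taking $O(p)$-invariants on the $GL(p)$ factor and invoking the $(GL(p),O(p))$ Cartan--Helgason theorem forces $\mu$ to be even, yielding $\mathcal{H}(M_{p+q,p})^{O(p)}=\bigoplus_{\mu\ \text{even}}F^{O(p+q)}_{\mu}$. Since the $O(p+q)$-invariants on $M_{p+q,p}$ (the entries of $u^{T}u$) are constant on the Stiefel manifold, restriction of harmonics to the Stiefel manifold is injective with dense image, and passing to $O(p)$-invariants gives exactly $C^\infty(Gr)$. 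This version of your argument closes the gap and, as you note, handles $p=q$ and $p\neq q$ uniformly.
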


If $p\neq q$, or if $\mu_\ell=0$, 
 then the $O(p+q)$-module $\Kirredrep{O(p+q)}{\mu}$ remains irreducible 
 when restricted to $SO(p+q)$.  
If $p=q$ and $\mu_\ell \ne 0$, 
 then $\Kirredrep{O(p+q)}{\mu}$ decomposes into the direct sum 
 of two irreducible $SO(p+q)$-modules.  

\bigskip
%%%%%%%%%%%%%%%%%%%%%%%%%%%%%%%%%%%%%%%%%%%%%%%%%%%%%%%%%%%%%%%%%
\subsection{Discrete Series for $GL(p+q, {\mathbb{R}})/(GL(p, {\mathbb{R}}) \times GL(q, {\mathbb{R}}))$}
%%%%%%%%%%%%%%%%%%%%%%%%%%%%%%%%%%%%%%%%%%%%%%%%%%%%%%%%%%%%%%%%%
~~~\newline
In this subsection, 
 we provide a complete description of discrete series representations for $G/H$ in the setting of \eqref{eqn:npq}.  

Let $G=GL(n, \mathbb{R})$ and $0 \le 2 \ell \le n$.
As recalled in \eqref{eqn:PlG} and \eqref{eqn:GLrealcomplex}, $\Pi_\ell(\lambda)$ denotes the irreducible unitary representation of $G$ obtained via parabolic induction, or equivalently,
through cohomological parabolic induction.

\medskip

\begin{proposition}
\label{prop:DiscGH}
Let $n=p+q$ and $\ell :=\min(p, q)$.  
Then the set of discrete series representations for $G/H$ is given by 
\[
  \{\Pi_{\ell}(\lambda):
\lambda=(\lambda_1, \dots, \lambda_\ell) \in (2{\mathbb{Z}}+1)^{\ell},
\lambda_1 > \lambda_2> \cdots >\lambda_\ell>0
\}.  
\]
\end{proposition}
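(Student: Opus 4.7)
The proof rests on three pillars: the Flensted-Jensen--Matsuki--Oshima rank criterion \eqref{eqn:FJrank} guaranteeing the existence of a discrete spectrum, the description of $\operatorname{Disc}(K/H\cap K)$ in Proposition~\ref{prop:23101419}, and the Matsuki--Oshima realization of each element of $\operatorname{Disc}(G/H)$ as a cohomologically induced module $\mathcal{R}_{\mathfrak{q}_\ell}^{S_\ell}(\mathbb{C}_\lambda)$ from the $\theta$-stable parabolic $\mathfrak{q}_\ell$ of Section~\ref{subsec:qlG}.

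First, I would verify $\operatorname{rank}(G/H) = \operatorname{rank}(K/H\cap K) = \ell = \min(p,q)$ by a direct calculation with respect to the involution $\sigma=\operatorname{Ad}(\operatorname{diag}(I_p,-I_q))$, so that $\operatorname{Disc}(G/H)\ne\emptyset$ by \eqref{eqn:FJrank}. Then, for any $\Pi \in \operatorname{Disc}(G/H)$, I observe that the minimal $K$-type $\mu(\Pi)$ is itself $(H\cap K)$-distinguished: an $H$-invariant linear form on the minimal $K$-isotypic component produces a non-zero $(H\cap K)$-fixed vector in $\mu(\Pi)$. By Proposition~\ref{prop:23101419}, $\mu(\Pi)=(\mu_1,\dots,\mu_\ell,0,\dots,0)$ with $\mu_i\in 2\mathbb{Z}$ and $\mu_1\ge\cdots\ge\mu_\ell\ge 0$. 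Combining this with Vogan's minimal-$K$-type correspondence and the Matsuki--Oshima classification of discrete series via $\theta$-stable parabolics, the only compatible choice is $\mathfrak{q}_\ell$, and matching against the formula \eqref{eqn:23101406} identifies $\Pi$ with $\Pi_\ell(\lambda)$ where $\lambda_i=\mu_i-1\in 2\mathbb{Z}+1$. Conversely, for any $\lambda\in(2\mathbb{Z}+1)^\ell$ with $\lambda_1>\cdots>\lambda_\ell>0$, the module $\Pi_\ell(\lambda)_K=\mathcal{R}_{\mathfrak{q}_\ell}^{S_\ell}(\mathbb{C}_\lambda)$ is non-zero and irreducible, and the Flensted-Jensen Poisson transform from the unique (up to scalar) $H$-distinguished hyperfunction vector embeds $\Pi_\ell(\lambda)$ into $C^\infty(G/H)$.

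The hard part is to sharply delineate which of these embeddings actually land in $L^2(G/H)$, i.e.\ to show that strict inequalities $\lambda_1>\cdots>\lambda_\ell>0$ are necessary and sufficient for discreteness, rather than the weak ones inherited from Proposition~\ref{prop:23101419}. To handle this I would compute the leading exponents of the Poisson-transformed matrix coefficients on the $\ell$-dimensional split Cartan subspace of $\mathfrak{g}^{-\sigma}\cap\mathfrak{p}$ and compare them with the positive half-sum of restricted roots of $G/H$, applying the Casselman--Mili\v{c}i\'c square-integrability criterion in the symmetric-space setting. Equivalently, I would pass to the Flensted-Jensen $c$-dual $U(p,q)/(U(p)\times U(q))$, reducing the convergence question to Vogan's good-range analysis on a Hermitian symmetric space, where the strict inequalities translate into regularity of the infinitesimal character \eqref{eqn:GLinflmd} with respect to a positive system compatible with $\mathfrak{q}_\ell$. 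Boundary cases ($\lambda_i=\lambda_{i+1}$ or $\lambda_\ell=0$) either destroy this regularity or push the leading exponent to the Plancherel wall, so the corresponding $\Pi_\ell(\lambda)$ drops out of the $L^2$-spectrum; this yields the claimed bijection.
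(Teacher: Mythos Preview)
Your proposal misidentifies where the genuine difficulty lies. You assert, in passing, that for every $\lambda\in(2\mathbb{Z}+1)^\ell$ with $\lambda_1>\cdots>\lambda_\ell>0$ the module $\mathcal{R}_{\mathfrak{q}_\ell}^{S_\ell}(\mathbb{C}_\lambda)$ is non-zero and irreducible, and then devote your effort to the $L^2$-criterion. But the Matsuki--Oshima theorem already hands you the parameter set with strict inequalities \emph{and} the $L^2$-embedding for free: their description says precisely that $\operatorname{Disc}(G/H)$ consists of the \emph{non-zero} modules among a list of cohomologically induced candidates. What the general theory does \emph{not} give you is the non-vanishing and irreducibility of $\mathcal{R}_{\mathfrak{q}_\ell}^{S_\ell}(\mathbb{C}_\lambda)$ when $\lambda$ lies outside the good range \eqref{eqn:good}, i.e.\ when $0<\lambda_\ell\le n-2\ell-1$. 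For such singular parameters Vogan's results \cite{Vogan84} are silent, and this is exactly the point the paper isolates, resolving it by invoking the detailed case-by-case computations of \cite{Kobayashi92}. Your proposal skips over this step entirely.

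There are also subsidiary issues. Your claim that the minimal $K$-type of any $\Pi\in\operatorname{Disc}(G/H)$ is automatically $(H\cap K)$-distinguished---because an $H$-invariant functional ``restricts'' to a non-zero $(H\cap K)$-fixed vector in that $K$-type---is not justified; an $H$-fixed distribution vector need not pair non-trivially with the minimal $K$-isotypic component in general. More seriously, the minimal $K$-type formula \eqref{eqn:23101406} you invoke is itself only established for parameters in the good range, so using it to pin down $\Pi$ outside that range is circular. Finally, while your Flensted-Jensen dual $U(p,q)/(U(p)\times U(q))$ is computed correctly, it is a Riemannian symmetric space of non-compact type with empty discrete spectrum, so ``reducing the convergence question'' to it does not obviously address either square-integrability or non-vanishing in the way you suggest.
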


\medskip
The ${\mathfrak{Z}}({\mathfrak{g}}_{\mathbb{C}})$-infinitesimal character
 of the $G$-module $\Pi_{\ell}(\lambda)$ is non-singular 
if \eqref{eqn:good} holds, 
 or equivalently, 
 if $\lambda_\ell > n-2\ell -1$.

To verify Proposition \ref{prop:DiscGH}, we make use of Matsuki--Oshima's description \cite{MO84} of discrete series representations, which may vanish, 
 along with a detailed computation of cohomological parabolic induction
beyond \lq\lq{good range}\rq\rq, 
 specifically, 
 when $\lambda_{\ell} \le n - 2\ell -1$
 as in the similar case thoroughly studied in \cite{Kobayashi92}.  
We note that for such a singular parameter $\lambda$,  
 neither the irreducibility nor the non-vanishing 
 of cohomological parabolic induction is guaranteed by the general theory \cite{Vogan84}.  
However, 
 it turns out that both non-vanishing and irreducibility do hold in our specific setting.

\bigskip

We also derive an explicit formula for the minimal $K$-type $\mu(\Pi_{\ell})$
 of the $G$-module $\Pi_{\ell}(\lambda)$:
 it is given in Weyl's notation as follows.  
\[
    \mu(\Pi_{\ell}(\lambda))=\Kirredrep{O(n)}{\lambda_1+1, \dots, \lambda_{\ell}+1, 0, \dots, 0}.  
\]

\medskip
\subsection{Comparison of Minimal $K$-types for Two Groups $G' \subset G$}
~~~\newline
Let $n=p+q$.
We realize $H=GL(p, {\mathbb{R}}) \times GL(q, {\mathbb{R}})$
 in the standard block-diagonal form as a subgroup of $G=GL(n, {\mathbb{R}})$,
 and we realize $G'=GL(n-1, {\mathbb{R}})$
as a subgroup of $G$, corresponding to the partition $n= (n-1)+1$.  
Accordingly, 
 we obtain an embedding of the reductive symmetric space 
\[
Y= GL(n-1, {\mathbb{R}})\big/\bigl(GL(p, {\mathbb{R}}) \times GL(q-1, {\mathbb{R}})\bigr)
\] 
 of $G'$ into $X=G/H$.

We recall from Proposition~\ref{prop:DiscGH}
 that any discrete series representation for $X$ with a non-singular ${\mathfrak{Z}}({\mathfrak{g}}_{\mathbb{C}})$-infinitesimal character is of the form $\Pi_{\ell}(\lambda)$, 
 where $\ell =\min(p,q)$ and $\lambda=(\lambda_1, \dots, \lambda_{\ell}) \in (2{\mathbb{Z}}+1)^{\ell}$, 
 satisfying the regularity condition \eqref{eqn:good}.  

\medskip
 We now assume that $2 p \le n -1$, where $n=p+q$.  
In this case, $\ell=\min(p,q)=\min(p,q-1)$.
Let $\pi_\ell(\nu)$ be the irreducible unitary representation of $G'=GL(n-1, \mathbb R)$, as defined in the same way as $\Pi_\ell(\lambda)$ for $G=GL(n, \mathbb R)$. Then, the set of discrete series representations for the smaller symmetric space:
\[
 Y=G'/H'=GL(n-1,\mathbb R)\big/\big(GL(p, \mathbb R) \times GL(q-1, \mathbb R)\big)
 \]
is given by 
\[
  \left\{\pi_{\ell}(\nu):
\nu=(\nu_1, \dots, \nu_\ell) \in (2{\mathbb{Z}}+1)^{\ell},
\nu_1 > \nu_2> \cdots >\nu_\ell>0
\right\}.  
\]

With these preparations, we apply Theorems~\ref{thm:period} and~\ref{thm:24012120}
 to the pair $(\Pi_{\ell}(\lambda), \pi_{\ell}(\nu)) \in \widehat{G} \times \widehat{G'}$.  
The assumption \eqref{eqn:minK} on minimal $K$-types is automatically satisfied for the pair $(K,K')=(O(n), O(n-1))$, 
 whereas the condition \eqref{eqn:241226} is computed explicitly as follows.

\begin{lemma}
\label{lem:23102202}
The condition \eqref{eqn:241226} holds 
 if and only if
\begin{equation}
\label{eqn:24122605a}
  \lambda_1 = \nu_1 > \lambda_2 = \nu_2 > \cdots > \lambda_ \ell = \nu_\ell>0.  
\end{equation}
\end{lemma}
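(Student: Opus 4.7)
The plan is to trace the distinguished highest weight vector of the minimal $K$-type through the restriction $O(n)\downarrow O(n-1)$, using Weyl's construction recalled in Section~\ref{subsec:Weyl}. Combining Proposition~\ref{prop:DiscGH} with the formula \eqref{eqn:23101406} applied to both $G$ and $G'$ (and noting that $\ell=\min(p,q)=\min(p,q-1)=p$ under the assumption $2p\le n-1$) identifies
\[
\mu(\Pi_{\ell}(\lambda))=\Kirredrep{O(n)}{\mu_{\lambda}},\qquad \mu'(\pi_{\ell}(\nu))=\Kirredrep{O(n-1)}{\mu'_{\nu}},
\]
where $\mu_{\lambda}=(\lambda_1+1,\dots,\lambda_{\ell}+1,0,\dots,0)\in\mathbb{Z}^n$ and $\mu'_{\nu}=(\nu_1+1,\dots,\nu_{\ell}+1,0,\dots,0)\in\mathbb{Z}^{n-1}$. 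By the bijection \eqref{eqn:CWOn}, the designated highest weight vector $v_{\mu_{\lambda}}$ of $\Kirredrep{O(n)}{\mu_{\lambda}}$ coincides with the $U(n)$-highest weight vector of the ambient $\Kirredrep{U(n)}{\mu_{\lambda}}$, and $\Kirredrep{O(n-1)}{\mu'_{\nu}}\subset\Kirredrep{U(n-1)}{\mu'_{\nu}}$ by construction.

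The key step is to locate $v_{\mu_{\lambda}}$ inside the $U(n-1)$-decomposition of $\Kirredrep{U(n)}{\mu_{\lambda}}|_{U(n-1)}$. Restricting the action of the diagonal Cartan subgroup from $U(n)$ to $U(n-1)$, the vector $v_{\mu_{\lambda}}$ is a weight vector of $U(n-1)$-weight $(\lambda_1+1,\dots,\lambda_{\ell}+1,0,\dots,0)\in\mathbb{Z}^{n-1}$. This weight saturates the upper side of the Weyl/Gelfand--Tsetlin interlacing pattern \eqref{eqn:Weylhw} applied to $\mu_{\lambda}$, so $v_{\mu_{\lambda}}$ is the highest weight vector of the unique (multiplicity-one) $U(n-1)$-summand $\Kirredrep{U(n-1)}{\lambda_1+1,\dots,\lambda_{\ell}+1,0,\dots,0}$ of $\Kirredrep{U(n)}{\mu_{\lambda}}|_{U(n-1)}$.

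Condition \eqref{eqn:241226} then requires $v_{\mu_{\lambda}}\in\Kirredrep{O(n-1)}{\mu'_{\nu}}\subset\Kirredrep{U(n-1)}{\mu'_{\nu}}$, and by the uniqueness just established this forces $\mu'_{\nu}=(\lambda_1+1,\dots,\lambda_{\ell}+1,0,\dots,0)$, i.e.\ $\nu_i=\lambda_i$ for every $1\le i\le\ell$. Conversely, when this equality holds, Weyl's construction applied inside $U(n-1)$ identifies $v_{\mu_{\lambda}}$ with the very vector that carves $\Kirredrep{O(n-1)}{\mu'_{\nu}}$ out of $\Kirredrep{U(n-1)}{\mu'_{\nu}}$, so \eqref{eqn:241226} holds. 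Combined with the strict inequalities $\lambda_1>\cdots>\lambda_{\ell}>0$ built into Proposition~\ref{prop:DiscGH}, this yields \eqref{eqn:24122605a}. The only delicate point I expect is verifying the multiplicity-one placement of $v_{\mu_{\lambda}}$ in a single $U(n-1)$-isotypic component, which is precisely where Weyl's construction of $\Kirredrep{O(N)}{\cdot}$ meshes with the classical $U(n)\downarrow U(n-1)$ branching combinatorics.
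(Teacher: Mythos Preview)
The paper states Lemma~\ref{lem:23102202} without proof; it is an announcement with detailed arguments deferred to the forthcoming papers \cite{HKS, xks25, xks25b}. So there is no proof in the text to compare your attempt against. That said, your approach via Weyl's construction (Section~\ref{subsec:Weyl}) is natural and essentially correct: the key observation is that the $O(n-1)$-cyclic module generated by $v_{\mu_\lambda}$ inside $\Kirredrep{O(n)}{\mu_\lambda}$ is isomorphic to $\Kirredrep{O(n-1)}{\lambda_1+1,\dots,\lambda_\ell+1,0,\dots,0}$, because $v_{\mu_\lambda}$ is simultaneously the $U(n-1)$-highest weight vector of the summand $\Kirredrep{U(n-1)}{\lambda_1+1,\dots,\lambda_\ell+1,0,\dots,0}$ and hence, by Weyl's construction applied at the level of $U(n-1)\supset O(n-1)$, generates exactly that $O(n-1)$-type.

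One expository point deserves tightening. In your forward direction you write $v_{\mu_\lambda}\in\Kirredrep{O(n-1)}{\mu'_\nu}\subset\Kirredrep{U(n-1)}{\mu'_\nu}$ and then invoke the $U(n-1)$-isotypic decomposition of $\Kirredrep{U(n)}{\mu_\lambda}$. But the inclusion $\Kirredrep{O(n-1)}{\mu'_\nu}\subset\Kirredrep{U(n-1)}{\mu'_\nu}$ is the \emph{abstract} Weyl realization; it does not a priori tell you that the particular $O(n-1)$-submodule of $\Kirredrep{O(n)}{\mu_\lambda}$ containing $v_{\mu_\lambda}$ sits inside a single $U(n-1)$-isotypic piece of the ambient $\Kirredrep{U(n)}{\mu_\lambda}$. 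The clean fix---which you in fact carry out in your converse direction---is to argue directly with the $O(n-1)$-span of $v_{\mu_\lambda}$: this span is irreducible, it lies inside the $U(n-1)$-cyclic module generated by $v_{\mu_\lambda}$, and Weyl's construction identifies it as $\Kirredrep{O(n-1)}{\lambda_1+1,\dots,\lambda_\ell+1,0,\dots,0}$. Condition~\eqref{eqn:241226} then holds if and only if this irreducible is isomorphic to $\Kirredrep{O(n-1)}{\mu'_\nu}$, i.e.\ $\lambda_i=\nu_i$ for all $i$. Rewriting the forward implication this way removes the ambiguity, and your argument then goes through without gaps.
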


By Theorem \ref{thm:24012120}, 
 we obtain the following.  

\begin{proposition}
\label{prop:24012302}
Suppose $2 \ell \le n-1$.
Then we have
\[
\dim \invHom{G'}{\Pi_{\ell}(\lambda)^{\infty}|_{G'}}{\pi_\ell(\nu)^{\infty}} =1
\]
 for any $\lambda \in (2 {\mathbb{Z}}+1)^{\ell}$
 and $\nu \in (2 {\mathbb{Z}}+1)^{\ell}$
 satisfying \eqref{eqn:24122605a}. 
 %for {$k=\ell$,} 
 %\eqref{eqn:24122605b} for $k=\ell-1$. 
\end{proposition}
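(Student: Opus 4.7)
The plan is to derive Proposition~\ref{prop:24012302} by applying Theorem~\ref{thm:24012120} to the pair $(\Pi_{\ell}(\lambda),\pi_{\ell}(\nu))$ to produce a non-zero SBO via the period integral, and then to upgrade non-vanishing to the value $1$ using the multiplicity-one theorem of Sun--Zhu~\cite{xsunzhu}.

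First I would record that $\Pi_{\ell}(\lambda)\in\operatorname{Disc}(X)$ and $\pi_{\ell}(\nu)\in\operatorname{Disc}(Y)$: the former is Proposition~\ref{prop:DiscGH}, and the analogous description holds for $Y=GL(n-1,\mathbb{R})/(GL(p,\mathbb{R})\times GL(q-1,\mathbb{R}))$ because the assumption $2\ell\leq n-1$ forces $\min(p,q)=\min(p,q-1)=\ell$. The minimal $K$-types are given in Weyl's notation by $\Kirredrep{O(n)}{\lambda_1+1,\dots,\lambda_{\ell}+1,0,\dots,0}$ and the analogous expression for $O(n-1)$. Next I would check the two $K$-type hypotheses of Theorem~\ref{thm:24012120}: condition~\eqref{eqn:minK} is automatic for the pair $(K,K')=(O(n),O(n-1))$ by Remark~\ref{rem:K_mult_one}, and condition~\eqref{eqn:241226} is equivalent to~\eqref{eqn:24122605a} by Lemma~\ref{lem:23102202}. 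Theorem~\ref{thm:24012120} then yields a non-zero period integral, which by Theorem~\ref{thm:period} produces a non-zero SBO $\Pi_{\ell}(\lambda)^{\infty}\to(\pi_{\ell}(\nu)^{\vee})^{\infty}$. Since $\operatorname{Disc}(Y)$ is stable under contragredient within the family $\{\pi_{\ell}(\nu)\}$, a harmless relabeling of $\nu$ delivers a non-zero SBO into $\pi_{\ell}(\nu)^{\infty}$ itself, and the Sun--Zhu bound then forces the dimension to be exactly $1$.

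The main obstacle is the non-singularity hypothesis on infinitesimal characters demanded by Theorem~\ref{thm:24012120}. In the \emph{good range} $\lambda_{\ell}>n-2\ell-1$ (cf.~\eqref{eqn:good}) the infinitesimal character~\eqref{eqn:GLinflmd} of $\Pi_{\ell}(\lambda)$ is non-singular and the argument above runs cleanly; for parameters outside the good range, non-singularity can fail, and I would propagate the non-vanishing by coherent continuation in the spirit of Theorem~\ref{thm:24120703}, iterating Theorems~\ref{thm:23081404} and~\ref{thm:24102601} to translate $\lambda$ while preserving the weakly interlacing pattern dictated by~\eqref{eqn:24122605a}. The parity assumption $\lambda,\nu\in(2\mathbb{Z}+1)^{\ell}$ ensures $\tau_i-\tau_j'\in\mathbb{Z}$ for the relevant coordinates, so that $\tau_i\neq\tau_j'\pm\tfrac{1}{2}$ automatically; this is exactly the hypothesis that lets the translation theorems carry non-vanishing across fences without obstruction. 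The remaining combinatorial task, in the spirit of Lemma~\ref{lem:24120608}, is to exhibit a sequence of $\pm f_i$ translations connecting a chosen good-range representative to any target parameter satisfying~\eqref{eqn:24122605a} while staying inside the prescribed pattern.
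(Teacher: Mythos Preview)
Your first paragraph is exactly the paper's argument: the paper's ``proof'' is the single sentence ``By Theorem~\ref{thm:24012120}'' preceding the proposition, together with the observations that~\eqref{eqn:minK} is automatic (Remark~\ref{rem:K_mult_one}) and that~\eqref{eqn:241226} is equivalent to~\eqref{eqn:24122605a} (Lemma~\ref{lem:23102202}); Sun--Zhu then pins the dimension to~$1$.

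Your second paragraph, however, does not work as written. Condition~\eqref{eqn:24122605a} is the \emph{equality} $\lambda_i=\nu_i$ for every $i$, not an open interlacing domain: there is no room to translate $\lambda$ while holding $\nu$ fixed and remaining in this ``pattern''. Theorems~\ref{thm:23081404} and~\ref{thm:24102601} move $\tau$ but not $\tau'$, so a single $\pm f_i$ step already destroys~\eqref{eqn:24122605a}; the connecting-path argument in the spirit of Lemma~\ref{lem:24120608} has no admissible region to live in. (Simultaneously translating both sides would require the analogous translation theorems on the $\pi$-side, which the paper does not provide.) Your parity remark is also not quite right: the infinitesimal characters~\eqref{eqn:GLinflmd} mix the odd entries $\pm\lambda_i/2$ with the ``continuous'' block $\tfrac12(n-2\ell-1,\dots)$, so whether $\tau_i-\tau_j'$ lands in $\mathbb{Z}$ or $\mathbb{Z}+\tfrac12$ depends on the parity of $n$ and on which coordinates are involved.

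In practice the paper needs Proposition~\ref{prop:24012302} only as a seed for Corollary~\ref{cor:241120Osaka} and Theorem~\ref{thm:12}, whose hypotheses already impose the good-range conditions; in that regime Theorem~\ref{thm:24012120} applies directly and your first paragraph suffices. If you want to cover possibly singular parameters as well, the cleaner route is the alternative Mackey argument of Remark~\ref{rem:250209}, which bypasses the non-singularity hypothesis entirely.
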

\begin{remark}
\label{rem:250209}
Alternatively, we can prove Proposition~\ref{prop:24012302} by relying on the isomorphism  (\ref{eqn:GLrealcomplex}) and Mackey theory.
To do this, we use the fact that the $G'$-action on the generalized real flag manifold $G/P_\ell$ has an open dense orbit, and that the isotropy subgroup is contained in $P'_\ell$,
which is a parabolic subgroup of $G$ of the same type.
\end{remark}

\medskip
\subsection{Jumping Fences}
\label{subsec:jump}
~~~\newline
In this section, 
 we analyze a phenomenon in which a certain parity condition allows us to \lq\lq{jump the fence}\rq\rq, 
 of the interleaving pattern 
 in Theorems~\ref{thm:23081404} and~\ref{thm:24102601}.  
We discover that this phenomenon indeed occurs for some geometric settings in the context of symmetry breaking for $GL(n, {\mathbb{R}}) \downarrow GL(n-1, {\mathbb{R}})$.  
As a result, we provide a refinement of the (non-)vanishing results of symmetry breaking.

We begin with the setting where
$\Pi_{\ell}(\lambda)$ are irreducible unitary representations of $G$,
 and $\pi_{k}(\nu)$ are those of $G'$, 
with $0 \le 2\ell \le n$ and $0 \le 2 k \le n-1$, 
 as introduced in Sections \ref{subsec:PlG} and \ref{subsec:qlG}.  
In this generality, 
 we impose a slightly stronger
 than the good range condition \eqref{eqn:good}, 
 that is, 
 the following condition on the parameter 
 $\lambda_1$, $\dots$, $\lambda_{\ell}$: 
\begin{equation}
\label{eqn:sgood}
 \lambda_1 > \lambda_2> \cdots > \lambda_{\ell}
 >\max(n-2\ell-1, n-2k-3, 0).  
\end{equation}

\begin{remark}
\label{rem:24111606}
For the application of Corollary~\ref{cor:241120Osaka}  to Theorem~\ref{thm:12}, we use the case where $\ell = k$.
In this case, or more generally, if $\ell \le k+1$, 
 the condition \eqref{eqn:sgood} reduces to the good range condition \eqref{eqn:good}.  
\end{remark}

\begin{corollary}
\label{cor:241120Osaka}
Let $\nu \in (2 {\mathbb{Z}}+1)^{k}$
 satisfying 
\begin{equation}
\label{eqn:nugood}
 \nu_1 >\nu_2 > \cdots > \nu_k > \max(0, n-2 k -2).  
\end{equation}
Then the following two conditions are equivalent:

{\rm{(i)}}\enspace
there exists $\lambda \in (2 {\mathbb{Z}}+1)^\ell$
 satisfying \eqref{eqn:sgood}
 such that 
\[
  \invHom{G'}{\Pi_{\ell}(\lambda)^{\infty}|_{G'}}{\pi_k(\nu)^{\infty}} \ne \{0\}; 
\] 

{\rm{(ii)}}\enspace
for every $\lambda \in (2 {\mathbb{Z}}+1)^{\ell}$ satisfying \eqref{eqn:sgood}, 
one has 
\[
\invHom{G'}{\Pi_{\ell}(\lambda)^{\infty}|_{G'}}{\pi_k(\nu)^{\infty}} \ne \{0\}.  
\]
\end{corollary}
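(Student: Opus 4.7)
The plan is to prove Corollary~\ref{cor:241120Osaka} by iterated application of the translation theorems \ref{thm:23081404} and \ref{thm:24102601}, exploiting the \emph{jumping fences} phenomenon guaranteed by the parity assumptions $\lambda\in(2\mathbb{Z}+1)^\ell$ and $\nu\in(2\mathbb{Z}+1)^k$. The implication (ii)$\Rightarrow$(i) is immediate once one knows that the admissible region of $\lambda$'s satisfying \eqref{eqn:sgood} is non-empty, so the real content is (i)$\Rightarrow$(ii): starting from one $\lambda^{(0)}$ in that region for which $\invHom{G'}{\Pi_\ell(\lambda^{(0)})^\infty|_{G'}}{\pi_k(\nu)^\infty}\ne 0$, propagate the non-vanishing to every other admissible $\lambda$.

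Write $\tau'$ for the ${\mathfrak{Z}}({\mathfrak{g}}_{\mathbb{C}}')$-infinitesimal character of $\pi_k(\nu)$, whose entries consist of the outer block $\{\pm\nu_b/2:1\le b\le k\}$ together with the middle block $\{(n-2k-2-2c)/2:0\le c\le n-2k-2\}$. First I would observe that the admissible region $\{\lambda\in(2\mathbb{Z}+1)^\ell:\lambda_1>\cdots>\lambda_\ell>\max(n-2\ell-1,n-2k-3,0)\}$ is connected under the elementary lattice moves $\lambda\mapsto\lambda\pm 2e_a$ that stay inside the region; this is a routine sorting argument. Each such move corresponds, via the Harish-Chandra parameter, to shifting the outer entry $\tau_a=\lambda_a/2$ of the infinitesimal character of $\Pi_\ell(\lambda)$ by $\pm 1$, together with the paired shift $\mp 1$ of $-\lambda_a/2$, and is realized by successive application of the translation functors $\phi_\tau^{\tau\pm f_a}$ and $\phi_\tau^{\tau\mp f_{n+1-a}}$ appearing in Theorems~\ref{thm:23081404} and~\ref{thm:24102601}.

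The crux is to verify the fence condition at every intermediate step. Since $\nu_b$ is odd, the fence values $\tau'_j\mp\tfrac12$ arising from the outer block of $\tau'$ lie in $\mathbb{Z}$ and hence never coincide with the half-integer $\pm\lambda_a/2$ being shifted. The fence values contributed by the middle block are $(n-2k-2-2c)/2\mp\tfrac12$: when $n$ is odd these are integers, so again no conflict arises; when $n$ is even they are half-integers, but the corresponding forbidden values of $\lambda_a$ all lie in the set $\{-(n-2k-1),-(n-2k-3),\dots,n-2k-3\}$, precisely the range excluded by the strict bound $\lambda_\ell>n-2k-3$ built into \eqref{eqn:sgood} (combined with the strict decrease $\lambda_1>\cdots>\lambda_\ell$). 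The further bound $\lambda_\ell>n-2\ell-1$ in \eqref{eqn:sgood} guarantees that every intermediate $\Pi_\ell(\lambda^{(j)})$ has a non-singular infinitesimal character, so the generalized ${\mathfrak{Z}}({\mathfrak{g}}_{\mathbb{C}})$-eigenspaces in $\Pi_\ell(\lambda^{(j)})\otimes\mathbb{C}^n$ and $\Pi_\ell(\lambda^{(j)})\otimes(\mathbb{C}^n)^\vee$ are honest eigenspaces, as required by the hypotheses of the translation theorems.

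Iterating Theorems~\ref{thm:23081404}(1) and~\ref{thm:24102601}(1) along a path of elementary moves connecting $\lambda^{(0)}$ to an arbitrary admissible $\lambda$ then propagates the non-vanishing, yielding (i)$\Rightarrow$(ii). The main obstacle is the parity bookkeeping in the middle-block analysis: one must verify that the numerical threshold $n-2k-3$ in \eqref{eqn:sgood} is exactly calibrated to keep every admissible $\lambda_a$ above all the half-integer fences produced by the middle block of $\tau'$ in the case $n$ even, so that the translation functors genuinely allow one to \emph{jump} these fences rather than cross them. A subsidiary point is the eigenspace hypothesis for the tensor products, which follows from non-singularity once one checks that distinct shifts $\tau+f_i$ and $\tau+f_j$ cannot be Weyl-equivalent inside the admissible region.
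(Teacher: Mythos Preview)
Your approach is exactly the one the paper intends: the section surrounding Corollary~\ref{cor:241120Osaka} explicitly frames it as an application of Theorems~\ref{thm:23081404} and~\ref{thm:24102601} in which the parity hypotheses $\lambda\in(2\mathbb{Z}+1)^\ell$, $\nu\in(2\mathbb{Z}+1)^k$ permit one to ``jump the fences''. The paper does not spell out the bookkeeping (details are deferred to \cite{HKS, xks25, xks25b}), so your sketch is in fact more explicit than what appears in the text.

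There is one slip in the fence enumeration. You assert that the middle-block fences force $\lambda_a$ into $\{-(n-2k-1),\dots,n-2k-3\}$. This is correct for the translation $\phi_\tau^{\tau+f_i}$ (shifting $\lambda_a/2$ up; fence at $\tau'_j-\tfrac12$), but for $\phi_\tau^{\tau-f_i}$ (shifting down; fence at $\tau'_j+\tfrac12$) the forbidden set is $\{-(n-2k-3),\dots,n-2k-1\}$, whose top value $n-2k-1$ does satisfy $\lambda_a>n-2k-3$ and hence can lie inside the admissible region when $n$ is even and $\ell>k$. So the threshold in \eqref{eqn:sgood} does not by itself exclude every fence. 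This does not break the argument, however: the fence at $\lambda_a=n-2k-1$ only obstructs the move $\lambda_a\mapsto\lambda_a-2=n-2k-3$, and that target already violates \eqref{eqn:sgood}, so no path between two admissible parameters ever needs that step. You should state this explicitly rather than claim the fence is absent.

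A second point worth making explicit is that each move $\lambda\mapsto\lambda\pm 2e_a$ is realized by \emph{two} translation steps (one on the entry $\lambda_a/2$, one on its mirror $-\lambda_a/2$), and the intermediate infinitesimal character is not of the form attached to any $\Pi_\ell(\lambda')$. You should check that this intermediate character is still non-singular---which it is, since the shifted entry stays strictly between its neighbours and strictly above the middle block by \eqref{eqn:sgood}---so that the eigenspace hypothesis of Theorems~\ref{thm:23081404} and~\ref{thm:24102601} continues to hold at the halfway point. With those two clarifications your argument is complete and matches the paper's.
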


Thus, 
 Corollary \ref{cor:241120Osaka} allows us 
 to tear down all the \lq\lq{fences}\rq\rq\
 of the weakly interleaving pattern given by Lemma \ref{lem:23102202}, 
resulting in the following result:
\begin{theorem}
\label{thm:12}
%Suppose $k=\ell-1$ or $\ell$.  
Suppose $2 \ell < n$.
Then 
\begin{equation}
\label{eqn:mainbranch}
\dim \invHom {G'}{\Pi_{\ell}(\lambda)^{\infty}|_{G'}}{\pi_\ell(\nu)^{\infty}}=1
\end{equation}
for any $\lambda, \nu \in (2{\mathbb{Z}}+1)^{\ell}$
 satisfying the regularity conditions:
\begin{align*}
&\lambda_1 > \lambda_2> \cdots > \lambda_{\ell} > n-2 \ell -1,
%\max(n-2\ell-1, 0), 
\\
&\nu_1 > \nu_2> \cdots > \nu_\ell > n-2\ell -1.
%\max(0, n-2 \ell-2).   
\end{align*}
\end{theorem}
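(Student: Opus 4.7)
\medskip

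\noindent\textbf{Proof plan for Theorem~\ref{thm:12}.}
The strategy is to anchor non-vanishing of the SBO space at the diagonal parameter $\lambda=\nu$, where it is produced by a convergent period integral, and then to propagate non-vanishing to the entire regular range by translation functors, using the parity of $(2\mathbb{Z}+1)^{\ell}$ to \emph{tear down all fences}. Multiplicity one then upgrades non-vanishing to dimension exactly $1$.

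First, I would fix an arbitrary $\nu\in(2\mathbb{Z}+1)^{\ell}$ with $\nu_1>\cdots>\nu_{\ell}>n-2\ell-1$ and apply Proposition~\ref{prop:24012302} at the anchor choice $\lambda_i=\nu_i$. The hypothesis \eqref{eqn:24122605a} of Lemma~\ref{lem:23102202} is then automatic, so Theorem~\ref{thm:24012120} (period-integral non-vanishing) gives $\invHom{G'}{\Pi_{\ell}(\nu)^{\infty}|_{G'}}{\pi_{\ell}(\nu)^{\infty}}\neq\{0\}$. This is the only input for which a non-zero SBO is constructed directly; every other case will be reached by translation.

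Second, I would invoke Corollary~\ref{cor:241120Osaka} with $k=\ell$ to deform $\lambda$ away from the diagonal while keeping $\nu$ fixed. With $k=\ell$, Remark~\ref{rem:24111606} tells us that the strengthened good-range condition \eqref{eqn:sgood} coincides with the ordinary good-range condition \eqref{eqn:good}, which is precisely the assumption $\lambda_{\ell}>n-2\ell-1$ in Theorem~\ref{thm:12}; and the hypothesis \eqref{eqn:nugood} on $\nu$ is strictly implied by our regularity condition $\nu_{\ell}>n-2\ell-1$. Therefore the equivalence (i)$\Leftrightarrow$(ii) of Corollary~\ref{cor:241120Osaka} promotes the anchor non-vanishing to non-vanishing of $\invHom{G'}{\Pi_{\ell}(\lambda)^{\infty}|_{G'}}{\pi_{\ell}(\nu)^{\infty}}$ for every $\lambda$ in the claimed range. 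The Sun--Zhu theorem \cite{xsunzhu} for the pair $(GL(n,\mathbb{R}),GL(n-1,\mathbb{R}))$ then forces the dimension to be exactly $1$, proving \eqref{eqn:mainbranch}.

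The substantive content lies behind Corollary~\ref{cor:241120Osaka}, and this is where I expect the real obstacle to sit. The crucial feature is that the infinitesimal character $\tau$ of $\Pi_{\ell}(\lambda)$ given by \eqref{eqn:GLinflmd} and the infinitesimal character $\tau'$ of $\pi_{\ell}(\nu)$ have entries of matching parity in $\tfrac12\mathbb{Z}$ because $\lambda,\nu\in(2\mathbb{Z}+1)^{\ell}$; hence $\tau_i-\tau_j'\in\mathbb{Z}$ for all $i,j$, and the exceptional locus $\{\tau_i=\tau_j'\pm\tfrac12\}$ appearing in Theorems~\ref{thm:23081404} and \ref{thm:24102601} is \emph{empty}. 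Consequently every unit translation $\tau\leadsto\tau\pm f_i$ preserves non-vanishing of the Hom space, which is the \emph{jumping fences} phenomenon. The step that still requires care is therefore combinatorial: verifying that any two admissible parameters $\lambda,\lambda'$ in $(2\mathbb{Z}+1)^{\ell}$ satisfying the good-range condition can be joined by a sequence of $\pm f_i$-translations whose intermediate infinitesimal characters remain non-singular and stay in the good range, so that each intermediate member of the coherent family supplied by Theorem~\ref{thm:24120703} remains irreducible and non-zero; once this path exists, iterated application of Theorems~\ref{thm:23081404} and \ref{thm:24102601} closes the argument.
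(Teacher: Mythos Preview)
Your proof plan is correct and follows exactly the paper's route: anchor at the diagonal $\lambda=\nu$ via Proposition~\ref{prop:24012302}, then apply Corollary~\ref{cor:241120Osaka} with $k=\ell$ (so that \eqref{eqn:sgood} reduces to \eqref{eqn:good} by Remark~\ref{rem:24111606}) to propagate non-vanishing over the whole good range, and finish with Sun--Zhu multiplicity one.

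One correction to your heuristic for Corollary~\ref{cor:241120Osaka}: the blanket assertion that $\tau_i-\tau_j'\in\mathbb{Z}$ for \emph{all} $i,j$ is false. The infinitesimal character \eqref{eqn:GLinflmd} has two kinds of entries, namely $\pm\tfrac{\lambda_i}{2}\in\mathbb{Z}+\tfrac12$ and the middle block $\tfrac12(n-2\ell-1),\ldots,\tfrac12(1+2\ell-n)$; the latter lies in $\mathbb{Z}$ or $\mathbb{Z}+\tfrac12$ according to the parity of $n$, and for $\tau'$ one replaces $n$ by $n-1$, so one of the two middle blocks always sits in the opposite parity class. Hence the exceptional set $\{\tau_i=\tau_j'\pm\tfrac12\}$ of Theorems~\ref{thm:23081404}--\ref{thm:24102601} is \emph{not} globally empty. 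What actually makes the fence-jumping work is the combination of the odd parity of $\lambda,\nu$ (which rules out collisions between the moving entries $\pm\tfrac{\lambda_i}{2}$ and the $\pm\tfrac{\nu_j}{2}$ entries of $\tau'$) together with the good-range inequalities $\lambda_\ell>n-2\ell-1$ and $\nu_\ell>n-2\ell-2$ (which keep the moving entries separated from the static middle blocks on both sides). Your closing remark that the combinatorial path requires care is well placed; the mechanism is genuinely more delicate than pure parity.
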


We already know that the left-hand side of \eqref{eqn:mainbranch}
is either 0 or 1, according to the multiplicity-freeness theorem \cite{xsunzhu}
 for $GL(n,{\mathbb{R}}) \downarrow GL(n-1, {\mathbb{R}})$,
since $\Pi_{\ell}(\lambda)$ and $\pi_\ell(\nu)$ are irreducible
 as $G$- and $G'$-modules, 
 respectively.  
Our claim is that the multiplicity is non-zero, as a consequence of \lq\lq{jumping all the fences}\rq\rq.

%%%%%%%%%%%%%%%%%%%%%%%%%%%%%%%%%%%%%%%%%%%%%%%%%%%%%%%%%%%%%%%%%

\bigskip

\section{Restricting Discrete Series Representations for the Symmetric Spaces $U(p,q)/(U(r, s) \times U(p-r, q-s))$ to 
the Subgroup $U(p-1, q)$}
\label{sec:upq2}
%%%%%%%%%%%%%%%%%%%%%%%%%%%%%%%%%%%%%%%%%%%%%%%%%%%%%%%%%%%%%%%%%
In this section, 
 we revisit the case  where 
 \[(G, G')=(U(p,q), U(p-1, q)),
 \]
 and discuss the branching of the restriction $\Pi|_{G'}$, 
 where $\Pi$ is a {\it{non-tempered}} irreducible representation of $G$.  
Specifically, 
 we consider a discrete series representation $\Pi$ for the symmetric space
\[
  G/H=U(p,q)/(U(r, s) \times U(p-r, q-s)),
\]
and prove that $\invHom{G'}{\Pi^{\infty}|_{G'}}{\pi^{\infty}} \ne 0$
 for some family of irreducible representations $\pi \in \widehat{G'}$,
 which are not necessarily tempered. 
 
 The irreducible unitary representations $\pi$ of the subgroup $G'$ for which $\invHom{G'}{\Pi^{\infty}|_{G'}}{\pi^{\infty}} \ne 0$ were completely determined when $(r,s)=(0,1)$, as a particular case of \cite[Thm.\ 3.4]{xkpja93}, 
 which corresponds to the discretely decomposable case. 
 In the case where $\pi$ occurs as a discrete series representation for a symmetric space $G'/H'$, a non-vanishing result was recently proven in \cite{OS25}
 when $(r,s)=(1,0)$.  
 
We provide a non-vanishing theorem in Theorem \ref{thm:250126FJ}
for the general case of $(p,q,r,s)$
under a certain interleaving condition on the parameters.
Our proof again utilizes the non-vanishing theorem of the period integral
for specific parameters, 
 as stated in Theorem \ref{thm:24012120}, 
 as well as the non-vanishing result of symmetry breaking under translations inside \lq\lq{fences}\rq\rq, 
 as stated in Theorem \ref{thm:24120703}.  

\medskip
\subsection{A Family of (Non-Tempered) Irreducible Unitary Representations of $U(p,q)$}
~~~\newline
In this subsection, 
 we define a family of irreducible unitary representations of $G=U(p,q)$.  
In the next subsection, 
 Proposition \ref{prop:GHupq} shows that
 that any discrete series representation for the symmetric space
 $X=U(p, q)/(U(r, s) \times U(p-r, q-s))$
 is of this form when $2 r \le p$ and $2s \le q$.

Let ${\mathfrak{j}}$ be a compact Cartan subalgebra,
$\{H_1, \dots, H_{p+q}\}$ be the standard basis $\sqrt{-1}{\mathfrak{j}}$, 
 and $\{f_1, \dots, f_{p+q}\}$ its dual basis.  
We fix a positive system of 
$
   \Delta({\mathfrak{k}}_{\mathbb{C}}, {\mathfrak{j}}_{\mathbb{C}})
$ by defining
\[
  \Delta^+({\mathfrak{k}}_{\mathbb{C}}, {\mathfrak{j}}_{\mathbb{C}})
 =\{f_i-f_j: 1 \le i < j \le p\text{ or }p+1 \le i < j \le p+q\}.  
\]

Given $Z=(z_1, \dots, z_{p+q}) \in \sqrt{-1}{\mathfrak{j}} \simeq {\mathbb{R}}^{p+q}$, 
 we define a $\theta$-stable parabolic subalgebra $\mathfrak{q}\equiv {\mathfrak{q}}(Z)={\mathfrak{l}}+{\mathfrak{u}}$
 of ${\mathfrak{g}}_{\mathbb{C}}={\mathfrak{gl}}(p+q, {\mathbb{C}})$
 such that the set of weights of the unipotent radical $\mathfrak{u}$ is given by
\[
  \Delta({\mathfrak{u}}, {\mathfrak{j}}_{\mathbb{C}})
  =\{\alpha \in \Delta({\mathfrak{g}}_{\mathbb{C}}, {\mathfrak{j}}_{\mathbb{C}}):
\alpha(Z)>0\}.  
\]
Any $\theta$-stable parabolic subalgebra of ${\mathfrak{g}}_{\mathbb{C}}$ is $K$-conjugate to ${\mathfrak{q}}(Z)$ for some $Z \in {\mathbb{R}}_{\ge}^p \times {\mathbb{R}}_{\ge}^q$.  
We are particularly interested in the following:
\begin{setting}
\label{set:rs}
Let $0 \le 2 r \le p$, 
$0 \le 2s \le q$, 
 and 
\begin{equation}
\label{eqn:xyz}
Z=(x_1, \dots, x_r, 0^{p-2r}, -x_r, \dots, -x_1;y_1, \dots, y_s, 0^{q-2s}, -y_s, \dots, -y_1)
%Z=(x_1, \dots, x_r, \underbrace{0, \dots, 0}_{p-2r}, -x_r, \dots, -x_1;y_1, \dots, y_s, \underbrace{0, \dots, 0}_{q-2s}, -y_s, \dots, -y_1)
\end{equation}
with $x_1 > \cdots>x_r >0$, 
$y_1>\cdots, y_s >0$, 
and $x_i \ne y_j$ for any $i$, $j$.  
\end{setting}
 
In this case, the (real) Levi subgroup $L$, 
 the normalizer of the $\theta$-stable parabolic subalgebra ${\mathfrak{q}}(Z)$ in $G$, 
 depends only on $r$ and $s$, 
 and is given by 
\begin{equation}
\label{eqn:Lpq}
   L \equiv L^U_{p,q;r,s} \simeq {\mathbb{T}}^{2r+2s} \times U(p-2r, q-2s).  
\end{equation}

\begin{lemma}
\label{lem:250129}
Let $G=U(p,q)$. 
We fix $r$ and $s$ such that
 $2r \le p$ and $2s \le q$.  
Then, there is a one-to-one correspondence
 among the following three objects:
\newline
{\rm{(i)}}\enspace
$\theta$-stable parabolic subalgebras $\mathfrak{q}\equiv{\mathfrak{q}}(Z)$, 
 where $Z$ is of the form as given in Setting \ref{set:rs}.  
\newline
{\rm{(ii)}}\enspace
Interleaving patterns $D \in {\mathfrak{P}}({\mathbb{R}}^{r,s})$
in ${\mathbb{R}}_{>}^r \times {\mathbb{R}}_{>}^s$.  
\newline
{\rm{(iii)}}\enspace
Data $\kappa=\{(r_j), (s_j), M\}$
 with $1 \le M \le \min(r,s)$ and 
\begin{equation}
\label{eqn:rs}
  0 \le r_1 < \cdots<r_{M-1}<r_M=r, 
\,\,
  0 < s_1 < \cdots <s_{M-1}\le s_M=s.  
\end{equation}
\end{lemma}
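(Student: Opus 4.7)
The plan is to establish the two bijections $\mathrm{(i)} \leftrightarrow \mathrm{(ii)}$ and $\mathrm{(ii)} \leftrightarrow \mathrm{(iii)}$ separately and then compose them.

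For $\mathrm{(i)} \leftrightarrow \mathrm{(ii)}$, I would start from the observation that $\mathfrak{q}(Z)$ depends only on the sign pattern of $\alpha(Z) = z_i - z_j$ as $\alpha$ runs over the roots $f_i - f_j$ of $({\mathfrak{g}}_{\mathbb{C}}, {\mathfrak{j}}_{\mathbb{C}})$. In the parametrization \eqref{eqn:xyz} with $x_1 > \cdots > x_r > 0$ and $y_1 > \cdots > y_s > 0$, the signs of all differences within the $U(p)$-block or within the $U(q)$-block, as well as the signs of all comparisons between a positive entry and a zero or negative entry, are already forced by the prescribed form of $Z$. What remains free is the interlacing between the positive $x_i$'s and the positive $y_j$'s; by the symmetry $Z \mapsto -Z$ built into \eqref{eqn:xyz}, this interlacing simultaneously determines the one between the $-x_i$'s and the $-y_j$'s in the lower halves. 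Since $x_i \ne y_j$ forbids ties, this data is exactly an interlacing pattern $D \in \mathfrak{P}({\mathbb{R}}^{r,s})$ in the sense of Definition \ref{def:int}. Conversely, any such $D$ is realized by some admissible $Z$, and two $Z$'s yielding the same $D$ give the same $\mathfrak{u}$ and hence the same $\mathfrak{q}(Z)$.

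For $\mathrm{(ii)} \leftrightarrow \mathrm{(iii)}$, the pattern $D$ is a shuffle of the decreasing sequences $x_1 > \cdots > x_r$ and $y_1 > \cdots > y_s$. Writing the merged sequence in decreasing order and partitioning it into maximal runs of $x$'s and of $y$'s yields an alternating sequence of blocks. After padding, if necessary, by an empty initial $x$-block (when the largest element is a $y$) and by an empty final $y$-block (when the smallest element is an $x$), one obtains an alternating sequence $X_1, Y_1, X_2, Y_2, \ldots, X_M, Y_M$. Defining $r_j$ (resp.\ $s_j$) as the cumulative number of $x$'s used through $X_j$ (resp.\ $y$'s through $Y_j$), one gets $0 \le r_1 < \cdots < r_M = r$, with $r_1 = 0$ iff padding was needed at the top, and $0 < s_1 < \cdots < s_{M-1} \le s_M = s$, with $s_{M-1} = s_M$ iff padding was needed at the bottom. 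This matches \eqref{eqn:rs} and is manifestly invertible: from $\kappa$ one reconstructs the block structure, and hence $D$.

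The main obstacle I anticipate is the careful bookkeeping at the endpoints: one must verify that the asymmetric conditions $0 \le r_1$ and $s_{M-1} \le s_M$, together with the strict interior inequalities $r_{j-1} < r_j$ and $s_{j-1} < s_j$, correspond precisely to the four possible combinations of whether the top and bottom runs of $D$ are $x$-runs or $y$-runs, while any interior equality would violate the maximality of a run. The fact that the resulting Levi $L \simeq {\mathbb{T}}^{2r+2s} \times U(p-2r, q-2s)$ in \eqref{eqn:Lpq} is independent of $D$, depending only on $(p,q,r,s)$, serves as a useful consistency check on the bijection $\mathrm{(i)} \leftrightarrow \mathrm{(ii)}$.
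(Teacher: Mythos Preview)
Your proposal is correct and follows essentially the same route as the paper: the bijection $\mathrm{(i)}\leftrightarrow\mathrm{(ii)}$ is established by noting that $\mathfrak{q}(Z)$ depends only on the interlacing pattern of the $x_i$'s and $y_j$'s, and $\mathrm{(ii)}\leftrightarrow\mathrm{(iii)}$ by reading off the block structure of the shuffle as in \eqref{eqn:xypartition}. Your treatment of the endpoint bookkeeping (the possibly empty first $x$-block and last $y$-block, encoded by $r_1=0$ and $s_{M-1}=s_M$) is more explicit than the paper's, which simply writes out the pattern \eqref{eqn:xypartition} and says ``and vice versa''.
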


\begin{remark}
We allow the cases $r_1=0$ or $s_{M-1}=s_M$, 
 but assume that $s_1>0$ and $r_{M-1}< r_M$.  
\end{remark}

\begin{proof}
We describe the natural morphisms, 
which establish the one-to-one correspondence among (i), (ii) and (iii).  
\newline
(i) $\Leftrightarrow$ (ii)\enspace
By definition, 
 an interleaving pattern $D$ in ${\mathfrak{P}}({\mathbb{R}}^{r,s})$ defines
 a $\theta$-stable parabolic subalgebra ${\mathfrak{q}}(Z)$ via \eqref{eqn:xyz}.  
Conversely, 
 it is clear that the $\theta$-stable parabolic subalgebra ${\mathfrak{q}}(Z)$
 associated with $Z$ in Setting \ref{set:rs} depends solely on the interleaving pattern of $x$, $y$ in ${\mathbb{R}}_{>}^r \times {\mathbb{R}}_{>}^s$.  
\newline
(ii) $\Leftrightarrow$ (iii)\enspace
Given a condition in \eqref{eqn:rs}, 
 we associate the following interleaving pattern $D$ 
in ${\mathbb{R}}_{>}^r \times {\mathbb{R}}_{>}^s$
 defined by 
\begin{multline}
\label{eqn:xypartition}
 x_1 > \cdots>x_{r_1} > y_1>\cdots> y_{s_1} >x_{r_1+1}> \cdots>x_{r_2}>y_{s_1+1}> \cdots
\\
 \cdots > y_{s_{M-1}}> x_{r_{M-1}+1}> \cdots > x_{r_M}> y_{s_{M-1}+1}>\cdots >y_{s_M},   
\end{multline}
and vice versa.  
\end{proof}

Let $D$ be an interleaving pattern in ${\mathbb{R}}_{>}^r \times {\mathbb{R}}_{>}^s$ as in \eqref{eqn:xypartition}.
For $A \in {\mathbb{R}}$, 
 we set 
\[
  D_{>A}:=\{(x,y) \in D:x_i>A, y_j>A\text{ for any $i$, $j$}\}.  
\]

Suppose that $L={\mathbb{T}}^{2r+2s} \times U(p-2r, q-2s)$
 is the real Levi subgroup
 for the $\theta$-stable parabolic subalgebra ${\mathfrak{q}}$, 
 which is associated to an interleaving pattern
 $D \in {\mathfrak{P}}({\mathbb{R}}^{r,s})$ in Lemma \ref{lem:250129}.  
For $\lambda =(x,y) \in ({\mathbb{Z}}+\frac{p+q-1}2)^{r+s}$, 
 we define a one-dimensional representation of the double covering group of the torus ${\mathbb{T}}^{2(r+s)}$,
  to be denoted by ${\mathbb{C}}_{\widetilde\lambda}$,
such that its differential is given by the formula \eqref{eqn:xyz}.  
We extend it to a one-dimensional representation of ${\mathfrak{q}}={\mathfrak{l}}+{\mathfrak{u}}$, 
 by letting ${\mathfrak{u}}(p-2r, q-2s)+{\mathfrak{u}}$ act trivially.  
The character ${\mathbb{C}}_{\widetilde\lambda}$ 
 is in the fair range (respectively, in the good range) with respect to ${\mathfrak{q}}$ in the sense of \cite{Vogan84}, 
 if $\lambda \in D_{>0}$
 (respectively, $\lambda \in D_{>Q}$), 
 where we set
\[ 
  Q:=\frac 1 2 (p+q-1)-r-s.  
\] 

When $\lambda \in D_{>0}$, cohomological parabolic induction gives a unitarizable $({\mathfrak{g}}, K)$-module, 
 which is possibly zero (\cite{Vogan84}). 
It is irreducible if non-zero.  
Let $\Pi_{\lambda}$ denote the unitarization.  
The unitary representation $\Pi_{\lambda}$ is non-tempered 
 if $p \ne 2r$ and $q\ne 2s$.

In our normalization, 
 the ${\mathfrak{Z}}({\mathfrak{g}}_{\mathbb{C}})$-infinitesimal character of the $G$-module $\Pi_{\lambda}$ is given by 
\[
   \text{\eqref{eqn:xyz}} \oplus (Q,Q-1, \dots, 1-Q, -Q) \in {\mathbb{C}}^{p+q}/{\mathfrak{S}}_{p+q}.  
\]
When $D_{>Q}$, 
the general theory guarantees that $\Pi_{\lambda}$ is non-zero
and that the highest weight of its minimal $K$-type is given as follows:

\begin{alignat*}{2}
  (\mu_{\lambda})_i&=-(\mu_{\lambda})_{p+1-i}= \lambda_i+ \frac{-p+q+1}{2}+\ell_i \quad 
&&\text{for $1 \le i \le r$, }
\\
  (\mu_{\lambda})_{p+i}&=-(\mu_{\lambda})_{p+q+1-i}
 = \lambda_{r+i}+ \frac{p-q+1}{2}-\ell_{r+i} \quad 
&&\text{for $1 \le i \le s$, }  
\\
  (\mu_{\lambda})_{i}&=0\qquad 
&&\text{otherwise.}
\end{alignat*}

Here, we define $\ell_i\equiv \ell_i (D)\in {\mathbb{Z}}$
for $1 \le i \le r+s$, 
 depending on the interleaving pattern $D$, by
\begin{alignat*}{2}
\ell_i(D):=&\sharp \{x_k: x_k > x_i\}-\sharp \{y_k: y_k > x_i\}
\qquad
\text{for $1 \le i \le r$, }
\\
\ell_{r+i}(D):=&\sharp \{x_k: x_k > y_i\}-\sharp \{y_k: y_k > y_i\}
\qquad
\text{for $1 \le i \le s$.}
\end{alignat*}

\begin{example}
Let $(r, s)=(3,2)$
 and $D=\{x_1 > y_1 > y_2 >x_2>x_3\}$.  
Then 
\[
  \ell_1(D)=0,\,\ell_2(D)=-1,\, \ell_3(D)=0;
  \ell_4(D)=1,\, \ell_5(D)=0. 
\]
\end{example}

\medskip
\subsection{Discrete Series Representations for the Symmetric Space $U(p,q)/(U(r,s) \times U(p-r, q-s))$}
~~~\newline
Let $H=U(p_1,q_1) \times U(p_2,q_2)$
 be a natural subgroup of $G=U(p,q)$, 
 where $p_1 + p_2=p$ and $q_1+ q_2=q$.  
The symmetric space $G/H$ has a discrete series representation
 if and only if the rank condition (\ref{eqn:FJrank}) holds, that is, 
\begin{equation}
\label{eqn:rankFJ}
  \min(p_1, p_2) +\min(q_1, q_2) = \min(p_1 + q_1, p_2 + q_2).  
\end{equation} 
{}From now on, 
 without loss of generality, 
 we assume 
that 
\[
   H=U(r, s) \times U(p-r, q-s)\qquad
  \text{with $2 r \le p$ and $2 s \le q$}.
\]  
Discrete series representations for a reductive symmetric space $G/H$ are decomposed into families corresponding to $H^d$-closed orbits on the real flag variety of $G^d$. Here, 
$(G^d,H^d)$ is the dual symmetric pair of $(G,H)$, see \cite{FJ80, MO84}.
In the above setting, we have
$$
(G^d, H^d)=(U(r+s,p+q-r-s),U(r,p-r) \times U(s, q-s)),
$$ and 
there are $\frac{(r+s)!}{r! s!}$ closed orbits of the subgroup $H^d$ on the real flag variety of $G^d$. 
These orbits are parametrized by 
interleaving patterns
$\mathfrak{P}({\mathbb{R}}^{r,s})$
in ${\mathbb{R}}_>^r \times {\mathbb{R}}_>^s$.
 
\begin{proposition}
\label{prop:GHupq}
Suppose $0 \le 2r \le p$ and $0 \le 2s \le q$.  
Then the set of discrete series representations
\[
  \operatorname{Disc}\!\bigl(U(p,q)/(U(r,s) \times U(p-r, q-s))\bigr)
\]
can be described as the disjoint union
\[
  \coprod_{D \in {\mathfrak{P}}(\mathbb{R}^{r,s})}
  \bigl\{\Pi_{\lambda} : \lambda \in D_{>0} \cap (\mathbb{Z} + \tfrac{p+q-1}{2})^{\,r+s}\bigr\}.
\]
\end{proposition}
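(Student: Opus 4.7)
The plan is to combine the Flensted-Jensen/Matsuki-Oshima classification of discrete series for a reductive symmetric space with an orbit-by-orbit realization via cohomological parabolic induction. Throughout, set $X = G/H$ with $G = U(p,q)$, $H = U(r,s) \times U(p-r, q-s)$, and let $(G^d, H^d)$ be the dual symmetric pair as in the statement.

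First I would verify the rank condition \eqref{eqn:rankFJ}: under the hypothesis $2r \le p$ and $2s \le q$, both sides of the identity reduce to $r + s$, so $\operatorname{Disc}(X) \ne \emptyset$, and by \cite{FJ80, MO84} every $\Pi \in \operatorname{Disc}(X)$ arises as a derived functor module attached to an $H^d$-closed orbit on the real flag variety of $G^d$, paired with an integral Harish-Chandra parameter lying in an appropriate positive cone. The closed $H^d$-orbits correspond via Matsuki duality to the open $H$-orbits on $K/T$, and a direct count shows there are exactly $\frac{(r+s)!}{r!s!}$ of them, naturally parametrized by ${\mathfrak{P}}({\mathbb{R}}^{r,s})$. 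Tracing this correspondence through Lemma \ref{lem:250129}, each pattern $D$ yields the $\theta$-stable parabolic ${\mathfrak{q}}(Z)$ of Setting \ref{set:rs} with Levi $L^U_{p,q;r,s}$, and conversely.

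For each pattern $D$, I would realize the corresponding family as $\Pi_\lambda := {\mathcal{R}}_{\mathfrak{q}}^{S}({\mathbb{C}}_{\widetilde\lambda})$ for $\lambda \in D_{>0} \cap ({\mathbb{Z}} + \frac{p+q-1}{2})^{r+s}$. In the good range $\lambda \in D_{>Q}$, Vogan's general theory ensures non-vanishing, irreducibility, and unitarity, and direct comparison of the minimal $K$-type formula $\mu_\lambda$ stated above the proposition with the Flensted-Jensen/Matsuki-Oshima description of the minimal $K$-type attached to the chosen $H^d$-orbit identifies $\Pi_\lambda$ as a discrete series for $X$. This handles the good-range portion of the claimed disjoint union and simultaneously shows that distinct pairs $(D, \lambda)$ produce distinct representations, since the formulas for $(\mu_\lambda)_i$ depend non-trivially on both the combinatorial data $\ell_i(D)$ and on $\lambda$.

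The main obstacle is the extension from the good range $D_{>Q}$ to the full fair range $D_{>0}$, where Vogan's general theory a priori guarantees neither the non-vanishing nor the irreducibility nor the square-integrability of $\Pi_\lambda$. My strategy is to propagate the identification by coherent continuation within the Weyl chamber cut out by $D$: the discrete series for $X$ form a coherent family by \cite{MO84}, and, by the techniques of \cite{Kobayashi92} applied to ${\mathfrak{q}}$, so do the cohomologically induced modules $\Pi_\lambda$. Since both families remain non-zero and irreducible throughout $D_{>0}$ (a point that requires an explicit $K$-type argument, since the parameter drops below the good range) and coincide on the nonempty open subset $D_{>Q}$, they must agree on all of $D_{>0}$. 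Combined with the surjectivity afforded by Flensted-Jensen/Matsuki-Oshima and the injectivity already noted, this yields the claimed parametrization.
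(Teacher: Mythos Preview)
Your overall strategy---Flensted-Jensen/Matsuki--Oshima classification, parametrization of the closed $H^d$-orbits by interlacing patterns $\mathfrak{P}(\mathbb{R}^{r,s})$, and identification of each family with cohomologically induced modules from the $\theta$-stable parabolic of Setting~\ref{set:rs}---matches the paper's approach, which is not proved in detail but rather sketched in the paragraph preceding the proposition with a pointer to \cite{MO84} and \cite{Kobayashi92}.

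There is, however, a genuine error in your extension step. You assert that ``both families remain non-zero and irreducible throughout $D_{>0}$'' and propose to prove this by coherent continuation plus a $K$-type argument. This is false in the $U(p,q)$ setting: the paper states explicitly, immediately after the proposition, that $\Pi_\lambda$ \emph{may vanish} for $\lambda \in D_{>0} \setminus D_{>Q}$, and that the precise non-vanishing condition is a complicated system of inequalities depending on $D$ (see \cite[Chap.~5]{Kobayashi92}). The contrast with the $GL$ case is instructive: for Proposition~\ref{prop:DiscGH} the paper does remark that non-vanishing and irreducibility happen to persist beyond the good range, but no such claim is made here, and indeed it fails. Consequently your coherent-continuation argument cannot go through as written, since there is no uniform non-vanishing to propagate.

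The proposition should therefore be read as a parametrization \emph{up to vanishing}: the Matsuki--Oshima description already produces the list $\{\Pi_\lambda : \lambda \in D_{>0}\}$ over all $D$, with the understanding that $\operatorname{Disc}(G/H)$ consists precisely of the non-zero members. Your task reduces to matching the Matsuki--Oshima parameters with the stated ones and identifying their derived functor modules with the $\Pi_\lambda$ of the previous subsection; no separate non-vanishing argument on $D_{>0} \setminus D_{>Q}$ is required (or available).
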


As mentioned in the previous subsection,  
 $\Pi_{\lambda}$ may vanish if $\mathbb{C}_{\tilde{\lambda}}$ is not in the good range, specifically, if $\lambda \in D_{>0} \setminus D_{>Q}$. 
The condition for the non-vanishing of $\Pi_{\lambda}$
 involves a number of inequalities of $\lambda$ that depend heavily on $D\in \mathfrak{P}({\mathbb{R}}^{r,s})$
 (see \cite[Chap.\ 5]{Kobayashi92}).

\medskip 

\subsection{Branching for $U(p,q) \downarrow U(p-1,q)$}
~~~\newline
We are ready to state our main results of this section. 
\begin{theorem}
\label{thm:250126FJ}
Suppose that $0 \le 2r \le p-1$, 
 $0 \le 2s \le q$
and $D, D' \in {\mathfrak{P}}({\mathbb{R}}^{r,s})$.
 Let ${\mathfrak{q}}$ be the $\theta$-stable parabolic subalgebra of ${\mathfrak{g}}_{\mathbb{C}}$
 and $(\{r_i\}, \{s_i\}, M)$ be the data, associated with $D$,
 as in Lemma \ref{lem:250129}.  
Similarly, 
 let ${\mathfrak{q}}'$ be the $\theta$-stable parabolic subalgebra of ${\mathfrak{g}}_{\mathbb{C}}'$
 and $(\{r_j'\}, \{s_j'\}, M')$ be the data,
 associated with $D'$.   
 We set
$Q=\frac 1 2 (p+q-1)-r-s$ and $Q'=Q-\frac{1}{2}$.
%$Q'=\frac 1 2 (p+q-2)-r'-s'$.  

Assume that $D=D'$, or equivalently that
$M'=M$, 
$r_i'=r_i$ $(1 \le i \le M)$ and $s_i'=s_i$ $(1 \le i \le M)$.  
Then we have the following identity:
\begin{equation}
\label{eqn:GHupq1}
  \dim \invHom{G'}{\Pi_{\lambda}^{\infty}|_{G'}}{\pi_{\nu}^{\infty}}=1, 
\end{equation}
 if $\lambda =(x,y) \in D_{>Q} \cap ({\mathbb{Z}}+ Q)^{r+s}$
 and $\nu =(\xi, \eta)\in D'_{>Q'} \cap ({\mathbb{Z}}+Q')^{r'+s}$ 
 satisfy the following interleaving pattern:
 \begin{multline*}
 x_1 >\xi_1>\dots
 %x_2>\xi_2>\dots >
 >x_{r_1}>\xi_{r_1}>\eta_1>y_1>\dots>\eta_{s_1}>y_{s_1}>
 \\
>x_{r_1 + 1} >\xi_{r_1+1} > \dots > x_{r_2}>\xi_{r_2}>\eta_{s_1+1}>y_{s_1+1}>\dots>\eta_{s_2}>y_{s_2}>
\\
\dots > x_{r_M}>\xi_{r_M}>\eta_{s_{M-1}+1}>y_{s_{M-1}+1}>\dots>\eta_{s_M}>y_{s_M}.
 \end{multline*}
 \end{theorem}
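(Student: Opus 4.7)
The plan is to establish the theorem in two stages, following the template developed in Section~\ref{sec:GHgeneral}. First, we identify a distinguished base point $(\lambda_0,\nu_0)$ in the prescribed interlacing region at which the symmetry breaking operator (SBO) is non-zero, using the period integral criterion of Theorem~\ref{thm:24012120}. Second, we propagate the non-vanishing to every $(\lambda,\nu)$ satisfying the stated interlacing pattern via the coherent continuation principle of Theorem~\ref{thm:24120703}. The multiplicity-one assertion then follows from the multiplicity-freeness theorem \cite{xsunzhu} for the pair $(U(p,q),U(p-1,q))$.

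For the base case, we would choose $(\lambda_0,\nu_0) \in (D_{>Q}\cap(\mathbb{Z}+Q)^{r+s}) \times (D'_{>Q'}\cap(\mathbb{Z}+Q')^{r+s})$ whose coordinates are ``tightly aligned'' within each block determined by the data $(\{r_i\},\{s_i\},M)$---concretely, $\xi_i = x_i - \tfrac{1}{2}$ and $\eta_j = y_j + \tfrac{1}{2}$, up to sign conventions fixed by the block positions. Under this choice, together with the assumption $D=D'$ (so that the shift integers satisfy $\ell_i(D)=\ell_i(D')$), the explicit formulas for the highest weights of $\mu(\Pi_{\lambda_0})$ and $\mu'(\pi_{\nu_0})$ recalled in Section~\ref{sec:upq2} become compatible under the branching $U(p)\times U(q)\downarrow U(p-1)\times U(q)$ in the strongest possible sense: condition \eqref{eqn:minK} is satisfied by Weyl's classical interlacing applied to the $U(p)\downarrow U(p-1)$ factor, while the highest weight vector of $\mu(\Pi_{\lambda_0})$ lies in the $\mu'(\pi_{\nu_0})$-isotypic subspace, verifying \eqref{eqn:241226} (see also Remark~\ref{rem:K_mult_one}).

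With these conditions in place, Theorem~\ref{thm:24012120} applied to the pair $(X,Y)=(G/H,G'/H')$ produces a non-zero period integral, and consequently a non-zero SBO $\Pi_{\lambda_0}^\infty \to \pi_{\nu_0}^\infty$. To propagate, we first hold $\nu=\nu_0$ fixed and form the coherent family $\Pi_\bullet : \lambda_0 + \mathbb{Z}^{p+q} \to \mathcal{V}(G)$; the good-range assumption $\lambda_0 \in D_{>Q}$ guarantees non-singularity of $\lambda_0$, so the family is irreducible on each Weyl chamber. Theorem~\ref{thm:24120703} then yields $[\Pi_\lambda^\infty|_{G'}:\pi_{\nu_0}^\infty] = [\Pi_{\lambda_0}^\infty|_{G'}:\pi_{\nu_0}^\infty] \ne 0$ for every $\lambda$ satisfying the stated interlacing pattern with $\nu_0$. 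A symmetric argument---applying Theorems~\ref{thm:23081404} and~\ref{thm:24102601} to translate $\pi$ through a coherent family of $G'$-modules and invoking the analog of Lemma~\ref{lem:24120608} in the $\nu$ variable---extends the non-vanishing to arbitrary $\nu$ within the prescribed region. The multiplicity-freeness theorem forces the constant value to be exactly one.

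The main obstacle is the explicit verification of the minimal-$K$-type conditions \eqref{eqn:minK} and \eqref{eqn:241226} at the base point $(\lambda_0,\nu_0)$. This requires a careful bookkeeping of the integer shifts $\ell_i(D)$ appearing in the formula for $\mu_\lambda$, together with a direct comparison of the $U(p)\times U(q)$-highest weight of $\Pi_{\lambda_0}$ against the $U(p-1)\times U(q)$-highest weight of $\pi_{\nu_0}$ under the standard embedding $K' \hookrightarrow K$; the equality $D=D'$ is what allows the shifts on the $G$ and $G'$ sides to match consistently. A secondary technical point is ensuring that the prescribed interlacing region is path-connected by single-coordinate translations $\pm f_i$ that never cross a fence, so that Lemma~\ref{lem:24120608} applies uniformly in both the $\lambda$ and $\nu$ variables, once the base point has been suitably situated.
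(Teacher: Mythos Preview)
Your proposal is correct and follows essentially the same route as the paper: a base case established via the period-integral criterion (Theorem~\ref{thm:24012120}) at parameters satisfying $x_i=\xi_i+\tfrac12$, $y_j=\eta_j-\tfrac12$ (this is exactly the paper's Proposition~\ref{prop:250128}), followed by propagation using Theorem~\ref{thm:24120703}, with the final dimension pinned to $1$ by Sun--Zhu. The only difference in organization is that the paper translates in the $\lambda$-variable alone: since Proposition~\ref{prop:250128} already furnishes, for \emph{every} admissible $\nu$, a companion $\lambda_0=\lambda_0(\nu)$ inside the prescribed interlacing region, a single application of Theorem~\ref{thm:24120703} (coherent continuation on the $G$-side) suffices, and no separate translation in $\nu$ is needed. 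Your two-step scheme (translate $\lambda$, then $\nu$) is a valid alternative implementation of the same idea.
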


\begin{remark}
The interleaving pattern on $\lambda=(x,y)$ and $\nu=(\xi, \eta)$
 in Theorem \ref{thm:250126FJ} is equivalent to 
 that $[D D' +] \in {\mathfrak{P}}({\mathbb{R}}^{r+1,s}) \times {\mathfrak{P}}({\mathbb{R}}^{r,s})$ is a \emph{coherent pair}, 
 where
 $[D D' +]$ is an interleaving pattern of $(\lambda, x_{r+1}, \nu)$, defined by the inequalities $D$ for the entries of $\lambda$ and $\nu$, along with the condition that $x_{r+1}$ is smaller than any of the entries of $\lambda$ and $\nu$.
For various equivalent definitions of \lq\lq coherent pairs", 
 we refer to \cite{HKS}.
\end{remark}

\medskip
By the stability theorem for multiplicities
in symmetry breaking within fences (Theorem~\ref{thm:24120703}), 
the proof of Theorem~\ref{thm:250126FJ} reduces to the following proposition:
\begin{proposition}
\label{prop:250128}
In the setting and assumptions of Theorem \ref{thm:250126FJ}, 
 the equality \eqref{eqn:GHupq1} holds
 if $\lambda=(x,y) \in \mathbb{R}^{r+s}$ and $\nu=(\xi,\eta)  \in \mathbb{R}^{r'+s}$ satisfy following conditions:
\begin{equation}
\label{eqn:25012550}
  \begin{cases}
  x_i=\xi_i + \frac 1 2 \qquad &(1 \le i \le r), 
\\
  y_i =\eta_i-\frac 1 2 &(1 \le i \le s).  
\end{cases}
\end{equation}
%Here, we use the convention
% $\nu=(\nu_1, \dots, \nu_{r-1}, \nu_{r+1}, \dots, \nu_s)$
% when \linebreak  $r'=r-1$.  
\end{proposition}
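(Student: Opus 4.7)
The plan is to deduce Proposition~\ref{prop:250128} from the general period-integral non-vanishing result, Theorem~\ref{thm:24012120}, applied to the pair $(\Pi_\lambda, \pi_\nu) \in \operatorname{Disc}(X) \times \operatorname{Disc}(Y)$.  Under the good-range conditions $\lambda \in D_{>Q}$ and $\nu \in D'_{>Q'}$ inherited from Theorem~\ref{thm:250126FJ}, both representations are well-defined, irreducible, and have non-singular infinitesimal characters, so the setup of Theorem~\ref{thm:24012120} applies.

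The key point is to verify the highest-weight hypothesis \eqref{eqn:241226}; the multiplicity condition \eqref{eqn:minK} is then automatic by Remark~\ref{rem:K_mult_one}.  Using the explicit formulas of Section~\ref{sec:upq2} for the minimal $K$- and $K'$-types, the non-zero entries of the highest weight of $\mu(\Pi_\lambda)$ on the $U(p)$-block are $(\mu_\lambda)_i = \lambda_i + \tfrac{-p+q+1}{2} + \ell_i$, while those of $\mu'(\pi_\nu)$ on the $U(p-1)$-block are $(\mu'_\nu)_i = \nu_i + \tfrac{-p+q+2}{2} + \ell'_i$.  Under the substitution $x_i = \xi_i + \tfrac{1}{2}$ of \eqref{eqn:25012550}, together with $\ell_i = \ell'_i$ (which holds because $D = D'$), these agree; the analogous matching on the $U(q)$-blocks is produced by $y_i = \eta_i - \tfrac{1}{2}$.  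This numerical coincidence is exactly what the Gelfand--Tsetlin/Weyl branching rule for $U(p) \downarrow U(p-1)$ requires in order that a suitable highest weight vector of $\mu(\Pi_\lambda)$ -- the one adapted to the $(K \cap H)$-spherical structure of the discrete series on $X$ -- land inside the $\mu'(\pi_\nu)$-copy of $\mu(\Pi_\lambda)|_{K'}$.

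With the hypotheses of Theorem~\ref{thm:24012120} verified, one obtains the non-vanishing of the period integral \eqref{eqn:Bperiod}, and hence a non-zero symmetry breaking operator $T_B \colon \Pi_\lambda^\infty \to (\pi_\nu^\vee)^\infty$.  Passing to the contragredient (using unitarity of $\pi_\nu$, so that $\pi_\nu^\vee \simeq \overline{\pi_\nu}$ again lies in $\operatorname{Disc}(Y)$) and invoking the multiplicity-one theorem of Sun--Zhu \cite{xsunzhu} for the pair $(U(p,q), U(p-1,q))$, which bounds $\dim \invHom{G'}{\Pi_\lambda^\infty|_{G'}}{\pi_\nu^\infty}$ above by $1$, upgrades this to the equality \eqref{eqn:GHupq1}.

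The main technical obstacle is the careful verification of \eqref{eqn:241226}.  The shifts $\pm \tfrac{1}{2}$ in \eqref{eqn:25012550} are calibrated precisely to produce the numerical matching of highest-weight entries above; any deviation would disalign the branching.  The assumption $D = D'$, giving $\ell_i = \ell'_i$, is equally indispensable, as it ensures that the combinatorial data entering the two minimal $K$-type formulas are compatible.  Identifying the correct highest weight vector -- the one aligned with the $(K \cap H)$-spherical direction rather than the standard Borel highest weight -- and tracing its image under restriction to $K'$ through the Gelfand--Tsetlin rule is where the bookkeeping is most delicate.
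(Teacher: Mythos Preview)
Your proposal is correct and follows the same approach as the paper: apply Theorem~\ref{thm:24012120} to the pair of symmetric spaces $X=G/H$ and $Y=G'/H'$ (with $H'=U(r,s)\times U(p-r-1,q-s)$), verify hypothesis~\eqref{eqn:241226} via the minimal $K$-type formulas of Section~\ref{sec:upq2} under the shifts~\eqref{eqn:25012550}, and note that~\eqref{eqn:minK} is automatic by Remark~\ref{rem:K_mult_one}. Your explicit matching of the nonzero highest-weight entries (using $D=D'$ so that $\ell_i=\ell_i'$) and your invocation of Sun--Zhu to upgrade non-vanishing to the equality $\dim=1$ make explicit steps that the paper's three-sentence proof leaves to the reader.
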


\begin{proof}
We apply the non-vanishing theorem for the period integral of discrete series representations
(Theorem~\ref{thm:24012120})
to the symmetric spaces $X=G/H$ and $Y=G'/H'$, where
$G' = U(p-1,q)$ is realized as a subgroup of $G$ such that
$H' := H \cap G' \simeq U(r, s) \times U(p-r-1, q-s)$.
Condition \eqref{eqn:25012550} then ensures that the assumption \eqref{eqn:241226} 
on minimal $K$-types in Theorem~\ref{thm:24012120} is satisfied,
while \eqref{eqn:minK} is immediate.
\end{proof}

By Theorem~\ref{thm:24120703}, the result in Proposition~\ref{prop:250128} 
extends to all the parameters stated in Theorem~\ref{thm:250126FJ} via translations within the initial fences.  
Thus, the non-vanishing of symmetry breaking is established, completing the proof of Theorem~\ref{thm:250126FJ}.

\medskip
In contrast to the $GL(n,{\mathbb{R}})$ case in Section \ref{subsec:jump}, 
we note that jumping the fences is prohibited in the $U(p,q)$ case due to a different parity condition.  
\bigskip

%%%%%%%%%%%%%%%%%%%%%%%%%%%%%%%%%%%%%%%%%%%%%%%%%%%%%%%%%%%
\section{Remarks and Examples}
 \label{sec:Arthur}

In this section, we make some general remarks and illustrate our results with examples of tempered and non-tempered representations, including extensions to limits of discrete series representations.

\subsection{Geometric Observations} 
Following \cite[Def.~3.1]{xk22_JA}, we recall the generalized notions of \lq\lq{Borel subalgebras}\rq\rq\
 (\emph{relative Borel subalgebras})
 and complex Levi subalgebras for reductive symmetric spaces $G/H$ associated with involutive automorphisms $\sigma$ of $G$. 
 These notions were used to refine a generalization of Casselman's embedding theorem {\cite{xk22}} to representations with $H$-distinguished vectors.

Let $G_U$ be a maximal compact subgroup of $G_{\mathbb{C}}$, chosen so that $G_U \cap G$
 and $G_U \cap H$ are also maximal compact subgroups of $G$ and $H$, 
 respectively.  
We fix an $\operatorname{Ad}(G)$-invariant, 
 non-degenerate symmetric bilinear form
 on the Lie algebra ${\mathfrak{g}}$, 
 which is also non-degenerate on the subalgebra ${\mathfrak{h}}$.  
We write ${\mathfrak{g}}={\mathfrak{h}}+{\mathfrak{h}}^{\perp}$
 for direct sum decomposition,
 and 
$
   {\mathfrak{g}}_{\mathbb{C}}={\mathfrak{h}}_{\mathbb{C}}+{\mathfrak{h}}_{\mathbb{C}}^{\perp}
$
 for its complexification.  

 Recall that each hyperbolic element $Y \in {\mathfrak{g}}$ determines a parabolic subalgebra of ${\mathfrak{g}}$, consisting of the sum of eigenspaces of $\operatorname{ad}(Y)$ corresponding to non-negative eigenvalues.
 
\begin{dfn}[Relative Borel subalgebra for $G/H$, see \cite{xk22_JA}]
\label{def:PGH}
Let $(G,H)$ be a reductive symmetric pair.  
A {{\it{Borel subalgebra} {${\mathfrak{b}}_{G/H}$}
}}
 for $G/H$
is a parabolic subalgebra of ${\mathfrak{g}}_{\mathbb{C}}$.  
It is defined by a generic element of 
$
   {\mathfrak{h}}_{\mathbb{C}}^{\perp} \cap \sqrt{-1}{\mathfrak{g}}_U
$
 or by its conjugate under an inner automorphism of $G_{\mathbb{C}}$.  
\end{dfn}

The relative Borel subalgebra ${\mathfrak{b}}_{G/H}$ 
is not necessarily solvable, 
 and thus its Levi subalgebra ${\mathfrak{l}}_{G/H}$ 
 is not always abelian.  
We note that ${\mathfrak{b}}_{G/H}$ and ${\mathfrak{l}}_{G/H}$ are determined solely from the complexified symmetric pair $({\mathfrak{g}}_{\mathbb{C}}, {\mathfrak{h}}_{\mathbb{C}})$.

\medskip
The Levi subalgebra of the relative Borel subalgebra ${\mathfrak{b}}_{G/H}$ 
for the symmetric space 
$$
   G/H=GL(n, {\mathbb{R}})/(GL(\ell,{\mathbb{R}}) \times GL(n-\ell, {\mathbb{R}}))
$$
 is given by
\begin{equation}
\label{eqn:Levi_GH_GL}
\mathfrak{l}_{G/H}=\mathbb{C}^{2\ell} \oplus \mathfrak{gl}(n-2\ell, \mathbb{C})
\end{equation}
if $2 \ell \le n$. 

On the other hand, 
 for the group $G= U(p,q)$, 
 the symmetric spaces 
\[
U(p,q)/(U(r,s) \times U(p-r,q-s)),
\quad
\text{for $2 r \le p$ and $2s \le q$}
\]
are not isomorphic to each other for different $(r,s)$.
However, they share the same complex Levi subalgebra
 as long as $r+s$ is constant (say $=\ell$).
 The corresponding complex Levi subalgebras are also isomorphic to the complex Levi subalgebra (\ref{eqn:Levi_GH_GL}) of the symmetric space 
 \[
 GL(n,\mathbb{R})/(GL(\ell,\mathbb{R}) \times GL(n-\ell,\mathbb{R})).
 \]
In contrast, the real Levi subgroups  $L_{p,q; r, s}^U$ that appear in cohomological parabolic induction are different.
For the symmetric spaces 
 $U(p,q)/(U(r,s) \times U(p-r,q-s))$,
 the real Levi subgroup is given by
\begin{equation}
\label{eqn:L_pqrs}
 L_{p,q; r, s}^U=
 {\mathbb{T}}^{2r+2s}
 \times U(p-2r, q-2s),
\end{equation}
 whereas for the symmetric space $GL(n,{\mathbb{R}})/(GL(\ell,{\mathbb{R}}) \times GL(n-\ell,{\mathbb{R}}))$,
 the corresponding real Levi subgroup is 
 \[
L_{n;p}^{\mathbb{R}}=
 ({\mathbb{C}}^{\times})^{\ell} \times GL(n-2 \ell,{\mathbb{R}}).
 \]
See also \cite{MR23} for further examples in more details.

\medskip
\begin{remark}

\begin{enumerate}
\item The real Levi subgroups of a symmetric space are  Levi subgroups of $\theta$-stable parabolic subgroups which were used to obtain the representations in the discrete spectrum of the symmetric space via cohomological induction.
\item Observe that even in the rank one case, the non-compact symmetric spaces 
\[L^U_{p,q;1,0}/L^U_{p,q;1,0}\cap (U(1,0) \times U(p-1,q)) 
\]
 and 
 \[
 L^{\mathbb{R}}_{n,1}/L^{\mathbb{R}}_{n,1}\cap (GL(1,\mathbb{R}) \times GL(n-1,\mathbb{R}))
 \]
 of the real Levi subgroups are not isomorphic.
 On the other hand, they have the same complex Levi subalgebra given in Definition~\ref{def:PGH}.
 \item \  Observe that the complex Levi subalgebras of the symmetric spaces $U(2n,2n)/(U(n,n)\times U(n,n))$
 and $U(2n,2n)/GL(2n,\mathbb{C})$ are isomorphic.
 \end{enumerate}
 
\end{remark}

\medskip
\subsection{Arthur Packets and Discrete Series Representations for Symmetric Spaces.} 

\medskip
 We recall some results about Arthur packets and representations in the discrete spectrum for the symmetric spaces.

 \medskip

 Given a fixed $\ell $ so that $2 \ell \le n$ and non-singular integral infinitesimal character \eqref{eqn:GLinflmd},  
  C.\ Moeglin and D.\ Renard showed in \cite{MR23} that all the representations with this infinitesimal  character, 
 which are in the discrete spectrum of
$GL(n, {\mathbb{R}})/(GL(\ell,{\mathbb{R}}) \times GL(n-\ell, {\mathbb{R}}))$, are in the same Arthur packet $\mathcal{A}(\lambda)$. 
However, Arthur packets for $GL(n,{\mathbb{R}})$  contains only one representation \cite{AAM}.  
Thus 
 the irreducible unitary representation $\Pi_\ell(\lambda) $
in Proposition \ref{prop:DiscGH}, 
 satisfying  the regularity condition 
 $
   \lambda_1 > \cdots > \lambda_{\ell-1} >\lambda_{\ell} > \max(n-2\ell-1, 0),
$ 
 is the only representation in the Arthur packet with this infinitesimal character.
In contrast the unitary representations in the discrete spectrum of the symmetric spaces
 $$U(p,q)/(U(1,0) \times U(p-1,q)) \text{ and } U(p,q)/(U(0,1)\times U(p,q-1)),$$
 which have the same non-singular infinitesimal character, 
 are not isomorphic, but they are in the same Arthur packet \cite{OS25}. 
 More surprisingly for fixed non-singular integral infinitesimal character the representations in discrete spectrum of the symmetric spaces $$U(2n,2n)/(U(n,n)\times U(n,n))
 \text{ and } U(2n,2n)/GL(2n,\mathbb{C})$$ are in the same Arthur packet, although the  symmetric spaces are not isomorphic.  
Thus any generalization of the GGP conjecture to unitary symmetric spaces for $p+q \equiv 0 \mod 4$ has to take this into account.

 Using the observation that Arthur packets for $GL(n,\mathbb{R})$ contain exactly one representation, these ideas lead to the reformulation of the conclusion of Theorem \ref{thm:12} as follows.

Let $\Pi$ and $\pi$ be discrete series representations in 
\[
L^2(GL(n, {\mathbb{R}})/(GL(\ell,{\mathbb{R}}) \times GL(n-\ell, {\mathbb{R}}))),
\]
and 
\[
 L^2(GL(n-1, {\mathbb{R}})/(GL(\ell,{\mathbb{R}}) \times GL(n-\ell-1, {\mathbb{R}}))), 
\] 
respectively, where $2 \ell \le n-1$.
We also assume that they have non-singular infinitesimal characters.
 Let $\mathcal{A}_{\Pi}$ and $\mathcal{A}_\pi$ be Arthur packets, such that 
\[
   \text{$\Pi \in \mathcal{A}_{\Pi}$\quad and \quad$\pi \in \mathcal{A}_{\pi}$.}  
 \]
We can summarize our discussion as follows.

\begin{corollary}
Under the above assumptions 
 we have:
\[  \operatorname{Hom}_{G'}(\Pi|_{G'},\pi)
      =\mathbb{C} 
\]
for all pairs of representations $\Pi \in \mathcal{A}_{\Pi}$ and $\pi \in \mathcal{A}_{\pi}$.  
\end{corollary}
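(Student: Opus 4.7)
The plan is to reduce the Corollary to Theorem~\ref{thm:12} by unpacking the two pieces of input: the parametrization of discrete series of the symmetric spaces via Proposition~\ref{prop:DiscGH}, and the fact that Arthur packets for $GL(n,\mathbb{R})$ are singletons.

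First, I would invoke the Moeglin--Renard parametrization together with the Adams--Arancibia--M\oe glin singleton property to conclude that $\mathcal{A}_{\Pi}=\{\Pi\}$ and $\mathcal{A}_{\pi}=\{\pi\}$. Thus the \lq\lq for all pairs" quantifier in the conclusion is vacuous: it suffices to prove $\operatorname{Hom}_{G'}(\Pi|_{G'},\pi)=\mathbb{C}$ for the given $\Pi$ and $\pi$ themselves.

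Next I would apply Proposition~\ref{prop:DiscGH} to both symmetric spaces. It identifies the discrete series with non-singular infinitesimal character as $\Pi_\ell(\lambda)$ for some $\lambda=(\lambda_1,\dots,\lambda_\ell)\in(2\mathbb{Z}+1)^\ell$ with $\lambda_1>\cdots>\lambda_\ell>0$ on the $G$-side, and $\pi_\ell(\nu)$ with analogous $\nu\in(2\mathbb{Z}+1)^\ell$ on the $G'$-side; note that here the parameter $\ell=\min(p,n-p)=\min(p,n-1-p)$ is consistent for both groups under the assumption $2\ell\le n-1$. The non-singularity of the infinitesimal character forces $\lambda_\ell>n-2\ell-1$ via \eqref{eqn:good}--\eqref{eqn:GLinflmd}, and likewise $\nu_\ell>n-2\ell-1$ on the smaller side (using $n-1$ in place of $n$ and the fact that $2\ell\le n-1$ so that $(n-1)-2\ell-1\le n-2\ell-1$; one checks that the induced regularity condition for $\nu$ matches what Theorem~\ref{thm:12} demands).

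Now I would plug these parameters into Theorem~\ref{thm:12} to obtain
\[
\dim\operatorname{Hom}_{G'}(\Pi_\ell(\lambda)^\infty|_{G'},\pi_\ell(\nu)^\infty)=1,
\]
which gives the desired $\operatorname{Hom}_{G'}(\Pi|_{G'},\pi)=\mathbb{C}$. The steps here are essentially a bookkeeping exercise once the key inputs are in place; the only point requiring care is verifying that the non-singularity of the infinitesimal character on each side really translates into the strict inequality $\lambda_\ell,\nu_\ell>n-2\ell-1$ rather than a weaker condition, so that the hypotheses of Theorem~\ref{thm:12} are met. I expect this matching of regularity ranges --- making sure the \lq\lq good range" condition on the $G$-side and the corresponding condition on the $G'$-side are both strong enough --- to be the only substantive obstacle; everything else is a direct quotation of previously established results.
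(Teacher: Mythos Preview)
Your proposal is correct and follows precisely the paper's own approach: the Corollary is presented in the paper as a direct reformulation of Theorem~\ref{thm:12}, using the Moeglin--Renard result together with the fact (from \cite{AAM}) that Arthur packets for $GL(n,\mathbb{R})$ are singletons, so that $\mathcal{A}_\Pi=\{\Pi\}$ and $\mathcal{A}_\pi=\{\pi\}$, and then identifying $\Pi=\Pi_\ell(\lambda)$ and $\pi=\pi_\ell(\nu)$ via Proposition~\ref{prop:DiscGH}. The paper gives no separate proof beyond the sentence ``We can summarize our discussion as follows,'' and your sketch fills in exactly the bookkeeping the paper leaves implicit; you are also right to flag the matching of the non-singularity hypothesis with the good-range inequalities $\lambda_\ell,\nu_\ell>n-2\ell-1$ required by Theorem~\ref{thm:12} as the one point needing a parity check.
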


\bigskip
\subsection{\lq\lq{Operations on the Unitary Dual}\rq\rq.} 

In the article \cite{Ven05}, 
 A.\ Venkatesh  discusses the restriction of representations 
 $\Pi_{\ell}(\lambda )$ of $GL(n,\mathbb{R})$
 to a subgroup $GL(n-1,\mathbb{R})$ embedded in the upper left corner as the stabilizer of the last coordinate vector.
More generally in this paper, he discusses for $GL(n)$, the effect on the unitary dual of the following operations: restriction to a Levi subgroup, induction
from Levi subgroups and tensor products. 
 Without explicitly computing symmetry breaking operators or referring to symmetric spaces, using only the Mackey machine, 
 A.\ Venkatesh  considers representations induced from a representation of $GL(p,\mathbb{R}) \times GL(n-p,\mathbb{R})$ to $GL(n,\mathbb{R})$ which is tempered on $GL(p,\mathbb{R})$ and trivial on $GL(n-p,\mathbb{R})$ and their restriction to the subgroup $GL(n-1,\mathbb{R})$ proving conjectures by L.\ Clozel about the automorphic support of the restriction.
We quote from the abstract of the article by L.\ Clozel \cite{Clo}:
\lq\lq{The Burger--Sarnak principle states that the restriction to a reductive subgroup of an automorphic representation of a reductive group has automorphic support. 
Arthur's conjectures parametrize automorphic representations by means of the (Langlands) dual group. 
Taken together, these principles, combined with some new arguments, imply that unipotent orbits in a Langlands dual behave functorially with respect to arbitrary morphisms $H \to G$
 of semisimple groups. 
The existence of this functoriality is proven for $SL(n)$, and combinatorial descriptions of it (due to Kazhdan, Venkatesh, and Waldspurger)
 are proposed.\rq \rq  
 
\medskip
In this article, we used different techniques to analyze the restriction of a family of non-tempered unitary representations of $GL(n,\mathbb{R})$ to $GL(n-1, \mathbb{R})$ and proved the existence of non-trivial SBOs.
We also provided a proof for symmetry breaking for some tempered representations.

\medskip
\subsection{A GGP Theorem for the Symmetric Spaces \linebreak  $GL(n,\mathbb{R})/GL(p,\mathbb{R})\times GL(n-p,\mathbb{R})$ and $U(p,q)/U(r,s)\times U(p-r,q-s)$? }

Around 1992, B.\ Gross and D.\ Prasad published conjectures concerning the restriction of discrete series representations of orthogonal groups to smaller orthogonal groups \cite{GP92}. 
These have been generalized to unitary groups and have been proven by H.\ He \cite{He} for individual discrete series representations.

%%%%%%%%%%%%%%%%%%%%%
 
These ideas can be generalized in two directions: 
\begin{itemize}
    \item understand the symmetry breaking of discrete series of symmetric spaces and discrete series of subspaces,
    \item  understand the symmetry breaking  of representations in Arthur packets of groups and of subgroups.
\end{itemize}

Discrete series representations of a symmetric space $G/H$ are generally not tempered representations.
See \cite{BK21} for the classification of $G/H$ such that the regular representation on $L^2(G/H)$ is non-tempered. In \cite{MR23} D.~Renard and C.~Moeglin examine the relationship between Arthur packets and discrete series representations of symmetric spaces of classical groups.
The representations in the discrete spectrum of symmetric spaces are members of an Arthur packet \cite{MR23}. 
Not all members of such an Arthur packet are discrete series representations of a symmetric space, and an Arthur packet may contain discrete series representations of several symmetric spaces \cite{MR23}.  
Generalizing the GGP conjectures to symmetric spaces involves generalizing them to the subset of representations in a given Arthur packet which are discrete series of a symmetric space.  
Moeglin and Renard showed for real classical groups that if a representation in an Arthur packet is in the discrete spectrum of a symmetric space, 
then another representation in the same packet is either in the discrete spectrum of no symmetric space or in the discrete spectrum of a unique symmetric space \cite{MR23}.  
On the other hand the results in \cite{OS25} suggest that it may be possible to generalize the GGP conjectures to discrete series representations of symmetric spaces or Stiefel manifolds.

\medskip

\subsection{Examples That Illustrate Our Results} 
\subsubsection{Branching of Limit of Discrete Series}

By Theorems~\ref{thm:23081404}  and~\ref{thm:24102601} we can deduce non-vanishing results concerning symmetry breaking for some limits of discrete series representations
from those for discrete series representations.
As an illustration, we consider the pair
\[
(G,G')=(U(2,1), U(1,1)).
\]
We use the conventions from Sections \ref{sec:upq} and \ref{sec:upq2}.

The group $G=U(2,1)$ has three families of discrete series representations. 
They are parametrized by the Harish-Chandra parameters $(x_1,x_2,y) \in (\mathbb{Z})^{2}_{>}\times \mathbb{Z}$:
\begin{equation}
\label{eqn:u213}
x_1>x_2>y,  \qquad  x_1>y>x_2,  \qquad y>x_1>x_2,
\end{equation}
which we also denote symbolically, following \cite{He}, by
\[\ 
+ +  -,  \qquad  + - +,  \qquad - + +.
\]

The group $G'=U(1,1)$ has two families of discrete series representations.
They are  parametrized by  Harish-Chandra parameters $(\xi, \eta) \in (\mathbb{Z}+\tfrac{1}{2})^2$:
\[ \xi > \eta, \qquad \textrm{and} \qquad \eta>\xi, \] 
which are denoted symbolically by
$\op \oi$ and $\oi \op$, respectively.

The $\tau$-invariants (Definition~\ref{def:tau_inv}) of the representations $\Pi_{(x_1, x_2, y)}$, $\Pi_{(x_1,y, x_2)}$, $\Pi_{(y, x_1, x_2)}$ of $G=U(2,1)$ corresponding to \eqref{eqn:u213} are $e_1-e_2$, none, and $e_2-e_3$, respectively.
Applying the vanishing theorem
(Theorem~\ref{thm:25080811}),
we obtain that 
\[
\operatorname{Hom}_{G'}\big(\Pi_{(x_1,x_2,y)}|_{G'}, \pi_{(\xi,\eta)}\big)= 0
\]
whenever, for instance, $x_1 > x_2> \xi > y>\eta$
or
$\xi>x_1 > x_2 > y>\eta$.

By Theorem~\ref{thm:25080811}),
$\operatorname{Hom}_{G'}(\Pi_{(x_1,x_2,y)}|_{G'}, \pi_{(\xi,\eta)})$ vanishes
in a total of 24 interleaving patterns.

An explicit condition on
\[(x_1, x_2, y; \xi, \eta) \in \mathbb Z^3 \times \left(\mathbb Z+\tfrac{1}{2}\right)^2
\]
such that 
\[
\operatorname{Hom}_{G'}\big(\Pi_{(x_1,x_2,y)}|_{G'}, \pi_{(\xi,\eta)}\big)\neq 0
\]
is given in the left column of Table~\ref{tab:U21},
as shown by H.~He \cite{He}.
These conditions are described by
\emph{interleaving patterns}, for which an intrinsic explanation was provided in Theorem~\ref{thm:24102706} from a different perspective.

\begin{table}[H]
\begin{center}
\begin{tabular}{c|c|c|c|c}
\text{Case}
&\text{interleaving}
&$\delta$
&$\delta'$
&\text{limit of discrete series}
\\
\hline
\text{I}
&
$x_1 > \xi > x_2 > y>\eta$
&$++-$
&$\op\oi$
&$ x_1>\xi>x_2 = y>\eta$
\\[.3em]
\text{II}
&
$x_1>\xi>\eta>y>x_2$
&$+-+$
&$\op\oi$
&$x_1>\xi>\eta>y=x_2$
\\[.3em]
\text{III}
&
$x_1>y>x_2>\xi>\eta$
&$+-+$
&$\op\oi$
&$x_1=y>x_2>\xi>\eta$
\\[.3em]
&
&
&
&$x_1>y=x_2>\xi>\eta$
\\[.3em]
\text{IV}
&
$x_1>y>\eta>\xi>x_2$
&$+-+$
&$\oi\op$
&$x_1=y>\eta>\xi>x_2$
\\[.3em]
\text{V}
&
$\eta>\xi>x_1>y>x_2$
&$+-+$
&$\oi\op$
&$\eta>\xi>x_1=y>x_2$
\\[.3em]
&
&
&%$\oi\op$
&$\eta>\xi>x_1>y=x_2$
\\[.3em]
\text{VI}
&
$\eta>y>x_1>\xi>x_2$
&$-++$
&$\oi\op$
&$\eta>y=x_1>\xi>x_2$
\\
\end{tabular}
\end{center}
\caption{Interleaving patterns for $(U(2,1), U(1,1))$}
\label{tab:U21}
\end{table}

For instance, fix $\xi > \eta$, and consider discrete series representation $\pi_{(\xi,\eta)} $ of $G'=U(1,1)$.
 The interleaving patterns corresponding to discrete series representations $\Pi_{(x_1, x_2, y)} $ of $G=U(2,1)$ satisfying 
\[
\operatorname{Hom}_{G'}(\Pi_{(x_1, x_2, y)}|_{G'},\pi_{(\xi, \eta)}) \ne 0
\]
are Cases~I, II, and~III. 
Among these three, only interleaving pattern~II is \emph{coherent} in the sense of \cite[Def.~4.5]{HKS}.

The nondegenerate limits of discrete series representations corresponding to \eqref{eqn:u213} 
are given by 
$x_2=y$, $x_1=y$ or $y=x_2$, and $y=x_1$, respectively. 
They occur as a direct summand of certain reducible principal series representations.
Applying once again Theorems~\ref{thm:23081404} and~\ref{thm:24102601} on translation functors for symmetry breaking,
we deduce that there exists a nonzero symmetry breaking operator between the limit of a generic discrete series representation and a discrete series representation of $U(1,1)$.
This application to limits of discrete series representations is  new.
For example, in the interleaving pattern~III, the cases $x_1=y$ or $x_2=y$ are allowed.
A similar argument applies to the other interleaving patterns~I–VI.
These conditions are summarized in the right column of Table~\ref{tab:U21}.

\subsubsection{Tempered Case ($q=2s$)}
We now illustrate Theorem~\ref{thm:250126FJ} 
with an example
\[
(G, G')=(U(3,2), U(2,2))
\]
with 
\[(H,H')=(U(1,1)\times U(2,1), U(1,1)\times U(1,1)).
\]
%Theorem~\ref{thm:250126FJ} 
There are, up to conjugation, two $\theta$-stable parabolic subalgebras of 
$\mathfrak{g}_\mathbb{C} = \mathfrak{gl}(5, \mathbb{C})$ that describe discrete series representations for $G/H$. 
They are parametrized, as in Lemma~\ref{lem:250129}, 
by interleaving patterns 
$D \in {\mathfrak{P}}({\mathbb{R}}^{r,s})$ in ${\mathbb{R}}_{>}^r \times {\mathbb{R}}_{>}^s$, 
or equivalently by the data $\kappa = \{(r_j), (s_j), M\}$ with $1 \le M \le \min(r,s)$, 
where $(r,s) = (1,1)$:
\[
\begin{array}{l l l}
\text{Case 1:} & D^{(1)} = \{ x > y > 0 \}, & \text{that is, } \kappa^{(1)} = \{(1),(1),1\}, \\
\text{Case 2:} & D^{(2)} = \{ y > x > 0 \}, & \text{that is, } \kappa^{(2)} = \{(0),(1),1\}.
\end{array}
\]
Accordingly, the set of discrete series for $G/H$ is given by
\[
\operatorname{Disc}(G/H) = \{\Pi_{(x,y)} : (x,y) \in \mathbb{Z}^2 \cap (D^{(1)} \cup D^{(2)}) \}.
\]
The representation $\Pi_{(x,y)}$ has a 
$\mathfrak{Z}(\mathfrak{g}_\mathbb{C})$-infinitesimal character 
\[(x,0,-x,y,-y) \in \mathbb C^5/\mathfrak S_5,
\]
while its minimal $K$-type has highest weight

\[
\mu_\lambda =
\begin{cases}
(x, 0, -x, y, -y), & \text{in Case 1}, \\
(x-1, 0, 1-x, y+1, -y-1), & \text{in Case 2}.
\end{cases}
\]

Similarly, up to conjugation, there are two $\theta$-stable parabolic subalgebras of 
$\mathfrak{g}'_\mathbb{C} = \mathfrak{gl}(4, \mathbb{C})$ 
that describe discrete series representations for $G/H$. 
They are parametrized as
\[
\begin{array}{l l l}
\text{Case 1:} & D'^{(1)} = \{ \xi > \eta > 0 \}, & \text{that is, } \kappa'^{(1)} = \{(1),(1),1\}, \\
\text{Case 2:} & D'^{(2)} = \{ \eta > \xi > 0 \}, & \text{that is, } \kappa'^{(2)} = \{(0),(1),1\}.
\end{array}
\]
Thus, the set of discrete series for $G'/H'$ is given by
\[
\operatorname{Disc}(G'/H') = \{\pi_{(\xi,\eta)} : (\xi,\eta) \in (\mathbb{Z}+\tfrac{1}{2})^2 \cap (D'^{(1)} \cup D'^{(2)}) \}.
\]

The representation $\pi_{(\xi,\eta)}$  has a 
$\mathfrak{Z}(\mathfrak{g}'_\mathbb{C})$-infinitesimal character: 
\[(\xi,-\xi,\eta,-\eta) \in \mathbb C^4/\mathfrak S_4,
\]
while its minimal $K$-type has highest weight
\[
\mu_\lambda =
\begin{cases}
\begin{aligned}
& \xi + \tfrac{1}{2}, & & -\xi - \tfrac{1}{2}, & & \eta - \tfrac{1}{2}, & & \tfrac{1}{2} - \eta 
&& \text{(Case 1)}, \\
& \xi - \tfrac{1}{2}, & & \tfrac{1}{2} - \xi, & & \eta + \tfrac{1}{2}, & & -\eta - \tfrac{1}{2} 
&& \text{(Case 2)}.
\end{aligned}
\end{cases}
\]

By the non-vanishing theorem of period integrals (Theorem~\ref{thm:24012120}), 
\[
\operatorname{Hom}_{G'}(\Pi_{(x,y)}|_{G'}, \pi_{(\xi, \eta)}) \neq 0
\]
if
\[
\begin{cases}
\begin{aligned}
x &= \xi + \tfrac{1}{2}, & y &= \eta - \tfrac{1}{2} & \text{(Case 1)}, \\
x - 1 &= \xi + \tfrac{1}{2}, & y + 1 &= \eta + \tfrac{1}{2} & \text{(Case 2)}.
\end{aligned}
\end{cases}
\]

Thus, Theorem~\ref{thm:24120703} guarantees
\[
\begin{cases}
x > \xi > \eta > y > 0 & \text{(Case 1)}, \\
\eta > y > x > \xi > 0 & \text{(Case 2)}.
\end{cases}
\]

These are the ingredients of Theorem~\ref{thm:250126FJ}.

\begin{remark}
In this specific example, all the discrete series representations are Harish-Chandra's discrete series. 
The signatures of the interleaving patterns are 
\[
+ \op \oi - + - \oi \op + \quad \text{(Case 1)}, \quad 
\oi - + \op + \op + - \oi \quad \text{(Case 2)};
\]
none of them forms a \emph{coherent pair} in the sense of \cite{HKS}. See \cite[Def.\ 3.1]{HKS} for the definition of a \emph{signature} of a pair of representations.
\end{remark}
More generally, when $q=2s$, discrete series representations for $G/H=U(p,q)/(U(r,s) \times U(p-2r, q-2s))$ are tempered for generic parameters. It is worth noting that the non-vanishing results obtained in Theorem~\ref{thm:250126FJ} hold even when the interleaving patterns do not form a coherent pair in the sense of \cite[Def.~4.5]{HKS}.

\subsubsection{Non-Tempered Case}
\medskip \noindent
Let 
\[
(G, H) = (U(p,q), U(p-1,q)).
\]
We begin with the rank-one symmetric spaces $G/H$ and $G'/H'$, where 
\[
 (H, H') = (U(1) \times U(p-1,q), U(1) \times U(p-2,q)). 
\]
Then the real Levi subgroups for the symmetric spaces $G/H$ and $G'/H'$, as given in \eqref{eqn:L_pqrs}, are
\[
 L_{p,q; 1, 0}^U= {\mathbb{T}}^{2} \times U(p-2, q),
 \quad
 L_{p-1,q; 1, 0}^U= {\mathbb{T}}^{2} \times U(p-3, q),
 \]
 respectively.
In this rank-one setting, there is only one $\theta$-stable parabolic subalgebra up to conjugation.
For $x \in \mathbb{N}+\frac{p+q-1}{2}$
and $\xi\in \mathbb{N}+\frac{p+q-2}{2}$,
 Theorem~\ref{thm:250126FJ} guarantees that
 \[
\operatorname{Hom}_{G'}(\Pi_{x}|_{G'}, \pi_{\xi}) \neq 0
\]
whenever $x>\xi (>0)$.
Note that $\Pi_x$ is non-tempered if $p \ge 3$.
This rank-one case is also discussed in \cite{OS25}.

Moreover, the stability theorem of multiplicities 
within \emph{fences} (Theorem~\ref{thm:24120703})  allows us to extend the non-vanishing result from $\Pi_{x}$ to the representations obtained by cohomological induction from a character $(x_1, x_2)$ of  $L_{p,q; 1, 0}^U$, as long as $x_1, -x_2 > \xi$.
(The representation $\Pi_x$ corresponds to the case $x_1=-x_2=x$.)
These representations do not appear as discrete series representations for the symmetric space $G/H$ when $x_1+x_2 \neq 0$, but rather as discrete series for the indefinite Stiefel manifolds $U(p,q)/U(p-1,q)$ which are not symmetric spaces;
see the case $r = s = 1$ in \cite[Chap.~2, Sect\.~3]{Kobayashi92} for further details.

\medskip
\noindent
We now consider the higher-rank, non-tempered case where 
\[
(H, H')
=(U(r,s) \times U(p-r, q-s),  U(r,s) \times U(p-1-r, q-s)),
\]
with $2r \le p-1$ and $2s \le q$.
In this setting, we have 
\[
\operatorname{rank}G/H = \operatorname{rank}G'/H' = r+s.
\]
There are $\binom{r+s}{r}$ $\theta$-stable parabolic subalgebras with real Levi subgroup
\[
L_{p,q; r, s}^U= {\mathbb{T}}^{2r+2s} \times U(p-2r, q-2s),
\]
(see \eqref{eqn:L_pqrs}) up to conjugation.
The corresponding cohomologically induced representations
are non-tempered when $q>2s$, yet Theorem~\ref{thm:250126FJ} still guarantees non-vanishing of the corresponding symmetry breaking operators.

\medskip
\noindent

\bigskip


\begin{thebibliography}{00}
\bibitem{AAM}
J.\ Adams, N.\ Arancibia, Robert P.\ Mezo, 
{\it{Equivalent Definitions of Arthur Packets for Real Classical Groups, }}
Memoirs of the Amer.\ Math.\ Soc.\ {\bf 1503}, (2024).  

\bibitem{BK21}
Y.\ Benoist, T.\ Kobayashi, 
{\it{\textit{Tempered homogeneous spaces} III,}}
 J.\ Lie Theory {\bf{31}} (2021), 833--869.  

\bibitem{Clo}
L.\ Clozel, {\it{ Combinatorial consequences of Arthur's conjectures and the Burger-Sarnak method, }} 
Internat.\ Math.\ Res.\ Notices, Volume 2004, Issue 11, (2004),  511–523.

\bibitem{FJ80}
M.\ Flensted-Jensen, 
{\it{Discrete series for semisimple symmetric spaces, }}
Ann.\ of Math. {\bf{111}} (1980) 253--311.  

\bibitem{GGP}  
W.-T. Gan, B.\ Gross, D.\ 
{\textit{Prasad, Symplectic local root numbers, central critical $L$-values,
and restriction problems in the representation theory of classical groups}}, 
{\it{ Sur les conjectures de Gross et Prasad. I.}}, 
{ Ast\'erisque}, {\bf 346}, 1--109 (2012).

\bibitem{GP92}
B.\ Gross, D.\ Prasad, 
 {\it{\textit{On the decomposition of a restriction of $SO_n$ when restricted
 to $SO_{n-1}$}}}, 
Canad.\ J.\ Math., 
 {\bf{44}} (1992), 974--1002.  

\bibitem{HC66}
Harish-Chandra, 
{\it{Discrete series for semisimple Lie groups 
I, II,}} 
Acta Math.\ {\bf{113}} (1965), 241--318;
{\bf{116}} (1966), 1-111.  

\bibitem{HKS}
M.\ Harris, T.\ Kobayashi, B.\ Speh.  
{\it {Translation functors, branching problems, and applications to the restriction of coherent cohomology of Shimura varieties}}. Preprint, 100 pages, arXiv:2509.17007.

\bibitem{HL}
M.\ Harris,  J.-S.\ Li. {\it{A Lefschetz property for subvarieties of Shimura varieties}}, J.\ Alg.\ Geom.,{\bf{7}},  (1998), 77--122.


\bibitem{He}
H.\ He, 
{\it {On the Gan--Gross--Prasad conjecture for $U(p,q)$}}, 
 Invent.\ Math., 
 {\bf{209}} (2017) 837--884.  

\bibitem{Kobayashi92}
T. Kobayashi,
{\it{Singular Unitary Representations and Discrete Series for Indefinite Stiefel Manifolds $U(p,q;\mathbb{F})/U(p-m,q;\mathbb{F})$}}, 
Mem. Amer. Math. Soc. (1992), \textbf{95} 
{no. 462}, vi+106.  
  

\bibitem{xkpja93}
T. Kobayashi, 
{\textit{The restriction of $A_{\mathfrak{q}}(\lambda)$
            to reductive subgroups}},
           Proc. Japan Acad. Ser.\ A Math.\ Sci.,
           {\textbf{69}}
           (1993), no.7, 
{262--267}.  

\bibitem
{xkdecomp}
   {\sc {T.~Kobayashi}}, 
   {\it{Discrete decomposability of the restriction of
             $A_{\mathfrak {q}}(\lambda)$
            with respect to reductive subgroups and its applications}}, 
    Invent. Math.,
    {\bf{117}}
    (1994),
    181--205.  

\bibitem{xK98}
T.\ Kobayashi, 
\textit{Discrete series representations for the orbit spaces arising from two involutions of real reductive Lie groups}, 
J.\ Funct.\ Anal.
{\bf{152}}
(1998), no. 1, 100--135.  

\bibitem{xkProg2007}
\textit{Multiplicity-free theorems of the restrictions of unitary highest weight modules with respect to reductive symmetric pairs},
%Representation Theory and Automorphic Forms,
Progress in Math. {\bf{255}}, Birkhäuser, 2007, 45--109. 

\bibitem{xkProg2014}
T. Kobayashi,
\textit{Shintani functions, real spherical manifolds, and symmetry breaking operators}, 
Dev.\ Math., 
{\bf{37}} 
(2014), 
{127--159}.  
Springer.



\bibitem{xk21}
T.\ Kobayashi, 
{\it{Branching laws of unitary representations associated to minimal elliptic orbits
 for indefinite orthogonal group $O(p,q)$}}, 
Adv.\ Math. {\bf{388}} (2021), 
Paper No. 107862, 
38 pages.  
\bibitem{xk22_JA}
T.\ Kobayashi,
{\it {Multiplicity in restricting small representations}},
Proc.~Japan Acad.\ Ser. A, {\bf{98}} (2022), 19-24. 
\bibitem{xk22}
T.\ Kobayashi, 
{\it{Bounded multiplicity theorems for induction 
 and restriction}}, 
 J.\ Lie Theory {\bf{32}} (2022), 197--238.  



\bibitem{xksbonvecI}
T.~Kobayashi, B.~Speh, 
{\it{Symmetry Breaking for Representations of Rank One Orthogonal Groups 
 I,}} 
Memoirs of Amer.\ Math.\ Soc., {\bf{238}}
no. 1126, vi$+$112 pages, Amer.\ Math.\ Soc. 2015. 

\bibitem{xksbonvec}
T.~Kobayashi, B.~Speh, 
{\it{Symmetry Breaking for Representations of Rank One Orthogonal Groups 
II,}} 
Lecture Notes in Math., {\bf{2234}} 
Springer, 2018.  
xv$+$342 pages.  

\bibitem{xks25}
T.\ Kobayashi, B.\ Speh, 
{\textit{Period integrals for symmetric spaces}}, 
in preparation.  

\bibitem{xks25b}
T.\ Kobayashi, B.\ Speh, 
{\textit{Restriction of certain unitary representations
 of $GL_{2n}$ to $GL_{2n-1}$}}, 
in preparation.  

\bibitem{MO84}
T.\ Matsuki, T.\ Oshima, 
{\textit{A description of discrete series for semisimple symmetric spaces,}}
Adv.\ Stud.\ Pure Math. {\bf{4}} (1984), 331--390.  

\bibitem{MR23} 
C.\ Moeglin, D.\ Renard, 
\textit{S{\'e}ries discr{\`e}tes des espaces sym{\'e}triques
 et paquets d'Arthur}, 
 Rad Hrvat.\ Akad.\ Znan.\ Umjet.\ Mat.\ Znan.\   {\bf{28(558)}} (2024), 353–403.
%arXiv:1906.00725.  

\bibitem{OS21} B.\ {\O}rsted, B.\ Speh, 
\textit{Branching laws for discrete series of some affine symmetric spaces}, 
Pure Appl.\ Math.\ Q. \textbf{17}, (2021), 
no. 4, 1291--1320.  
%available at arXiv:1907.07544.  

\bibitem{OS25} B.\ {\O}rsted, B.\ Speh, 
\textit{Toward Gan--Gross--Prasad type conjecture for discrete series representations of symmetric spaces}, 
In: Symmetry in Geometry and Analysis, Volume 2, 
Festschrift of Toshiyuki Kobayashi, 
Progr.\ Math.\ \textbf{358}, (2025), 457--496.  

\bibitem{SV}  
Y.~Sakellaridis, A.~Venkatesh, 
\textit{ Periods and Harmonic Analysis on Spherical Varieties}, 
Ast\'erisque \textbf{396}, 
(2017).

\bibitem{Sp}
B.\ Speh, 
{\textit{Unitary representations of $SL(n,{\mathbb{R}})$ and the cohomology of congruence subgroups}, }
 In: non-commutative Harmonic Analysis and Lie Groups, 
 Lect.\ Notes Math., 483--505, Springer 1981.  

\bibitem{xsunzhu}
B.~Sun, C.-B.~Zhu,
\textit{Multiplicity one theorems:
the Archimedean case},
Ann. of Math. (2), 
{\bf{175}} 
(2012), 
23--44.  

\bibitem{Va01}
J.\ Vargas, 
{\textit{Restrictions of some discrete series representations}}, 
Algebras, Groups and Geometries.  {\bf{18}} (2001), no. 1, 85–99.



\bibitem{Ven05}
A.\ Venkatesh, 
{\textit{The Burger--Sarnak method and operations 
 on the unitary dual of $GL(n)$}}, 
Representation Theory {\bf{9}} (2005), 
268--286.  


\bibitem{Vogan84}
D.\ A.\ Vogan, Jr., 
\textit{Unitarizability of certain series of representations}, 
Ann. of Math. (2), {\bf{120}} (1984), no.1, 141--187.  

\bibitem{Vo86}
D.\ A.\ Vogan, 
{\textit{The unitary dual of $GL(n)$ over an archimedean field}}, 
Inv.\ Math. {\bf{83}} (1986), 449--505.  

\bibitem{WaI}
N. R. Wallach,
\emph{Real Reductive Groups. I, II.} 
Pure Appl.\ Math. 
{\bf{132}} 
 Academic Press, Inc., Boston, MA, 1988;
{\bf{132}}-II, ibid, 1992. 

\bibitem{Weyl97}
H.~Weyl, 
\emph{The Classical Groups.
Their Invariants and Representations.} 
Princeton Landmarks in Math.\ 
Princeton University Press, Princeton, 1997 %.
(a reprint of the second edition (1946)).

\end{thebibliography}
\end{document}